\documentclass[12pt]{amsart}

\author{Paul \textsc{Poncet}}
\address{CMAP, \'{E}cole Polytechnique, Route de Saclay, 91128 Palaiseau Cedex, France} 

\email{poncet@cmap.polytechnique.fr}

\usepackage[colorinlistoftodos,bordercolor=orange,backgroundcolor=orange!20,linecolor=orange,textsize=scriptsize]{todonotes}
\usepackage{stmaryrd}
\usepackage{amssymb}
\usepackage{amsmath}
\usepackage{graphicx}
\usepackage{t1enc}
\usepackage{color}
\usepackage[latin1]{inputenc}

\usepackage{calrsfs} 

\usepackage{times} 
\usepackage{bbm}
\usepackage{ifpdf}
\usepackage{tikz}
\usepackage{tikz-cd}
\usetikzlibrary{arrows}

\newcommand{\cis}{\Bbbk} 

\newcommand{\plus}{+}
\newcommand{\Plus}{\sum}

\DeclareMathOperator*{\val}{val}
\DeclareMathOperator*{\supp}{supp}
\DeclareMathOperator*{\divi}{div}

\newtheorem*{theorem*}{Theorem}
\newtheorem*{corollary*}{Corollary}

\newtheorem{theorem}{Theorem}[section]
\newtheorem{corollary}[theorem]{Corollary}
\newtheorem{proposition}[theorem]{Proposition}
\newtheorem{lemma}[theorem]{Lemma}

\theoremstyle{definition}
\newtheorem{definition}[theorem]{Definition}
\newtheorem{example}[theorem]{Example}

\newtheorem{remark}[theorem]{Remark}

\newtheorem*{acknowledgements*}{Acknowledgements}

\begin{document}

\title{Polynomials over idempotent semifields}

\date{\today}

\subjclass[2020]{06F05, 
                 12K10, 
                 16Y60} 

\keywords{semirings, semifields, idempotent semifields, tropical algebra, tropical polynomials, maxpolynomials}

\begin{abstract}
We study univariate polynomials with coefficients in an idempotent semifield and their factorization. 
We do not assume the idempotent semifield under consideration to be totally ordered, in contrast with most of the existing work on this topic. 
We notably determine when a polynomial splits into linear factors, and when its associated polynomial function does so. 
These results lead us to characterize algebraically closed idempotent semifields---those in which every polynomial function splits.  
We prove in particular that every complete idempotent semifield is algebraically closed.
We also relate algebraic closedness to the properties of preradicability and radicability and to the existence of solutions to polynomial equations or inequalities. 
\end{abstract}

\maketitle

\section{Introduction}

\subsection{General context and motivations}

Polynomials with coefficients in a commutative ring $A$ can have a non-unique factorization. 
For instance, given $A = \mathbb{Z}/n^2 \mathbb{Z}$, where $n$ is a natural number $> 1$, we have the identity  
\[
(X + n)^2 = X(X + 2n).   
\]
For polynomials with coefficients in an idempotent semifield $\cis$, one encounters the same situation. 
A semifield is a semiring $(\cis, \plus, 0, \times, 1)$ in which every non-zero element is invertible; it is an idempotent semifield if moreover $x + x = x$ for all $x \in \cis$. 
In an idempotent semifield $\cis$ we have
\[
(X \plus a)(X \plus b) = (X \plus a \vee b)(X \plus a \wedge b), 
\]
where $a \vee b := a + b$ is the sup of $\{a, b \}$ and $a \wedge b$ is the inf of $\{ a, b \}$. 
If $\cis$ is the \textit{max-plus algebra} (also called \textit{tropical semifield}), i.e.\ the idempotent semifield 
\[
\mathbb{R}_{\max} = (\mathbb{R} \cup \{-\infty \}, \max, -\infty, +, 0), 
\]
or more generally if $\cis$ is totally ordered, then $\{ a, b \} = \{ a \vee b, a \wedge b \}$, and the factorization is unique. 
However, this is not the case in general. 

In the setting of totally ordered idempotent semifields, such as the max-plus algebra, a rich factorization theory has been developed, with major contributions from Cuninghame-Green and Meijer \cite{Cuninghame80}, Baccelli et al.\ \cite{Baccelli92}, Akian, Bapat, and Gaubert \cite{Akian04}, Butkovic \cite[Section~5]{Butkovic10}, Castella \cite{Castella10, Castella13}, Hong and Sendra \cite{Hong18}. 
A striking feature of this theory is that many structural properties of polynomials---such as factorization into linear factors and the existence of roots---can be described in terms of order-theoretic and convexity properties. 
However, the assumption of total order is restrictive and is not preserved under natural constructions such as Cartesian products or quotients.

The purpose of this paper is to develop a theory of polynomial factorization over idempotent semifields without assuming total order.
In that regard, this work can be seen as a continuation of Rump \cite{Rump15}, \cite{Rump16}. 
In this general setting, several difficulties arise compared to polynomials over a field with characteristic $0$ or a totally ordered idempotent semifield, in addition to the loss of unique factorization. 
Most notably, the natural map that associates to a polynomial its polynomial function is no longer injective, so that formal polynomials cannot be identified with their associated functions. 
This leads to the coexistence of several distinct notions of roots and algebraic closedness.

\subsection{Main results}

To address these hurdles, we revisit notions imported from the totally ordered case. 
A polynomial $p$ is said to be \textit{closable} if a certain family of infima associated with its polynomial function exists; in this case, one can canonically associate to it a maximal representative $\overline{p}$ having the same polynomial function, with a construction reminiscent of the Legendre--Fenchel transform. 
A polynomial $p$ is \textit{closed} if it is closable and such that $p = \overline{p}$. 

We prove that closed polynomials coincide with polynomials that split into products of linear factors. 
More precisely, we obtain the following result, which originates from Cuninghame-Green and Meijer \cite[Theorem~8]{Cuninghame80}, who discovered the fundamental result of factorization of polynomials with coefficients in the tropical semifield $\mathbb{R}_{\max}$. 
It also extends Baccelli et al.\ (\cite[Lemma~3.41]{Baccelli92}, \cite[Theorem~3.43]{Baccelli92}) and Rump \cite[Proposition~3]{Rump15}. 

\begin{theorem*}[First fundamental theorem, simplified version]
Let $p \in \cis[X]$ be a polynomial of degree $n > 0$. 
Then $p$ is closed if and only if $p$ splits in $\cis$. 
In this case, 
\[
p(X) = p_n (X \plus c_1) \cdots (X \plus c_n), 
\] 
where $c_k := p_{n-k}^{} \cdot p_{n-k+1}^{-1}$ if $p_{n-k+1} \neq 0$, and $c_k := 0$ otherwise, and we have $c_1 \geqslant \ldots \geqslant c_n$. 
\end{theorem*}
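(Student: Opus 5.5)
The plan is to pass through an intermediate "concavity" condition on the coefficients. I would show that each of the two properties in the statement is equivalent to: the $c_k$ defined in the statement satisfy $c_1 \geqslant \ldots \geqslant c_n$, together with support conditions (the nonzero coefficients form an interval ending at $p_n$). For nonzero coefficients this condition reads $p_k^2 \geqslant p_{k-1}p_{k+1}$. Throughout I use the description of $\overline{p}$ given earlier in the paper:
\[
\overline{p}_k = \bigwedge_{x \neq 0} \widehat{p}(x)\, x^{-k},
\]
where $\widehat{p}$ is the polynomial function. Closed then means that these infima exist and equal $p_k$.

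\emph{Split $\Rightarrow$ concave coefficients, and the displayed formula.} Suppose $p = p_n (X+a_1)\cdots(X+a_n)$. Using the identity $(X+a)(X+b) = (X+a\vee b)(X+a\wedge b)$ repeatedly, as in a bubble sort, I rewrite the factors so that $a_1 \geqslant \ldots \geqslant a_n$. This works because the multiset of factors can be sorted by pairwise swaps in a distributive lattice, and lattice-ordered groups are distributive. Expanding, the coefficient of $X^{n-k}$ is $p_n$ times the $k$-th elementary symmetric sum $\sum_{i_1<\cdots<i_k} a_{i_1}\cdots a_{i_k}$. Since the $a_i$ are now a chain, this sum is dominated by its term $a_1\cdots a_k$. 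Hence $p_{n-k} = p_n a_1\cdots a_k$. This gives $c_k = p_{n-k}p_{n-k+1}^{-1} = a_k$ when $p_{n-k+1}\neq 0$. If $p_{n-k+1}=0$, then some $a_j = 0$ with $j\le k-1$, so $a_k = 0 = c_k$. So the formula holds and the $c_k$ decrease.

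\emph{Split $\Rightarrow$ closed.} With $p_{n-k} = p_n a_1\cdots a_k$, I need $\bigwedge_x \widehat{p}(x)x^{-(n-k)} = p_{n-k}$. The inequality $\geqslant p_{n-k}$ holds termwise. For the reverse, I evaluate at the particular point $x = a_k$, or at $x=a_{k+1}$, or more carefully at any $x$ with $a_{k+1}\leqslant x\leqslant a_k$. There each factor satisfies $x+a_i = a_i$ for $i\le k$ and $x+a_i = x$ for $i>k$. This gives $\widehat{p}(x)x^{-(n-k)} = p_{n-k}$ exactly. So the infimum exists, being attained, and equals $p_{n-k}$. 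Edge cases with zero $a_i$ need the point $x$ to be nonzero, e.g. $x=a_k$ when $a_k\ne0$; otherwise the relevant coefficient is $0$ and a limiting argument or the support convention from the definition of closable handles it.

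\emph{Closed $\Rightarrow$ split.} This is the main obstacle, since there is no total order with which to read off breakpoints. I would first show concavity: for $x\neq 0$, $\widehat{p}(x) \geqslant p_{k-1}x^{k-1}\vee p_{k+1}x^{k+1}$. Taking $x$ to be a suitable "geometric mean" point seems to require radicability, so I avoid it. Instead, I use closedness in the form
\[
p_k \;\geqslant\; \bigwedge_x \bigl(p_{k-1}x^{-1}\vee p_{k+1}x\bigr) \;\geqslant\; \sqrt{p_{k-1}p_{k+1}},
\]
where the last step follows from $(u\vee v)^2 \geqslant uv$ in a lattice-ordered group, applied pointwise and then squared: $p_k^2 \geqslant \bigwedge_x (p_{k-1}x^{-1}\vee p_{k+1}x)^2 \geqslant p_{k-1}p_{k+1}$. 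The infimum commutes with squaring because squaring is an order automorphism of the positive part. This gives $c_{k}\geqslant c_{k+1}$. The same argument with a zero coefficient gives the interval-support condition. Once the $c_k$ are decreasing and $p_{n-k} = p_n c_1\cdots c_k$ by telescoping, the expansion computed in the first step shows that $p_n\prod(X+c_k)$ has exactly the coefficients of $p$. So $p$ splits.
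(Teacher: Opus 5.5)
Your route is essentially the paper's: split $\Rightarrow$ sorted corners $\Rightarrow$ closed by evaluating at a corner, and closed $\Rightarrow$ concavity of the coefficients $\Rightarrow$ full support and decreasing $c_k$ $\Rightarrow$ split. The gap is the sentence ``the same argument with a zero coefficient gives the interval-support condition.'' What you actually derive from closedness are the adjacent inequalities $p_k^2 \geqslant p_{k-1}p_{k+1}$. When $p_k = 0$, these only say that a zero coefficient cannot sit between two non-zero ones, so they cannot exclude a run of two or more zeros. Take $p(X) = X^3 + 1$: both $p_1^2 \geqslant p_0p_2$ and $p_2^2 \geqslant p_1p_3$ hold (all sides are $0$), and the $c_k$ of the statement satisfy $0 \geqslant 0 \geqslant 0$. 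Yet $p_3(X+c_1)(X+c_2)(X+c_3) = X^3 \neq p(X)$. Your final telescoping step needs $p_{n-k} = p_n c_1 \cdots c_k$, hence genuinely needs full support, and what you proved does not supply it.

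The missing ingredient is a non-adjacent, weighted version of your inequality. This is exactly what the paper uses: the concavity property with arbitrary $m_1, m_2$, and Lemma~\ref{lem:support}. Suppose $p_j \neq 0$ with $j < n-1$. For $x \neq 0$ you have $\widehat{p}(x)x^{-(j+1)} \geqslant p_j x^{-1} \vee p_n x^{n-j-1}$. Applying $(u \vee v)^{a+b} \geqslant u^a v^b$ with $a = n-j-1$ and $b = 1$ gives $\bigl(\widehat{p}(x)x^{-(j+1)}\bigr)^{n-j} \geqslant p_j^{n-j-1}p_n$. Taking the infimum yields $p_{j+1}^{n-j} \geqslant p_j^{n-j-1}p_n \neq 0$.

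Three smaller corrections. First, the infimum commutes with powers by Lemma~\ref{prop:sn}, not because squaring is an order automorphism. Squaring is only an order embedding unless $\cis$ is radicable (e.g.\ in $\mathbb{Z}_{\max}$ it is not surjective), and order embeddings need not preserve infima. Second, in the split $\Rightarrow$ closed direction the case $a_k = 0$ needs no limiting argument: then $\val p > n-k$, and Lemma~\ref{lem:0n}\eqref{lem:0n3} gives $\overline{p}_{n-k} = 0 = p_{n-k}$. Third, your bubble-sort step is correct in a distributive lattice but deserves a justification; the paper obtains it by induction in Lemma~\ref{lem:split}.
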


The elements $c_k$ are called the \textit{corners} of the closed polynomial $p$. 
The second theorem characterizes closable polynomials as those whose associated polynomial functions admit such a factorization.
It extends Baccelli et al.\ \cite[Theorem~3.89]{Baccelli92}. 

\begin{theorem*}[Second fundamental theorem]
Let $p \in \cis[X]$ be a polynomial of degree $n > 0$. 
Then $p$ is closable if and only if $\widehat{p}$ splits in $\cis$. 
In this case, $\widehat{p}(x) = p_n (x \plus c_1) \cdots (x \plus c_n)$, for all $x \in \cis$, where $c_1 \geqslant \ldots \geqslant c_n$ are the corners of $\overline{p}$. 
\end{theorem*}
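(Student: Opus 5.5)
We derive the Second fundamental theorem from the First, using the basic properties of the closure $\overline{p}$ set up with the definition of closability: when $p$ is closable, $\overline{p}$ is a polynomial of the same degree, it is closed, it has the same polynomial function ($\widehat{\overline{p}} = \widehat{p}$), and it is the largest polynomial with this function. In particular $\overline{p}_n = p_n$, since the leading coefficient is read off from the growth of $\widehat{p}$ (equivalently, from the infimum defining the top coefficient of $\overline{p}$).

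\emph{Forward direction.} Suppose $p$ is closable. Then $\overline{p}$ is closed of degree $n$, so the First fundamental theorem gives the formal factorization
\[
\overline{p}(X) = p_n (X \plus c_1) \cdots (X \plus c_n),
\]
with $c_1 \geqslant \ldots \geqslant c_n$ the corners of $\overline{p}$. Evaluation $q \mapsto \widehat{q}$ is a semiring morphism. Hence, for every $x \in \cis$,
\[
\widehat{p}(x) = \widehat{\overline{p}}(x) = p_n (x \plus c_1) \cdots (x \plus c_n).
\]
So $\widehat{p}$ splits, with the stated factors.

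\emph{Converse.} Suppose $\widehat{p}(x) = a (x \plus d_1) \cdots (x \plus d_n)$ for all $x$. Using the identity $(X \plus a)(X \plus b) = (X \plus a \vee b)(X \plus a \wedge b)$ repeatedly (a sorting-network argument), we may rearrange the $d_i$ so that $d_1 \geqslant \ldots \geqslant d_n$ without changing the product. Set
\[
q := a (X \plus d_1) \cdots (X \plus d_n).
\]
Then $q$ splits, so by the First fundamental theorem it is closed, and $\widehat{q} = \widehat{p}$. Closability is a condition on the polynomial function alone: it asks for the existence of certain infima defined from $\widehat{p}$, typically of the form $\bigwedge_{x \neq 0} \widehat{p}(x)\, x^{-k}$. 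Since $q$ is closable and $\widehat{q} = \widehat{p}$, these infima exist for $p$ as well, so $p$ is closable. Moreover $\overline{p}$ is determined by $\widehat{p}$, so $\overline{p} = \overline{q} = q$. Therefore the corners of $\overline{p}$ are the $d_i$ and $a = p_n$, which recovers the factorization in the statement.

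\emph{Main obstacle.} The delicate points are the two "function-determined" facts. The first is that closability and $\overline{p}$ depend only on $\widehat{p}$. If the definition instead involves $p$ more directly, we must show that the infima computed from $\widehat{q}$ coincide with those for $p$. The second is that $\overline{p}$ has degree exactly $n$, with leading coefficient $p_n$ and, where relevant, lowest coefficient $p_0$. I would check these first by evaluating at suitable large and small $x$, or by directly comparing coefficients, since $\overline{p} \geqslant p$ coefficientwise while having the same function.
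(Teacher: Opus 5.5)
Your proof is correct. The forward direction coincides with the paper's; the converse takes a different and shorter route.

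The paper proves the converse by direct computation. For $\val p < j < \deg p$ it writes $\widehat{p}(x) = p_n (x \plus c_1)\cdots(x \plus c_n)$ with $c_1 \geqslant \ldots \geqslant c_n$. It then checks by hand that $p_n c_1 \cdots c_{n-j}$ is a lower bound of $\{\widehat{p}(x)x^{-j} : x \in \cis^*\}$ attained at $x = c_{n-j}$. This amounts to re-running the ``splits $\Rightarrow$ closed'' step of Theorem~\ref{thm:ft1} at the level of functions.

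You instead build the formal product $q$ and get its closedness from Theorem~\ref{thm:ft1}. You then transfer closability to $p$, because the infima defining closability and $\overline{p}$ involve only $\widehat{p}$. This is the same observation the paper uses in the proof of Lemma~\ref{lem:pbar}\eqref{lem:pbar4}.

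What each route buys:
\begin{itemize}
\item Yours is more economical and yields the explicit identification $\overline{p} = q$. Hence the corners of $\overline{p}$ are the ordered $d_i$, and $a = \overline{p}_n = p_n$ by Lemma~\ref{lem:0n}.
\item The paper's is self-contained and exhibits the minimizer $c_{n-j}$ directly. You also recover this from the last clause of Theorem~\ref{thm:ft1} applied to $q$.
\end{itemize}

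Minor points:
\begin{itemize}
\item The ``obstacles'' you list are exactly Lemmas~\ref{lem:0n} and~\ref{lem:pbar}, which are already available before the theorem.
\item $a \neq 0$ holds because $\widehat{p}(1) \geqslant p_n \neq 0$; in the paper's definition of splitting, $a = p_n$ anyway.
\item Ordering the $d_i$ is irrelevant to $q$ being closed; it only serves to name the corners. For that you can cite Lemma~\ref{lem:split} instead of a sorting-network argument. The sorting-network argument is valid, since comparator networks sort in any distributive lattice, but it would need that justification.
\end{itemize}
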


As a consequence, we obtain a characterization of algebraically closed idempotent semifields in terms of closability of polynomials. 

\begin{corollary*}
An idempotent semifield $\cis$ is algebraically closed if and only if every polynomial of $\cis[X]$ is closable. 
\end{corollary*}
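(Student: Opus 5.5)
The corollary should follow from the Second fundamental theorem once algebraic closedness is unwound. I take the definition announced in the abstract: $\cis$ is algebraically closed if and only if, for every polynomial $p \in \cis[X]$, the polynomial function $\widehat{p}$ splits in $\cis$. Concretely, $\widehat{p}$ splits if there are $a \in \cis$ and $c_1, \ldots, c_n \in \cis$ with $\widehat{p}(x) = a (x \plus c_1) \cdots (x \plus c_n)$ for all $x \in \cis$. The proof is then a direct comparison of two quantifications over $\cis[X]$.

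\textbf{Degenerate polynomials.} First I would dispose of the cases the theorem does not cover, namely constant polynomials, i.e. degree $0$ together with the zero polynomial.
\begin{itemize}
\item A constant $p = p_0$ has $\widehat{p} = p_0$. This is trivially a split function, an empty product of linear factors times $p_0$.
\item Such a $p$ is also closable. The family of infima defining $\overline{p}$ reduces to a single coefficient, so the infima exist trivially and $\overline{p} = p$.
\end{itemize}
I would verify the second point against the precise definition of closability. Depending on the conventions, the zero polynomial may need to be treated separately, but it is closable and split in any reasonable sense. In any case, both conditions hold automatically for constants.

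\textbf{Nonconstant polynomials.} For $p$ of degree $n > 0$, the Second fundamental theorem says that $p$ is closable if and only if $\widehat{p}$ splits in $\cis$.
\begin{itemize}
\item Suppose $\cis$ is algebraically closed. Then every $\widehat{p}$ splits, so every nonconstant $p$ is closable. Constants are closable by the first step.
\item Suppose every polynomial is closable. Then by the theorem every $\widehat{p}$ with $\deg p > 0$ splits, and constant ones split by the first step. Hence $\cis$ is algebraically closed.
\end{itemize}

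\textbf{Main obstacle.} The only real difficulty is checking that the paper's formal definition of algebraic closedness matches "every polynomial function splits", rather than, say, "every polynomial function has a root". If it is the root-based notion, I would add a step reducing to splitting. By induction on the degree, I would factor out a linear factor $(x \plus c)$ at the function level using a root or corner. This is presumably the content of results relating roots and corners stated between the theorems and the corollary.
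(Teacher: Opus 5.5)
Your argument is correct and follows the paper's approach. The paper gives the corollary as an immediate consequence of the Second fundamental theorem, applied polynomial by polynomial. Its definition of algebraically closed is precisely that $\widehat{p}$ splits for every $p$ with $\deg p > 0$, so your contingency about a root-based definition is unnecessary.

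One small correction concerns your treatment of constants. Closability of a nonzero constant $p_0$ is not trivial, because for $j \geqslant 1$ the set $\{p_0 x^{-j} : x \in \cis^*\}$ is not a singleton. Its infimum is $0$ by Lemma~\ref{lem:0n}\eqref{lem:0n3}. That lemma rests on $\bigwedge \cis^* = 0$ (Lemma~\ref{lem:inf0bis}), which uses $\cis \neq \mathbb{B}$.
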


Here, an idempotent semifield $\cis$ is \textit{algebraically closed} if, for every polynomial $p \in \cis[X]$ of degree $> 0$, the associated polynomial function $\widehat{p}$ splits in $\cis$. 
Other definitions associated to the term of ``algebraic closedness'' can be found in the literature. 
An idempotent semifield $\cis$ is \textit{radicable} (algebraically closed in the sense of Shpiz \cite{Shpiz00}) if, for all $y \in \cis$, $n \in \mathbb{N}^*$, there is an element (necessarily unique) $x \in \cis$ such that $x^n = y$.  
Shpiz proved that, in this situation, every polynomial equation of the form $\widehat{p}(x) = t$ has a (greatest) solution if and only if $p_0 \leqslant t$, and this solution is unique if $p_0 = 0$. 
More generally, we prove the existence of solutions to certain polynomial equations of the form $\widehat{p}(x) = \widehat{q}(x)$ under the assumption of radicability. 
We also introduce the weaker notion of \textit{preradicability}, which helps in proving the existence of solutions to polynomials inequalities of the form $\widehat{p}(x) \leqslant \widehat{q}(x)$. 

Also, an idempotent semifield $\cis$ is \textit{equationally closed} (algebraically closed in the sense of Rump \cite{Rump15}) if, for all $p, q \in \cis[X]$, the existence of a solution to $\widehat{p}(x) = \widehat{q}(x)$ in an extension of $\cis$ implies the existence of a solution in $\cis$ itself. 
Rump characterized equationally closed idempotent semifields as those idempotent semifields being radicable and having the property that, whenever $x \wedge y = 1$ and $x \plus y \leqslant z$, there exists $x' \geqslant x$ and $y' \geqslant y$ such that $x' \wedge y' = 1$ and $x' \plus y'  = z$.
Interestingly, this latter property always holds in the totally ordered case. 


Gathering results from the literature and results proved in this paper, we highlight the way these notions are linked with one another as follows. 

\begin{theorem*}
Let $\cis$ be an idempotent semifield. 
Consider the following conditions:
\begin{enumerate}
  \item\label{prop:star1} $\cis$ is equationally closed;
  \item\label{prop:star2} $\cis$ is radicable;
  \item\label{prop:star3} $\cis$ is algebraically closed and order-dense;
  \item\label{prop:star4} $\cis$ is algebraically closed;
  \item\label{prop:star5} $\cis$ is preradicable. 
\end{enumerate}
Then \eqref{prop:star1} $\Rightarrow$ \eqref{prop:star2} $\Rightarrow$ \eqref{prop:star3} $\Rightarrow$ \eqref{prop:star4} $\Rightarrow$ \eqref{prop:star5}. 
If moreover $\cis$ is totally ordered, then \eqref{prop:star1} $\Leftrightarrow$ \eqref{prop:star2} $\Leftrightarrow$ \eqref{prop:star3} $\Rightarrow$ \eqref{prop:star4} $\Leftrightarrow$ \eqref{prop:star5}. 
\end{theorem*}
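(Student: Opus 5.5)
We prove the chain of implications one link at a time, and then add the converses under total order. The first link, \eqref{prop:star1} $\Rightarrow$ \eqref{prop:star2}, we take from Rump's characterization quoted above: an equationally closed idempotent semifield is radicable. For a direct check, the equation $x^n = y$ has a solution in the radicable hull of $\cis$ (an extension), so by equational closedness it has one in $\cis$.

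For \eqref{prop:star2} $\Rightarrow$ \eqref{prop:star3}, we first show order-density. Given $a < b$, set $x := (ab)^{1/2}$, the square root supplied by radicability. We get $a \leqslant x \leqslant b$ because $a^2 \leqslant ab \leqslant b^2$ and the map $t\mapsto t^2$ is an order embedding: in a lattice-ordered group $t^2 \leqslant s^2$ implies $t \leqslant s$. Both inequalities are strict since $x^2 = ab \neq a^2, b^2$.

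For algebraic closedness we use the Corollary, so it suffices to show that every $p$ is closable. The infima defining $\overline{p}$ are (by the Legendre--Fenchel-like construction) infima over $x$ of expressions $\widehat{p}(x)x^{-k}$. With radicability we exhibit them explicitly. We define candidate coefficients $\overline{p}_k := \bigvee_{i<k<j} (p_i^{j-k} p_j^{k-i})^{1/(j-i)} \vee p_k$, which is a finite join of $n$-th roots, hence exists. We then verify that this is the greatest polynomial with function $\widehat{p}$. This verification is the main obstacle. It reduces to the weighted inequality $(p_i^{j-k}p_j^{k-i})^{1/(j-i)}x^k \leqslant p_ix^i \vee p_jx^j$, which holds by the arithmetic--geometric type inequality $u^\alpha v^\beta \leqslant (u\vee v)^{\alpha+\beta}$ valid in lattice-ordered groups. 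For attainment of the infimum we plug in $x = (p_ip_j^{-1})^{1/(j-i)}$.

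The link \eqref{prop:star3} $\Rightarrow$ \eqref{prop:star4} is trivial. For \eqref{prop:star4} $\Rightarrow$ \eqref{prop:star5}, we apply algebraic closedness to $p = X^n \plus y$, or a suitable binomial. The function splits as $\prod (x \plus c_k)$, and comparing coefficients via the corners of $\overline{p}$ (the Second fundamental theorem) gives $c_1\cdots c_n = y$ with $c_1\geqslant\dots\geqslant c_n$. This is exactly the data required by preradicability: an element $c_1$ with $c_1^n \geqslant y \geqslant c_n^n$, from which the sandwiching property follows.

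In the totally ordered case, \eqref{prop:star3} $\Rightarrow$ \eqref{prop:star2} is obtained as follows. Take $p = X^n \plus 1\cdot y$, whose closure has all corners equal, since order-density lets us show the infimum of $\widehat{p}(x)x^{-k}$ is attained only at a balanced point. By the First fundamental theorem with total order, equal corners $c$ give $c^n = y$. Next, \eqref{prop:star2} $\Rightarrow$ \eqref{prop:star1} follows from Rump's characterization, because his extra lattice condition holds automatically under total order. Finally, \eqref{prop:star5} $\Rightarrow$ \eqref{prop:star4} holds since in a chain each infimum $\bigwedge_x \widehat{p}(x)x^{-k}$ is a finite minimum over the breakpoints provided by preradicability. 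Hence every $p$ is closable, and the Corollary applies.
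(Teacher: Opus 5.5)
\textbf{Main gap: the greatest-lower-bound half of (2)$\Rightarrow$(3).} Your overall architecture matches the paper's: Rump's characterization for (1)$\Leftrightarrow$(2), an explicit closure formula for (2)$\Rightarrow$(3), the binomial $X^n\plus y$ for (4)$\Rightarrow$(5), and breakpoints for (5)$\Rightarrow$(4) in chains. Write $y_k:=p_k\vee\bigvee_{i<k<j}(p_i^{j-k}p_j^{k-i})^{1/(j-i)}$ for your candidate. Your weighted AM--GM argument that $y_k$ is a lower bound of $\{\widehat{p}(x)x^{-k}:x\in\cis^*\}$ is correct, and more direct than the paper's. The gap is showing that $y_k$ is the \emph{greatest} lower bound. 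Plugging in $x=(p_ip_j^{-1})^{1/(j-i)}$ presupposes that a single pair $(i,j)$ realizes the join $y_k$. That is automatic in a chain but false in general. Let $\cis=\mathbb{R}^2\cup\{-\infty\}$ with componentwise $\max$ and $+$; it is radicable but not totally ordered, and the coordinates below are additive. Take $p=p_3X^3\plus p_2X^2\plus p_0$ with $p_0=(0,0)$, $p_2=(0,-10)$, $p_3=(-3,0)$. Then $y_1=(0,-5)\vee(-1,0)=(0,0)$. Your two test points $x=(0,5)$ and $x=(1,0)$ give $\widehat{p}(x)x^{-1}=(0,10)$ and $(1,0)$, and neither lies below $y_1$. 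The infimum is actually attained at $x=(0,0)$, which follows the pair $(0,2)$ in one coordinate and $(0,3)$ in the other. In a general $\ell$-group no such coordinatewise assembly is available, so as written you only prove closability for totally ordered $\cis$. The missing idea is the paper's. Set $q(X)=p_0X^k\plus\cdots\plus p_{k-1}X$, so that $\widehat{p}(x)x^{-k}=\widehat{p^{(k)}}(x)\plus\widehat{q}(x^{-1})$. The residuation formula \eqref{eq:psharp} together with the Frobenius identity turns $y\geqslant y_k$ into $y\geqslant\widehat{p^{(k)}}(\widehat{q}^{\#}(y)^{-1})$. Since $\widehat{q}$ is bijective (Lemma~\ref{lem:pinv}), the single point $x_k:=\widehat{q}^{\#}(y_k)^{-1}$ satisfies $\widehat{p}(x_k)x_k^{-k}=y_k$, which handles all pairs at once.

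\textbf{Smaller repairs and one alternative route.} In the order-density step, $(ab)^{1/2}=0=a$ when $a=0$, so that case needs its own argument. For example, $0<bu<b$ for any $0<u<1$, and such a $u$ exists since $\cis\neq\mathbb{B}$. In (4)$\Rightarrow$(5), $c_1^n\geqslant y$ is not ``the data required by preradicability'': some $x$ with $x^n\geqslant y$ always exists. You must show $c_1$ is the \emph{least} such element. As in the paper: if $s^n\geqslant y$ then $\prod_k(s\plus c_k)=\widehat{p}(s)=s^n$, so $\prod_k(1\plus c_ks^{-1})=1$ with every factor $\geqslant 1$, whence $c_1\leqslant s$. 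Your (3)$\Rightarrow$(2) for chains takes a different route from the paper, which passes through preradicability and compares the least $r_1$ with $r_1^n\geqslant t$ to the least $r_2$ with $r_2^n\geqslant t^{-1}$. Your claim that all corners of $X^n\plus y$ coincide is right, but the ``balanced point'' justification should be made explicit. For $y\neq 0$, suppose $c_i>c_{i+1}$ and pick $c_{i+1}<s<c_i$ by density. Then $\widehat{p}(s)=c_1\cdots c_is^{n-i}$ is strictly above both $s^n$ and $y=c_1\cdots c_n$. This is impossible in a chain, where $\widehat{p}(s)=s^n\plus y\in\{s^n,y\}$. Hence all $c_k$ equal some $c$ with $c^n=y$, which gives a valid and more direct alternative to the paper's argument.
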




\subsection{Structure of the article}




The paper is organized as follows. 
In Section~\ref{sec:semirings}, we recall usual definitions from poset theory and semiring theory, as well as some essential properties of idempotent semirings and idempotent semifields. 
In Section~\ref{sec:poly}, we recall the definition of a polynomial with coefficients in an idempotent semifield $\cis$. 
Thanks to a result due to Crosby, we prove that the set $\cis[X]$ of polynomials over $\cis$ has the structure of a residuated idempotent semiring without zero divisors. 
We then consider polynomial functions and show that they follow a series of interesting properties. 
Notably, every polynomial function preserves arbitrary sups of nonempty subsets, and arbitrary non-zero infs of nonempty subsets; remarkably, it is an order-embedding (hence is injective) whenever it maps $0$ to $0$. 
Section~\ref{sec:clo} is a cornerstone of the paper, as we introduce the notions of \textit{closable} and \textit{closed} polynomial. 
We prove that a polynomial is closable if and only if there is a greatest polynomial with the same polynomial function. 
Closable and closed polynomials reveal their importance in Section~\ref{sec:ft} with our first and second ``fundamental theorems'', where we prove that a polynomial is closed (resp.\ closable) if and only if it splits (resp.\ its polynomial function splits). 
Calling an idempotent semifield \textit{algebraically closed} if every polynomial function splits, we obtain the characterization that an idempotent semifield is algebraically closed if and only if every polynomial is closable. 
It is then a trivial consequence that every complete idempotent semifield, in particular $\mathbb{R}_{\max}$, is algebraically closed. 
In passing, we prove that a basket of roots of size $k < n$ of a closed polynomial of degree $n$ can be completed to a basket of roots of size $n$. 
Section~\ref{sec:ineq} focuses on the concept of a \textit{preradicable} idempotent semifield, a property that is shown to be useful for solving polynomial inequalities. 
We also prove that every algebraically closed idempotent semifield is preradicable, and that the converse statement holds true if the idempotent semifield is totally ordered. 
In Section~\ref{sec:eq}, we introduce \textit{radicability} as a property stronger than preradicability to qualify an idempotent semifield. 
This property attests to its strength as it provides existence of solutions to polynomial equations. 
We also prove that every radicable idempotent semifield is algebraically closed and order-dense, and that the converse statement holds true if the idempotent semifield is totally ordered. 
In the last part of the section, we consider \textit{rational polynomials}, defined as formal polynomials modulo a non-zero polynomial factor. 
We also prove that, given a radicable idempotent semifield, there is an isomorphism of semirings between rational polynomials and polynomial functions, and that every rational polynomial splits. 
The final section discusses perspectives and further developments.

\subsection{General notations}

We write $\mathbb{R}$ (resp.\ $\mathbb{R}_+$) for the set of real numbers (resp.\ nonnegative real numbers). 
We also write $\mathbb{Q}$ (resp.\ $\mathbb{Q}_+$, $\mathbb{Z}$, $\mathbb{N}$) for the set of rational numbers (resp.\ non-negative rational numbers, integers, natural numbers) and $\mathbb{N}^*$ for the set of non-zero natural numbers.


\section{Semirings and semifields}\label{sec:semirings}

In this section, we recall usual definitions from poset theory and semiring theory, as well as some properties of idempotent semirings and idempotent semifields that will be used repeatedly along this paper. 

\subsection{Partially ordered sets and lattices}

A \textit{partially ordered set} or \textit{poset} $(P,\leqslant)$ is a set $P$ together with a reflexive, transitive, and antisymmetric binary relation $\leqslant$. 
An \textit{upper bound} of a subset $A$ is an element $u$ such that $x \leqslant u$ for all $x \in A$. 
An upper bound of $A$ is the \textit{least upper bound} or \textit{sup} of $A$, written $\bigvee A$, if $\bigvee A \leqslant u$ for every upper bound $u$ of $A$. 
The notions of \textit{lower bound} and \textit{inf} are defined dually; the inf of a subset $A$, if it exists, is written $\bigwedge A$. 
A \textit{lattice} is a poset in which every nonempty finite subset has both an inf and a sup; we write $x \vee y$ for $\bigvee \{ x, y \}$ and $x \wedge y$ for $\bigwedge \{ x, y \}$. 

\subsection{Definition of semirings and semifields}

A \textit{semiring} is a commutative monoid $(S, \plus, 0)$ endowed with an additional binary relation $\times$ (the multiplication) that is associative,  has an identity $1$, distributes over $\plus$, and admits $0$ as absorbing element, i.e.\ $0 x = x 0 = 0$ for all $x \in S$. 

A semiring $S$ is:
\begin{itemize}
  \item \textit{idempotent} if $x + x = x$, for all $x \in S$; 
  \item \textit{commutative} if $x y = y x$, for all $x, y \in S$;
  \item \textit{without zero divisors} if ${ 0 = x y } \Rightarrow { 0 \in \{ x, y \} }$, for all $x, y \in S$;
  \item \textit{$\times$-cancellative} if $(x z = y z \mbox{ or } z x = z y) \Rightarrow x = y$, for all $x, y \in S$ and $z \in S \setminus \{ 0 \}$. 
\end{itemize}
On semiring theory, the reader may refer to Golan's books \cite{Golan99} and \cite{Golan03}, Glazek's survey \cite{Glazek02}, and Gondran and Minoux's monograph \cite{Gondran08}. 

An idempotent semiring has a natural structure of partially ordered set with $x \leqslant y \Leftrightarrow  x \plus y = y$, whose least element is $0$. 
This makes $x \plus y$ the sup of the pair $\{x, y\}$, i.e.\ 
\[
x \plus y = x \vee y, 
\]
for all $x, y$. 
Therefore, given elements $x, y, z$, the reader should get used to assertions such as $(x \leqslant z \mbox{ and } y \leqslant z) \Leftrightarrow x + y \leqslant z$.  

A \textit{semifield} is a semiring with an identity $1 \neq 0$ in which every non-zero element has a multiplicative inverse. 
Idempotent semifields coincide with the class of semifields with characteristic one, where the \textit{characteristic} of a semifield is the least $k \in \mathbb{N}^*$, if it exists, such that $(k + 1) 1 = 1$ (if it does not exists, it is conventionally set to $0$). 
The concept of characteristic one has been introduced simultaneously by Connes and Consani \cite{Connes12} and Castella \cite{Castella10}; see also the early work of Zeleznekow \cite{Zeleznikow81a} on inverse semirings, and Rump \cite[Section~2]{Rump16}. 

An idempotent semiring or semifield is \textit{totally ordered} if $x \plus y \in \{x, y\}$, for all $x, y$; equivalently, if $x \leqslant y$ or $y \leqslant x$, for all $x, y$. 
Note that \textit{linearly ordered} is a classical synonym for totally ordered; one may also find in the literature terms like \textit{bipotent} (see \cite{Perri13}) or \textit{selective} (see \cite{Gondran08}, \cite{Tolliver14}). 

\begin{example}[Boolean semifield]
The \textit{Boolean semifield} $\mathbb{B}$ is defined as the set $\{ 0, 1 \}$ with $0 \neq 1$, endowed with the usual multiplication and the addition $+$ defined by $0 + 0 = 0$ and $1 + 0 = 0 + 1 = 1 + 1 = 1$. 
Recall that $\mathbb{B}$ is the only finite idempotent semifield \cite[Proposition~1.35]{Golan03}. 
\end{example}

\begin{example}[Tropical semifield]
The \textit{tropical semifield} (or \textit{max-plus algebra}) $\mathbb{R}_{\max}$ is defined as
\[
\mathbb{R}_{\max} := (\mathbb{R} \cup \{ -\infty \}, \max, -\infty, +, 0). 
\]
This idempotent semifield and its dual $\mathbb{R}_{\min} := (\mathbb{R} \cup \{ +\infty \}, \min, +\infty, +, 0)$ have been rediscovered many times due to their versatility in various areas of pure and applied mathematics, see e.g.\ Akian et al.\ \cite{Akian06} for background and references. 
\end{example}

\begin{example}\label{ex:qdiv}
Consider the set $\mathbb{Q}_+$ endowed with the usual multiplication and the dual divisibility relation $\preceq$, so that $x \preceq y$ if $y$ divides $x$, in the sense that $x = n y$, for some $n \in \mathbb{N}$. 
Then $0$ is the least element of $\mathbb{Q}_+$ with respect to $\preceq$, and $\preceq$ is a lattice order, where $x \vee y$ (resp.\ $x \wedge y$) is the greatest common divisor (resp.\ least common multiple) of $\{ x, y \}$.
This makes 
\[
\mathbb{Q}_+^{\divi} := (\mathbb{Q}_+, \vee, 0, \times, 1)
\]
into a (not totally ordered) idempotent semifield. 
\end{example}

\subsection{Some properties of idempotent semirings and semifields}\label{subsec:prop}

In this paragraph we recall some classical properties of idempotent semirings and idempotent semifields that will be needed many times in this paper. 

\begin{lemma}[Binomial identity]\label{lem:bi}
Let $S$ be a commutative idempotent semiring. 
Then 
\[
(x \plus y)^n = \Plus_{k=0}^n x^k y^{n-k}, 
\]
for all $x, y \in S$ and $n \in \mathbb{N}$. 
\end{lemma}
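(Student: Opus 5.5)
The plan is to argue by induction on $n$, using only commutativity, distributivity and idempotency of addition. For $n = 0$ both sides equal $1$, since $(x \plus y)^0 = 1 = x^0 y^0$. For $n = 1$ the right-hand side is $y \plus x$, which equals $x \plus y$ by commutativity of addition.

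For the inductive step, I will assume the identity for $n$ and write $(x \plus y)^{n+1} = (x \plus y)^n (x \plus y)$. Substituting the induction hypothesis and distributing gives
\[
(x \plus y)^{n+1} = \Plus_{k=0}^n x^{k+1} y^{n-k} \plus \Plus_{k=0}^n x^k y^{n+1-k},
\]
where commutativity of multiplication is used to rearrange the monomials as $x^{k+1} y^{n-k}$. After reindexing the first sum by $j = k+1$, the monomial $x^j y^{n+1-j}$ appears once for $j = 0$ and for $j = n+1$, and twice for each $1 \leqslant j \leqslant n$.

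The only point that differs from the classical binomial theorem is the treatment of these repeated terms, and idempotency handles it. Using commutativity and associativity of $\plus$, I will group each pair of equal monomials, and then $x^j y^{n+1-j} \plus x^j y^{n+1-j} = x^j y^{n+1-j}$ collapses them. What remains is exactly $\Plus_{j=0}^{n+1} x^j y^{n+1-j}$, which completes the induction.

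In general, idempotency makes every integer binomial coefficient $\binom{n}{k} 1$ with $\binom{n}{k} \geqslant 1$ equal to $1$. There is no real obstacle beyond this bookkeeping.
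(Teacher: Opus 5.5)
Your proof is correct and takes the same approach as the paper, which only says that the identity follows by induction on $n$. Your inductive step, where idempotency collapses the duplicated monomials $x^j y^{n+1-j}$ for $1 \leqslant j \leqslant n$, supplies exactly the detail the paper omits.
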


\begin{proof}
The result can easily be proved by induction on $n$. 
\end{proof}

\begin{lemma}[Frobenius identity]\label{lem:frobenius}
Let $S$ be a $\times$-cancellative, commutative idempotent semiring. 
Then 
\begin{align*}\label{eq:frobenius}
(x \plus y)^n = x^n \plus y^n, 
\end{align*}
for all $x, y \in S$ and $n \in \mathbb{N}$. 
\end{lemma}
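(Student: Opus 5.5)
By Lemma~\ref{lem:bi} we have $(x \plus y)^n = \Plus_{k=0}^n x^k y^{n-k} \geqslant x^n \plus y^n$. So the whole task is the reverse inequality: I will show that each mixed term satisfies $x^k y^{n-k} \leqslant x^n \plus y^n$ for $0 < k < n$. The cases $n \in \{0,1\}$ are trivial, so assume $n \geqslant 2$. Commutativity and idempotency are used throughout.

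I first isolate the key fact, proved by cancellation: in a $\times$-cancellative commutative idempotent semiring, $u^m \leqslant v^m$ implies $u \leqslant v$. Set $w := u \plus v$, so $w \geqslant v$. By Lemma~\ref{lem:bi}, $w^m = \Plus_{k} u^k v^{m-k}$. The hypothesis $u^m \leqslant v^m$ suggests comparing $w^m$ with $v\,w^{m-1}$. We have
\[
v w^{m-1} = \Plus_{k=0}^{m-1} u^k v^{m-k},
\]
which contains every term of $w^m$ except $u^m$. Since $u^m \leqslant v^m \leqslant v w^{m-1}$, this gives $w^m = v w^{m-1}$, that is, $w \cdot w^{m-1} = v \cdot w^{m-1}$.

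If $w^{m-1} \neq 0$, cancellation yields $w = v$, i.e.\ $u \leqslant v$. If $w^{m-1} = 0$, then cancellation applied to $w \cdot w^{m-2} = 0 \cdot w^{m-2}$ (possible as long as $w^{m-2} \neq 0$) forces $w = 0$, hence $u = 0 \leqslant v$. A short induction therefore shows that $w^j = 0$ implies $w = 0$. This handles the degenerate case, and I expect it to be the main technical point to state cleanly, since cancellation only applies to nonzero factors.

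Now the main step. Put $s := x^n \plus y^n$ and $t := x^k y^{n-k}$; I want $t \leqslant s$, i.e.\ $t \plus s = s$. Using the key fact with $m = n$, it suffices to show that $(t \plus s)^n \leqslant s^n$ (the reverse is automatic). Since $t \plus s \leqslant (x \plus y)^n$, it is enough to bound the monomials appearing in $(t\plus s)^n$. Raising to the $n$th power, $t^n = (x^n)^k (y^n)^{n-k}$ is a mixed monomial in $a := x^n$ and $b := y^n$ of total degree $n$. Every such monomial is one of the terms of $(a \plus b)^n$, so by Lemma~\ref{lem:bi} it satisfies $a^k b^{n-k} \leqslant (a\plus b)^n = s^n$.

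Expanding $(t \plus s)^n$ by Lemma~\ref{lem:bi} gives terms $t^j s^{n-j}$. Each such term equals $(x^n)^{kj/n}\cdots$, but this is not literally a monomial in $a,b$. So I would instead apply the key fact directly with $m = n$ to the elements $u = t$, $v = s$: I only need $t^n \leqslant s^n$, and that is exactly the bound just shown. Therefore $t \leqslant s$ for every $k$, hence $(x\plus y)^n \leqslant x^n \plus y^n$, and equality follows.
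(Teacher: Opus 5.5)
Your proof is correct. Drop the detour through $(t \plus s)^n$: the argument is simply your key fact applied with $u = t$, $v = s$, $m = n$, since $t^n = a^k b^{n-k}$ is one of the terms of $(a \plus b)^n = s^n$ by Lemma~\ref{lem:bi}. The degenerate case $w^{m-1} = 0$ is handled correctly: cancellativity excludes zero divisors, so $w = 0$ and hence $u = 0$.

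The paper gives no argument of its own here; it only cites Connes--Consani. The usual direct proof is a single cancellation. Every monomial $x^a y^b$ with $a + b = 2n-1$ has $a \geqslant n$ or $b \geqslant n$, so the binomial identity gives
\[
(x \plus y)^{n}\,(x \plus y)^{n-1} = (x \plus y)^{2n-1} = (x^n \plus y^n)\,(x \plus y)^{n-1}.
\]
One then cancels $(x \plus y)^{n-1}$; the case $x \plus y = 0$ is trivial.

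Your route is different. You first prove the order-reflection property $u^m \leqslant v^m \Rightarrow u \leqslant v$, which is the paper's Lemma~\ref{prop:perfore}, there cited from Golan and stated after the Frobenius identity. You do this with the same ``multiply by a power of the sum and cancel'' trick, and then deduce Frobenius term by term by bounding each mixed monomial $x^k y^{n-k}$ by $x^n \plus y^n$.

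What each approach buys:
\begin{itemize}
\item Your version is a bit longer, but it establishes both lemmas at once from cancellation and the binomial identity alone. It also makes explicit that the Frobenius identity is a consequence of order reflection for $n$th powers.
\item The direct identity is shorter and purely equational. It needs no order reflection at all.
\end{itemize}
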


\begin{proof}
See e.g.\ \cite[Lemma~4.3]{Connes10}. 
\end{proof}

\begin{lemma}\label{prop:perfore}
Let $S$ be a $\times$-cancellative, commutative idempotent semiring. 
Let $x, y \in S$. 
If $x^n \leqslant y^n$ for some $n \in \mathbb{N}^*$, then $x \leqslant y$. 
\end{lemma}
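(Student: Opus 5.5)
Since $x \leqslant y$ means $x \plus y = y$, the natural target is the equality $y^n = (x \plus y)^n$, from which $x \plus y = y$ should follow by some form of cancellation. The identity itself comes straight from the Frobenius identity (Lemma~\ref{lem:frobenius}): $(x \plus y)^n = x^n \plus y^n$. The hypothesis $x^n \leqslant y^n$ says exactly that $x^n \plus y^n = y^n$. Combining the two gives $(x \plus y)^n = y^n$.

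Set $z := x \plus y$, so that $y \leqslant z$ and $y^n = z^n$. We have to deduce $y = z$. Two cases are easy. If $n = 1$ there is nothing to prove. If $y = 0$, then $z^n = 0$, and we want $z = 0$. Here I use that a $\times$-cancellative semiring has no zero divisors: if $z \neq 0$ and $z w = 0 = 0 \cdot w$... more directly, $z \cdot z^{n-1} = 0 = z \cdot 0$ gives $z^{n-1} = 0$ by cancellation of $z \neq 0$. Iterating this yields $z = 0$, a contradiction.

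The main step, for $y \neq 0$, is to get rid of the exponent. I would use the binomial identity (Lemma~\ref{lem:bi}), $z^n = (y \plus z)^n$, together with monotonicity of multiplication. Since $y \leqslant z$, we have
\[
y^n \leqslant y^{n-1} z \leqslant y^{n-2} z^2 \leqslant \cdots \leqslant z^n .
\]
Each step is obtained by multiplying $y \leqslant z$ by $y^{k} z^{n-1-k}$, and multiplication preserves $\leqslant$ because it distributes over $\plus$. As the two ends are equal, all the intermediate terms are equal; in particular $y^n = y^{n-1} z$, that is, $y^{n-1} \cdot y = y^{n-1} \cdot z$. Since $y \neq 0$, cancellativity gives $y^{n-1} \neq 0$ (by the absence of zero divisors shown above), and cancelling $y^{n-1}$ gives $y = z$.

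The only delicate point is this chain argument: equality of the extremes forces equality of the middle terms by antisymmetry. Strictly, only the first link $y^n \leqslant y^{n-1} z \leqslant z^n$ is needed, and it is squeezed between the equal ends $y^n = z^n$. The cancellation step then needs the nonzero factor $y^{n-1}$, which is guaranteed by the no-zero-divisors observation. Hence $x \plus y = y$, that is, $x \leqslant y$.
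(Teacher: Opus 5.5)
Your argument is correct. The paper does not prove this lemma itself; it only refers to Golan~\cite[Proposition~2.24]{Golan03}. So your proposal supplies a self-contained proof where the paper has none, and it works as follows:
\begin{itemize}
\item the Frobenius identity (Lemma~\ref{lem:frobenius}) turns the hypothesis into $(x+y)^n = x^n + y^n = y^n$;
\item with $z = x+y \geqslant y$, monotonicity of multiplication squeezes $y^{n-1}z$ between $y^n$ and $z^n = y^n$;
\item cancelling the non-zero factor $y^{n-1}$ then gives $y = z$;
\item the case $y = 0$ is correctly handled by the absence of zero divisors, which you rightly derive from $\times$-cancellativity via $z \cdot z^{n-1} = z \cdot 0$.
\end{itemize}

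What your route buys is that the lemma rests only on Lemma~\ref{lem:frobenius}, monotonicity and cancellation. Since Lemma~\ref{lem:frobenius} is itself only cited, note that its standard proof does not use the present lemma, so nothing is circular. That proof goes via $(x+y)^{2n-1} = (x^n + y^n)(x+y)^{n-1}$, which follows from the binomial identity, and then cancels $(x+y)^{n-1}$.

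Two cosmetic points. First, the appeal to the binomial identity in your main step is superfluous: $y \leqslant z$ and monotonicity alone give $y^n \leqslant y^{n-1} z \leqslant z^n$. Second, the aborted sentence ``if $z \neq 0$ and $zw = 0 = 0 \cdot w$\ldots'' should simply be removed.
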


\begin{proof}
See e.g.\ \cite[Proposition~2.24]{Golan03}. 
\end{proof}

Given an idempotent semifield $\cis$, the set $\cis^* := \cis \setminus \{0\}$ of invertible elements is a \textit{lattice-ordered group} (or \textit{lattice-group} or \textit{$\ell$-group}). 
Lattice-ordered groups have been widely studied in the literature; we refer the reader to the monographs by Birkhoff \cite{Birkhoff67} and Bigard, Keimel, and Wolfenstein \cite{Bigard77}. 

An idempotent semifield $\cis$ inherits from many properties of the lattice-ordered group $\cis^*$. 
It is in particular a lattice and  
\begin{equation}\label{eq:cisinf}
x \wedge y = (x^{-1} \plus y^{-1})^{-1}, 
\end{equation}
for all $x, y \in \cis^*$. 
If $\cis$ is commutative, \eqref{eq:cisinf} reformulates as the \textit{modular identity} 
\begin{equation}\label{eq:modular}
x y = (x \vee y) (x \wedge y), 
\end{equation}
for all $x, y \in \cis$. 
Moreover, the following identities hold: 
\begin{equation}\label{eq:infsupf1}
x (\bigvee A) = \bigvee (x A), 
\end{equation}
for all $x$ and all subsets $A$ with sup, and  
\begin{equation}\label{eq:infsupf}
x (\bigwedge A) = \bigwedge (x A), 
\end{equation}
for all $x$ and all subsets $A$ with inf. 
We also have similar distributivity properties with $\plus$ in place of $\times$:
\begin{equation}\label{eq:infplus1}
x \plus (\bigvee A) = \bigvee (x \plus A), 
\end{equation}
for all $x$ and all \textit{nonempty} subsets $A$ with sup, and  
\begin{equation}\label{eq:infplus}
x \plus (\bigwedge A) = \bigwedge (x \plus A), 
\end{equation}
for all $x$ and all nonempty subsets $A$ with \textit{non-zero} inf, or all \textit{nonempty finite} subsets $A$. 

\begin{lemma}\label{prop:sn}
Let $\cis$ be a commutative idempotent semifield, and let $n \in \mathbb{N}$. 
Then  
\[
(\bigvee A)^n = \bigvee A^n, 
\]
for all subsets $A$ of $\cis$ with sup, and 
\[
(\bigwedge A)^n = \bigwedge A^n, 
\]
for all subsets $A$ of $\cis$ with inf, where $A^n := \{ a^n : a \in A \}$. 
\end{lemma}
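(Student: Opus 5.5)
We prove the statement about sups first and then pass to infs by inversion. The case $n = 0$ is trivial: both sides equal $1$ when $A$ is nonempty. The degenerate case $A = \emptyset$ can also be handled separately. Then $\bigvee \emptyset = 0$, and $0^n = 0$ for $n \geqslant 1$, which is the sup of the empty set $A^n$. Dually, $\bigwedge \emptyset$ exists only if $\cis$ has a top element, which is impossible in a semifield other than $\mathbb{B}$, where the claim is checked directly. So we assume $n \geqslant 1$ and $A$ nonempty.

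For sups, let $s = \bigvee A$. Since $x \mapsto x^n$ is monotone (multiplication is monotone in an idempotent semiring), $s^n$ is an upper bound of $A^n$. Now let $u$ be any upper bound of $A^n$; we must show $s^n \leqslant u$. The key tool is the Frobenius identity (Lemma~\ref{lem:frobenius}), which applies because a commutative semifield is $\times$-cancellative. For $a \in A$ it gives $(a \plus s)^n = a^n \plus s^n$, but this alone does not conclude. The cleaner route is to use Lemma~\ref{prop:perfore} together with distributivity. Apply \eqref{eq:infsupf1} repeatedly: $s^n = s^{n-1}\bigvee A = \bigvee_{a} s^{n-1} a$. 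Iterating, we get $s^n = \bigvee_{a_1,\dots,a_n \in A} a_1 \cdots a_n$, since each step distributes a product over a sup that exists. So it suffices to show $a_1\cdots a_n \leqslant u$ for all $a_i \in A$. This is the main step. Set $m = a_1 \cdots a_n$. By Frobenius applied to $\Plus_i a_i$ and by the binomial-type expansion of $(\Plus_i a_i)^{n}$ (the multinomial analogue of Lemma~\ref{lem:bi}), we have $m \leqslant (\Plus_i a_i)^n = \Plus_i a_i^n \leqslant u$. Here the multinomial Frobenius follows from Lemma~\ref{lem:frobenius} by induction on the number of summands. Hence $s^n \leqslant u$, and $s^n = \bigvee A^n$.

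For infs, suppose $\bigwedge A = t$ exists. If $t \neq 0$, then $A \subseteq \cis^*$. Inversion $x \mapsto x^{-1}$ is an order-reversing bijection of $\cis^*$, as \eqref{eq:cisinf} shows. Hence $A^{-1}$ has sup $t^{-1}$ in $\cis^*$, and this is also its sup in $\cis$ because $A^{-1}$ is nonempty and consists of nonzero elements. By the sup case, $(t^{-1})^n = \bigvee (A^{-1})^n$. Inverting back gives $t^n = \bigwedge A^n$.

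If $t = 0$, we must show that $0$ is the only lower bound of $A^n$. Suppose $0 \neq v \leqslant a^n$ for all $a \in A$. We want to produce a nonzero lower bound of $A$, which gives a contradiction. If $0 \in A$, then $a^n = 0$ forces $v = 0$, so we may assume $0 \notin A$. Fix $a_0 \in A$ and consider $w = v \wedge a_0^n$, which is nonzero since $\cis^*$ is a lattice. We need a nonzero $x$ with $x^n \leqslant w$, and the naive choice of an $n$-th root need not exist. Instead, take $x = w \wedge 1$ when $w \leqslant 1$. Then $x^n \leqslant x$, so this choice does not directly yield $x \leqslant a$.

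Better: put $c = w\wedge 1 \wedge a_0 \neq 0$. We claim $c^n \leqslant a$ implies $c \leqslant a$ in a suitable sense. Since $c \leqslant 1$, we have $c^n \leqslant c$, which goes in the wrong direction. We use Lemma~\ref{prop:perfore} instead, in the form $(c \wedge a)^n = c^n \wedge a^n$, since $\cis^*$ is a distributive $\ell$-group where powers commute with finite meets. We want $c^{n}\wedge a^n \geqslant c^n \wedge v$, which is nonzero. The natural candidate for a lower bound of $A$ is then $c^n$, by choosing $c \leqslant 1$ so that $c^n \leqslant c$. From $c^n \leqslant v \leqslant a^n$ we get $c^{n}\leqslant c^{n-1}a$. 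This handling of the zero-inf case is the step I expect to be the main obstacle; I would look for a nonzero $d$ with $d \leqslant a$ derived from the bounds $c^n \leqslant a^n$ and $c \leqslant 1$.
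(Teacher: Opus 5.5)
Your sup case is correct and is exactly the dual of the paper's argument. Your reduction of a non-zero inf to the sup case, by inversion in $\cis^*$, is also valid. The genuine gap is the case $\bigwedge A = 0$, which you leave open: you never show that every lower bound of $A^n$ is $0$, and the manipulations you try do not close it. In particular, $(c \wedge a)^n = c^n \wedge a^n$ is a dual Frobenius identity, not Lemma~\ref{prop:perfore}, and it does not produce a lower bound of $A$.

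In fact you discarded the right candidate too early. Suppose $\ell \neq 0$ is a lower bound of $A^n$, and set $d := \ell \wedge 1$, which is non-zero by \eqref{eq:cisinf}. For every $a \in A$ you have $d^n \leqslant d \leqslant \ell \leqslant a^n$, so $d \leqslant a$ by Lemma~\ref{prop:perfore}. Thus $d$ is a non-zero lower bound of $A$, contradicting $\bigwedge A = 0$. The inequality $d^n \leqslant d$, which you dismissed as ``the wrong direction'', is exactly what chains with $d \leqslant a^n$ to give $d^n \leqslant a^n$.

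The case distinction and the detour through inversion are also unnecessary; the paper simply dualizes your sup argument. Equation~\eqref{eq:infsupf} holds for every subset with an inf, including when that inf is $0$, because multiplication by a non-zero element is an order-automorphism of $\cis$. Repeated use gives $(\bigwedge A)^n = \bigwedge_{a_1, \ldots, a_n \in A} a_1 \cdots a_n$. For any lower bound $\ell$ of $A^n$ one then has $a_1 \cdots a_n \geqslant (a_1 \wedge \cdots \wedge a_n)^n = a_1^n \wedge \cdots \wedge a_n^n \geqslant \ell$. The middle equality follows from Lemma~\ref{lem:frobenius} via \eqref{eq:cisinf} when all $a_i$ are non-zero, and is trivial otherwise. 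This handles zero and non-zero infs uniformly.
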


\begin{proof}
We only prove the second assertion (the first assertion can be proved along similar lines). 
Let $A$ be a subset of $\cis$ with inf. 
If $n = 0$ the result is clear, so suppose that $n > 0$. 
We have that $(\bigwedge A)^n$ is a lower bound of $A^n$. 
Now let $\ell$ be another lower bound of $A^n$, and let $a_1, \ldots, a_n \in A$. 
Then 
\begin{align*}
a_1 \cdots a_n &\geqslant (a_1 \wedge \cdots \wedge a_n) \cdots (a_1 \wedge \cdots \wedge a_n) \\
&= (a_1 \wedge \cdots \wedge a_n)^n = a_1^n \wedge \cdots \wedge a_n^n \\
&\geqslant \ell \wedge \cdots \wedge \ell = \ell. 
\end{align*}
Using Equation~\eqref{eq:infsupf} repeatedly we deduce that $(\bigwedge A)^n \geqslant \ell$. 
This shows that $(\bigwedge A)^n$ is the greatest lower bound of $A^n$, i.e.\ its inf. 
\end{proof}

The following lemma is a slight generalization of Litvinov et al.\ \cite[Lemma~5.1]{Litvinov01}. 

\begin{lemma}\label{lem:inf0bis}
Let $\cis$ be an idempotent semifield. 
Then $\cis^*$ has an inf, and it satisfies $\bigwedge \cis^* = 0$ if and only if $\cis \neq \mathbb{B}$. 
\end{lemma}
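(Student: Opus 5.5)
We must show that $\cis^*$ has an inf, and that this inf is $0$ exactly when $\cis \neq \mathbb{B}$. The plan is a case distinction according to whether $\cis$ has a non-zero element different from $1$.

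First, suppose $\cis = \mathbb{B}$. Then $\cis^* = \{1\}$, so its inf exists and equals $1 \neq 0$. This disposes of one direction and of the existence question in this case.

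Now suppose $\cis \neq \mathbb{B}$, so that $\cis^*$ contains some $a \neq 1$. Since $0$ is the least element of $\cis$, it is a lower bound of $\cis^*$. It therefore suffices to show that every lower bound $\ell$ of $\cis^*$ equals $0$; this proves both that the inf exists and that it is $0$. Suppose for contradiction that $\ell \neq 0$ is a lower bound of $\cis^*$.

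1. Produce an element $b \in \cis^*$ with $b < 1$ strictly. Take $b := a \wedge 1 = (a^{-1} \plus 1)^{-1}$, which is non-zero and lies in $\cis^*$ by \eqref{eq:cisinf}. Then $b \leqslant 1$. If $b = 1$, then $a \geqslant 1$ with $a \neq 1$; in that case use $a^{-1}$ instead, which is $\leqslant 1$ and different from $1$ because inversion reverses the order in the $\ell$-group $\cis^*$. Either way we obtain $b < 1$ in $\cis^*$.

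2. Observe that $\ell$ bounds $\ell b \in \cis^*$, so $\ell \leqslant \ell b$. Multiplying by $\ell^{-1}$, which preserves the order, gives $1 \leqslant b$. Hence $b = 1$, contradicting Step 1.

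Therefore no non-zero lower bound exists, and $\bigwedge \cis^* = 0$. Conversely, if $\bigwedge \cis^* = 0$, then $\cis \neq \mathbb{B}$, by the first case.

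The only delicate point is Step 1, namely securing an invertible element strictly below $1$. Note that commutativity is not needed: we multiply by $\ell^{-1}$ on the left, and the order is compatible with multiplication on both sides by distributivity. So the main thing to verify carefully is that inversion is order-reversing on $\cis^*$. This follows since $x \leqslant y$ implies $y^{-1} = y^{-1}(x \plus y)x^{-1} = y^{-1} \plus x^{-1}$.
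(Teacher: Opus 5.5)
Your proof is correct and follows essentially the paper's route: both arguments combine an invertible element strictly below $1$ (obtained by inverting an element above $1$ if necessary) with translation invariance of the order. The paper uses $\ell$ itself, after showing $\ell<1$, and gets $\ell^2<\ell$; you instead use $\ell\leqslant \ell b$ to force $1\leqslant b$.

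One slip is in your closing remark on inversion. For $x\leqslant y$ one has $y^{-1}(x+y)x^{-1}=y^{-1}yx^{-1}=x^{-1}$, not $y^{-1}$, so the identity should read $x^{-1}=y^{-1}+x^{-1}$. That gives the desired $y^{-1}\leqslant x^{-1}$, whereas your version as written would give the opposite inequality.
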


\begin{proof}
Assume that $\cis \neq \mathbb{B}$, and let $t \in \cis^*, t \neq 1$. 
Let $\ell$ be a lower bound of $\cis^*$. 
If $\ell = 1$, then $t > 1$ by definition of $\ell$, hence $\cis^* \ni t^{-1} < 1 = \ell$, a contradiction. 
Thus, $\ell \neq 1$. 
Since $1 \in \cis^*$, we have $\ell < 1$ by definition of $\ell$. 
If $\ell \neq 0$, then $\ell^2 \in \cis^*$ and $\ell^2 < \ell$, a contradiction. 
We conclude that $\ell = 0$, which proves that $0$ is the inf of $\cis^*$. 
\end{proof}




\section{Polynomials and polynomial functions}\label{sec:poly}

In this section, we recall the definition of a polynomial with coefficients in an idempotent semifield $\cis$. 
Thanks to a result due to Crosby, we prove that the set $\cis[X]$ of polynomials over $\cis$ has the structure of a residuated idempotent semiring without zero divisors. 
We then consider polynomial functions and show that they follow a series of interesting properties. 
Notably, every polynomial function 
\begin{itemize}
  \item preserves arbitrary sups of nonempty subsets;
  \item preserves arbitrary non-zero infs of nonempty subsets;
  \item is a lattice morphism, so is additive in particular, and
  \item is an order-embedding (hence is injective) if it maps $0$ to $0$. 
\end{itemize}

\subsection{Disclaimer}

In the remaining part of this paper, all idempotent semirings and idempotent semifields considered are supposed to be commutative and infinite (hence different from $\mathbb{B}$). 
The letter $\cis$ will customarily denote a (commutative, infinite) idempotent semifield. 

\subsection{Definitions}

A \textit{formal polynomial}, or \textit{polynomial} for short, with coefficients in $\cis$ is a sequence $p = (p_0, p_1, \ldots)$ of elements of $\cis$ that is eventually zero. 
The \textit{support} $\supp p$ of $p$ is the (finite) set of all $j$ such that $p_j \neq 0$. 
The \textit{degree} $\deg p$ (resp.\ the \textit{valuation} $\val p$) of $p$ is the maximum element (resp.\ minimum element) of $\supp p$, with the conventions $\deg 0 = -\infty$ and $\val 0 = \infty$. 
The addition $p \plus q = p \vee q$ of two polynomials $p$ and $q$ is defined by $(p \plus q)_j = p_j \plus q_j$, and the multiplication $p \cdot q$ is given by the Cauchy product $(p \cdot q)_j = \Plus_{i=0}^j p_i q_{j-i}$. 

If we let $X$ denote the polynomial $(0, 1, 0, 0, \ldots)$, then every polynomial $p$ can be written as 
\[
p = \Plus_{j \in \mathbb{N}} p_j X^j. 
\]
We shall often use $p(X)$ as a synonym for $p$. 
The set $\cis[X]$ of polynomials with coefficients in $\cis$ has the structure of an idempotent semiring. 
This is also a lattice, the inf $p \wedge q$ of two polynomials $p, q$ satisfying $(p \wedge q)_j = p_j \wedge q_j$ for all $j$, and it is easily verified that the multiplication distributes over $\wedge$. 
Recall from \cite[Remark~3]{Castella10} that the multiplication in $\cis[X]$ is never cancellative, since, for instance,  
\[
(X \plus 1) (X^2 \plus 1) = (X \plus 1) (X^2 \plus X \plus 1). 
\]
In particular, $\cis[X]$ cannot be embedded into an idempotent semifield of fractions. 
Note also that we have the following submodular identity in $\cis[X]$: 
\[
(p \vee q) (p \wedge q) \leqslant p q. 
\]
However, equality does not hold in general (e.g., take $p = X$ and $q = 1$). 

\begin{remark}
The more general setting of polynomials over \textit{semirings} has been studied in the literature, see e.g.\ Hebisch and Weinert \cite{Hebisch98}, Nasehpour \cite{Nasehpour25}, Akian et al.\ \cite{Akian25}.
\end{remark}

\subsection{Derivative of a polynomial}

Let $p \in \cis[X]$. 
The \textit{derivative} of $p$ is the polynomial $p'$ in $\cis[X]$ defined by 
\[
p'(X) = \Plus_{j \geqslant 1} p_j X^{j-1} = \Plus_{j \geqslant 0} p_{j+1} X^{j}. 
\]
In other words, $p'_j = p_{j+1}$, for all $j \in \mathbb{N}$. 
Moreover, 
\[
p(X) = { X p'(X) + p_0 }.
\]
We shall sometimes write $p^{(1)}$ for the derivative $p'$ of $p$, and $p^{(i)}$ for the derivative of $p^{(i-1)}$. 

\begin{proposition}\label{prop:der}
Let $p, q \in \cis[X]$, and let $\lambda \in \cis$.  
Then 
\begin{itemize}
	\item $(\lambda p)' = \lambda p'$, 
	\item $(p \plus q)' = p' \plus q'$, 
	\item $(p \cdot q)' = (p' \cdot q) \plus (p \cdot q')$, 
\end{itemize}
\end{proposition}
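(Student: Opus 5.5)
The plan is to verify each identity coefficientwise, using the defining relation $p'_j = p_{j+1}$ for all $j \in \mathbb{N}$.

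The first two items are immediate. For $j \in \mathbb{N}$ we have $(\lambda p)'_j = (\lambda p)_{j+1} = \lambda p_{j+1} = \lambda p'_j$. Similarly, $(p \plus q)'_j = p_{j+1} \plus q_{j+1} = p'_j \plus q'_j$. Both follow from the fact that scalar multiplication and addition in $\cis[X]$ are defined coefficientwise.

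For the product rule I compute both sides at index $j$ using the Cauchy product. On the left,
\[
(p q)'_j = (pq)_{j+1} = \Plus_{i=0}^{j+1} p_i q_{j+1-i}.
\]
On the right,
\[
(p' q)_j = \Plus_{i=0}^{j} p_{i+1} q_{j-i} = \Plus_{i=1}^{j+1} p_i q_{j+1-i},
\]
after reindexing, and
\[
(p q')_j = \Plus_{i=0}^{j} p_i q_{j+1-i}.
\]
The sum $(p'q)_j \plus (pq')_j$ is therefore the sup of two families of terms $p_i q_{j+1-i}$: one over $1 \leqslant i \leqslant j+1$ and one over $0 \leqslant i \leqslant j$. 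Their union is exactly the index range $0 \leqslant i \leqslant j+1$. Terms with $1 \leqslant i \leqslant j$ appear in both families, and by idempotency ($x \plus x = x$) they collapse to a single copy. Hence
\[
(p'q)_j \plus (pq')_j = \Plus_{i=0}^{j+1} p_i q_{j+1-i} = (pq)'_j .
\]

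The only point needing care is this last step. Idempotency is essential here: in an ordinary ring the doubled middle terms would produce binomial-type multiplicities, which is why the usual Leibniz rule involves $j+1$ rather than a single copy. Commutativity and associativity of $\plus$ justify merging the two finite sums into one. No further obstacle is expected.

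As a sanity check, one can also derive the product rule structurally. From $p = X p' \plus p_0$ and $q = X q' \plus q_0$,
\[
pq = X^2 p'q' \plus X(p_0 q' \plus q_0 p') \plus p_0 q_0 .
\]
Differentiating term by term gives $X p'q' \plus p_0 q' \plus q_0 p'$. Expanding $p'q \plus pq'$ gives $2 X p'q' \plus q_0 p' \plus p_0 q'$, and the two agree because $2 X p'q' = X p'q'$ by idempotency. I would present the coefficientwise argument as the main proof.
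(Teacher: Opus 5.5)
Your proposal is correct. The paper omits this proof as easy and leaves it to the reader, and your coefficientwise verification is exactly the routine argument intended: you write $(p'q)_j$ and $(pq')_j$ as sups of the terms $p_i q_{j+1-i}$ over $1 \leqslant i \leqslant j+1$ and $0 \leqslant i \leqslant j$ respectively, then use idempotency to merge them into $(pq)_{j+1}$; your structural cross-check via $p = X p' \plus p_0$ is also valid.
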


\begin{proof}
The easy proof is left to the reader. 
\end{proof}

\subsection{Polynomial division}

We recall a result due to Crosby \cite[Theorem~5.2.3]{Crosby95}, who examined polynomials with coefficients in an idempotent semifield for applications to image processing and mathematical morphology. 

\begin{theorem}[Crosby]\label{thm:crosby}
Let $p, q \in \cis[X]$ with $q \neq 0$. 
Then there exists a polynomial in $\cis[X]$, denoted by $p / q$ and called the quotient of $p$ by $q$, such that 
\[
{ f \cdot q \leqslant p } \Leftrightarrow { f \leqslant p / q },
\]
for all $f \in \cis[X]$. 
Moreover, we have 
\begin{equation}\label{eq:crosby}
p / q = \bigwedge_{i \in \supp q} q_i^{-1} p_{}^{(i)}. 
\end{equation}
\end{theorem}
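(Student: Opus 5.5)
The strategy is to verify directly that the polynomial $r := \bigwedge_{i \in \supp q} q_i^{-1} p^{(i)}$ is well defined and satisfies the residuation property. Once that holds, $r$ is the greatest $f$ with $f \cdot q \leqslant p$, which justifies the notation $p/q$. First, $r$ is a finite meet of polynomials in the lattice $\cis[X]$, taken coordinatewise: $r_j = \bigwedge_{i \in \supp q} q_i^{-1} p_{i+j}$. The index set is nonempty because $q \neq 0$. For $j > \deg p$ some (indeed every) term vanishes, so $r$ is eventually zero and $r \in \cis[X]$. The iterated derivative satisfies $p^{(i)}_j = p_{i+j}$ by induction from $p'_j = p_{j+1}$.

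The core is the coefficientwise characterization. The inequality $f \cdot q \leqslant p$ means $\Plus_{i=0}^{k} f_{k-i} q_i \leqslant p_k$ for every $k$. Since a sum is a sup, this is equivalent to $f_{k-i} q_i \leqslant p_k$ for all $k$ and all $0 \leqslant i \leqslant k$. Terms with $q_i = 0$ impose nothing, so we may restrict to $i \in \supp q$. Reindexing with $j = k - i$, the condition reads: for all $j \in \mathbb{N}$ and all $i \in \supp q$, $f_j q_i \leqslant p_{i+j}$.

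Because $q_i$ is invertible for $i \in \supp q$ and multiplication by an invertible element is an order automorphism, the condition is equivalent to $f_j \leqslant q_i^{-1} p_{i+j}$. Taking the finite inf over $i \in \supp q$, it becomes $f_j \leqslant r_j$ for all $j$, that is, $f \leqslant r$, since the order on $\cis[X]$ is coordinatewise. This gives $f \cdot q \leqslant p \Leftrightarrow f \leqslant r$, so we may set $p/q := r$, which yields \eqref{eq:crosby}.

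I do not expect a serious obstacle. The only points needing care are these:
\begin{itemize}
\item The order on $\cis[X]$ induced by $+$ is coordinatewise, which holds because addition is coordinatewise.
\item The splitting of a finite sum inequality into termwise inequalities is valid because $+$ is the join.
\item Restricting to $\supp q$ does not change anything when some $q_i = 0$.
\end{itemize}
No commutativity beyond the standing assumption, and no infinite infima, are needed.
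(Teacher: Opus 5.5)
Your proof is correct, but it follows a different route from the paper's. You unfold the Cauchy product and use that $+$ is the join. This gives a single chain of equivalences:
$f\cdot q\leqslant p$ holds if and only if $f_j q_i\leqslant p_{i+j}$ for all $j$ and all $i\in\supp q$, which holds if and only if $f_j\leqslant \bigwedge_{i\in\supp q} q_i^{-1}p_{i+j}$ for all $j$.
This proves both directions at once and makes the coefficient formula $(p/q)_j=\bigwedge_{i\in\supp q}q_i^{-1}p_{i+j}$ explicit. You also check the well-definedness points (nonempty index set, eventual vanishing), which the paper leaves implicit.

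The paper never goes down to coefficients. For the forward direction, it passes from $f\cdot q\leqslant p$ to $f\cdot q^{(i)}\leqslant p^{(i)}$ and then uses $q_i\leqslant q^{(i)}$. That first step tacitly relies on $f\cdot q^{(i)}\leqslant (f\cdot q)^{(i)}$, which follows from the Leibniz rule of Proposition~\ref{prop:der}. For the converse, it runs a downward induction from $i=\deg q$ to $i=0$, based on the Horner-type identities $q^{(i-1)}(X)=Xq^{(i)}(X)+q_{i-1}$ and $p^{(i-1)}(X)=Xp^{(i)}(X)+p_{i-1}$.

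Your version is shorter, more elementary, and self-contained. The paper's version stays within the derivative calculus it reuses later, for instance in Lemma~\ref{lem:quotient}, where $p/q$ is treated as an inf of rescaled derivatives of a closed polynomial. Both arguments establish the same statement.
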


\begin{proof}
If $f \cdot q \leqslant p$, then $f \cdot q^{(i)} \leqslant p^{(i)}$, for all $i \in \{ 0, \ldots, \deg q \}$. 
Since $q_i \leqslant q^{(i)}$, this implies $f \leqslant q_{i}^{-1} p_{}^{(i)}$, for all $i \in \supp q$. 

Conversely, assume that $f \leqslant q_{i}^{-1} p_{}^{(i)}$, for all $i \in \supp q$. 
This implies in particular that $f \cdot q^{(i)} \leqslant p^{(i)}$ for $i = \deg q$. 
Now if  $f \cdot q^{(i)} \leqslant p^{(i)}$ for some $i > 0$, then 
\begin{align*}
f(X) \cdot q^{(i-1)}(X) &= f(X) (X q^{(i)}(X) \plus q_{i-1}) \\
&\leqslant X p^{(i)}(X) \plus p_{}^{(i-1)}(X) = p^{(i-1)}(X).
\end{align*}
By induction we get $f \cdot q^{(i)} \leqslant p^{(i)}$, for all $i \in \{ 0, \ldots, \deg q \}$. 
For $i = 0$ we obtain $f \cdot q \leqslant p$, as required.
\end{proof}

\begin{remark}
The definition of $p/q$ shows that $\deg (p/q) \leqslant \deg p - \deg q$. 
However, equality does not hold in general. 
For instance, if $p(X) = X^2 + 1$ and $q(X) = X + 1$, then $(p/q)(X) = 0$. 
\end{remark}

\begin{corollary}[Crosby]\label{coro:crosby}
Let $p, q \in \cis[X]$ with $q \neq 0$. 
Then $q$ is a factor of $p$ if and only if $p = (p/q) \cdot q$. 
\end{corollary}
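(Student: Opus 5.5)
We prove the two implications separately, relying only on the residuation property in Theorem~\ref{thm:crosby}.

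For the direction ``$\Leftarrow$'', assume $p = (p/q) \cdot q$. Then $q$ is a factor of $p$ by definition, with cofactor $f := p/q \in \cis[X]$. Nothing else is needed.

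For the direction ``$\Rightarrow$'', assume $q$ is a factor of $p$. This means there exists $f \in \cis[X]$ with $p = f \cdot q$. The plan is to sandwich $(p/q)\cdot q$ between $p$ and itself.

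First, $f \cdot q = p \leqslant p$. By Theorem~\ref{thm:crosby} this gives $f \leqslant p/q$. Since multiplication in the idempotent semiring $\cis[X]$ is monotone (it distributes over $\plus = \vee$), we obtain $p = f\cdot q \leqslant (p/q)\cdot q$.

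Second, apply Theorem~\ref{thm:crosby} with $f := p/q$. The trivial inequality $p/q \leqslant p/q$ yields $(p/q)\cdot q \leqslant p$.

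By antisymmetry of the order on $\cis[X]$, these two inequalities give $p = (p/q)\cdot q$.

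The only point needing a word of justification is monotonicity of multiplication. It follows because $g \leqslant h$ means $g \plus h = h$, so $gq \plus hq = (g\plus h)q = hq$, i.e.\ $gq \leqslant hq$. There is no genuine obstacle beyond this.
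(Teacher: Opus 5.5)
Your proposal is correct and matches the paper's proof: from $p = f \cdot q$ you get $f \leqslant p/q$ by residuation, hence $p = f\cdot q \leqslant (p/q)\cdot q \leqslant p$, and the converse is immediate. The only difference is that you spell out why multiplication is monotone, which the paper leaves implicit.
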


\begin{proof}
Assume that $p = r \cdot q$, for some $r \in \cis[X]$. 
Then $r \leqslant p / q$, so $p = r \cdot q \leqslant (p/q) \cdot q \leqslant p$. 
This shows that $p = (p/q) \cdot q$. 
The converse statement is clear. 
\end{proof}

Given a poset $P$, recall that a map $f : P \rightarrow P$ is \textit{residuated} (see e.g.\ Blyth and Janowitz \cite[p.~11]{Blyth72}) if there exists some (necessarily unique) map $f^{\#}$ satisfying 
\[
f(x) \leqslant y \Leftrightarrow x \leqslant f^{\#}(y),
\]
for all $x, y \in P$. 
An idempotent semiring $S$ is \textit{residuated} if, for all $z \in S$ with $z \neq 0$, the map $f_z : S \to S, x \mapsto x z$ is residuated. 
The element $f_z^{\#}(y)$ is denoted by $y / z$, and we have by definition
\[
x z \leqslant y \Leftrightarrow x \leqslant y / z,
\]
for all $x, y, z \in S$ with $z \neq 0$. 

\begin{corollary}
The set $\cis[X]$ has the structure of a residuated idempotent semiring with no zero divisors. 
\end{corollary}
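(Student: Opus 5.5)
The corollary has two parts: residuation and the absence of zero divisors. I will treat them separately.

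For residuation, I would take a nonzero $q \in \cis[X]$ and consider the map $f_q : \cis[X] \to \cis[X]$, $f \mapsto f \cdot q$. Theorem~\ref{thm:crosby} supplies the polynomial $p/q$ with $f \cdot q \leqslant p \Leftrightarrow f \leqslant p/q$ for all $f$. So $f_q$ is residuated, with $f_q^{\#}(p) = p/q$ given by \eqref{eq:crosby}. Since $\cis[X]$ is commutative, the right and left versions of residuation coincide, and nothing more is needed here. We already know $\cis[X]$ is a commutative idempotent semiring.

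For the absence of zero divisors, I would take nonzero $p, q \in \cis[X]$ and set $m := \deg p$ and $n := \deg q$, both finite. I would then look at the top coefficient of the product:
\[
(p \cdot q)_{m+n} = \Plus_{i+j = m+n} p_i q_j = p_m q_n ,
\]
because every other term has $i > m$ or $j > n$ and so vanishes. Since $\cis$ is a semifield and $p_m, q_n \neq 0$, both are invertible, so $p_m q_n$ is invertible and hence nonzero. Therefore $p \cdot q \neq 0$; in fact $\deg(p \cdot q) = \deg p + \deg q$.

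Neither step is a real obstacle. The only point to watch is that nonvanishing of the leading coefficient relies on invertibility in $\cis$, not on cancellativity in $\cis[X]$, which fails as noted above. No idempotency argument or cancellation of sums is involved, because the top coefficient of the product consists of a single term.
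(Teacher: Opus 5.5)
Your proof is correct. The residuation part is the paper's argument (Crosby's theorem), but you handle the absence of zero divisors by a different route.

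The paper stays within the residuation framework. If $p \cdot q = 0$ with $q \neq 0$, then $p \leqslant 0/q$. By \eqref{eq:crosby}, $0/q = \bigwedge_{i \in \supp q} q_i^{-1} 0^{(i)} = 0$, so $p = 0$. This needs the explicit formula for the quotient, not just residuation: in an arbitrary residuated idempotent semiring, $0/q$ need not vanish.

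Your leading-coefficient computation $(p \cdot q)_{m+n} = p_m q_n \neq 0$ is more elementary. It uses only that $\cis$ has no zero divisors, and needs no idempotency, residuation or quotient formula. So it works verbatim over any semiring without zero divisors. It also gives $\deg(p \cdot q) = \deg p + \deg q$, which the paper uses implicitly later, e.g.\ to conclude that $p/q$ has degree $n-k$ in the completion of baskets of roots.

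The paper's version, in turn, justifies calling the statement a corollary of Crosby's theorem: the formula for the residual already encodes the integrality of $\cis[X]$.
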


\begin{proof}
The fact that $\cis[X]$ is a residuated semiring is given by Crosby's theorem. 
Let us show that $\cis[X]$ has no zero divisors. 
To this end, let $p, q \in \cis[X]$ with $p \cdot q = 0$ and $q \neq 0$. 
Then $p \leqslant { 0 / q }$, and $0 / q$ is zero by Equation~\eqref{eq:crosby}, so $p = 0$. 
\end{proof}

\subsection{Polynomial functions}

To every polynomial $p \in \cis[X]$ we can associate the \textit{polynomial function} $\widehat{p} : \cis \rightarrow \cis$ defined by 
\[
\widehat{p}(x) = \Plus_{j \in \mathbb{N}} p_j x^j,  
\]
for all $x \in \cis$. 
The notation $\widehat{p}$ is borrowed from Baccelli et al.\ \cite{Baccelli92}. 
The mapping $p \mapsto \widehat{p}$ is a semiring morphism from $\cis[X]$ to the semiring of polynomial functions \cite[Lemma~3.35]{Baccelli92}. 
Note that this mapping is not injective in general, so there is no one-to-one correspondence between polynomials and polynomial functions; for instance, the polynomial functions associated with the polynomials $X^2 \plus 1$ and $(X + 1)^2$ coincide by Lemma~\ref{lem:frobenius}. 

Every polynomial function $\widehat{p}$ 
satisfies the following \textit{convexity property}. 

\begin{proposition}\label{prop:convex}
Let $p \in \cis[X]$. 
Then we have 
\begin{align*}
\widehat{p}(z)^{m_1 + m_2} \leqslant \widehat{p}(x)^{m_1} \widehat{p}(y)^{m_2}, 
\end{align*}
for all $x, y, z \in \cis$ and $m_1, m_2 \in \mathbb{N}$ such that $z^{m_1 + m_2} = x^{m_1} y^{m_2}$. 
\end{proposition}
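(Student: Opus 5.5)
We expand both sides with the binomial identity and compare monomial by monomial. Write $m := m_1 + m_2$. By the binomial identity (Lemma~\ref{lem:bi}), extended in the obvious way to finite sums of more than two terms by induction, $\widehat{p}(z)^{m}$ is the sum over all multisets of $m$ indices $j_1, \ldots, j_m$ in $\supp p$ of the monomials $p_{j_1} \cdots p_{j_m} z^{j_1 + \cdots + j_m}$. The Frobenius identity (Lemma~\ref{lem:frobenius}) gives something simpler still: since $\cis$ is $\times$-cancellative (every nonzero element is invertible),
\[
\widehat{p}(z)^m = \Plus_j p_j^m z^{jm}.
\]
So it suffices to show that $p_j^m z^{jm} \leqslant \widehat{p}(x)^{m_1} \widehat{p}(y)^{m_2}$ for each $j$.

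Fix $j$. By the hypothesis $z^{m} = x^{m_1} y^{m_2}$, we get
\[
p_j^m z^{jm} = p_j^m (z^m)^j = p_j^m x^{j m_1} y^{j m_2} = (p_j x^j)^{m_1} (p_j y^j)^{m_2}.
\]
Now $p_j x^j \leqslant \widehat{p}(x)$ and $p_j y^j \leqslant \widehat{p}(y)$. Multiplication is order-preserving in an idempotent semiring, since $a \leqslant b$ means $a + b = b$, whence $ac + bc = bc$. Taking powers and products therefore yields $(p_j x^j)^{m_1} (p_j y^j)^{m_2} \leqslant \widehat{p}(x)^{m_1} \widehat{p}(y)^{m_2}$. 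Summing over $j$, that is, taking the sup of these finitely many inequalities, gives the claim.

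Edge cases need a little care. If $m_1 = m_2 = 0$, both sides equal $1$. Zero values of $x$, $y$, $z$ cause no problem, because the argument uses only monotonicity and the identity $z^{jm} = (z^m)^j$; no inversion is involved. The only genuinely nontrivial input is the Frobenius identity, and even that is dispensable. Without it, each cross term $p_{j_1} \cdots p_{j_m} z^{j_1 + \cdots + j_m}$ of the binomial expansion could instead be bounded, using the modular-type inequality $ab \leqslant (a \vee b)^2$ iterated, by $\bigvee_i (p_{j_i} z^{j_i})^m$. This reduces again to the diagonal terms $p_j^m z^{jm}$ treated above. I therefore expect no real obstacle. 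The only point to check is that the Frobenius identity applies, which it does because $\cis$ is a commutative semifield and hence $\times$-cancellative on nonzero elements, exactly as required by Lemma~\ref{lem:frobenius}.
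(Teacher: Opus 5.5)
Your proof is correct and is essentially the paper's. The paper also uses the Frobenius identity to rewrite $\widehat{p}(z)^{m_1+m_2}$ as $\Plus_j (p_j x^j)^{m_1}(p_j y^j)^{m_2}$, then bounds this by $\widehat{p}(x)^{m_1}\widehat{p}(y)^{m_2}$ by monotonicity (phrased there as ``sum of products $\leqslant$ product of sums''), which is your termwise comparison.

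You should drop your closing aside that Frobenius is dispensable, because it is circular. Iterating $ab \leqslant (a \vee b)^2$ only gives $a_1\cdots a_m \leqslant (\bigvee_i a_i)^m$, which is where you started. The further step $(\bigvee_i a_i)^m \leqslant \bigvee_i a_i^m$ that you would need is exactly the Frobenius identity.
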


\begin{proof}
Using twice the Frobenius identity of Lemma~\ref{lem:frobenius}, we have 
\begin{align*}
\widehat{p}(z)^{m_1 + m_2} &= \Plus_{j \in \mathbb{N}} p_j^{m_1 + m_2} (z^{m_1+m_2})^j = \Plus_{j \in \mathbb{N}} (p_j x^j)^{m_1} (p_j y^j)^{m_2} \\
&\leqslant (\Plus_{j \in \mathbb{N}} (p_j x^j)^{m_1}) (\Plus_{j \in \mathbb{N}} (p_j y^j)^{m_2}) \\
&\leqslant (\Plus_{j \in \mathbb{N}} p_j x^j)^{m_1} (\Plus_{j \in \mathbb{N}} p_j y^j)^{m_2} = \widehat{p}(x)^{m_1} \widehat{p}(y)^{m_2}, 
\end{align*}
as required.
\end{proof}

\begin{proposition}\label{prop:infcom}
Let $p \in \cis[X]$. 
Then 
\[
\widehat{p}(\bigvee A) = \bigvee \widehat{p}(A), 
\]
for all nonempty subsets $A$ with sup, and 
\[
\widehat{p}(\bigwedge A) = \bigwedge \widehat{p}(A), 
\]
for all nonempty subsets $A$ with non-zero inf. 
If $p_0 = 0$ or $A$ is finite, then the condition $\bigwedge A \neq 0$ can be removed. 
In particular, $\widehat{p}$ is always a lattice morphism, in the sense that 
\begin{align*}
\widehat{p}(x \plus y) &= \widehat{p}(x) \plus \widehat{p}(y), \\
\widehat{p}(x \wedge y) &= \widehat{p}(x) \wedge \widehat{p}(y), 
\end{align*}
for all $x, y \in \cis$. 
\end{proposition}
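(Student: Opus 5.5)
We reduce everything to the monomial case and then pass the finite sum through the sup or inf using the distributivity identities of Section~\ref{sec:semirings}. Write $\widehat{p}(x) = \Plus_{j \in \supp p} p_j x^j$, a finite sum over $J := \supp p$.

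\emph{Sups.} Let $A$ be nonempty with sup $s = \bigvee A$. For each $j$, Lemma~\ref{prop:sn} gives $s^j = \bigvee A^j$, and Equation~\eqref{eq:infsupf1} gives $p_j s^j = \bigvee_{a \in A} p_j a^j$. (For $j = 0$ this is trivial since $A$ is nonempty.) A finite sum of sups is the sup of the sums, by associativity of sups: $\bigvee_a u_a \plus \bigvee_a v_a = \bigvee_{a,b}(u_a \plus v_b)$. Because each $a \mapsto p_j a^j$ is monotone and $A$ is directed only up to finite sups, we compare $\bigvee_{a,b}(u_a\plus v_b)$ with $\bigvee_a(u_a\plus v_a)$. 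Clearly $\bigvee_a \widehat{p}(a) \leqslant \widehat{p}(s)$ by monotonicity. Conversely, each term $p_j s^j = \bigvee_a p_j a^j \leqslant \bigvee_a \widehat{p}(a)$, so summing over $j$ gives $\widehat{p}(s) \leqslant \bigvee \widehat{p}(A)$. So the sup case is immediate termwise.

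\emph{Infs.} Let $A$ be nonempty with inf $m = \bigwedge A \neq 0$. By Lemma~\ref{prop:sn} and Equation~\eqref{eq:infsupf}, $p_j m^j = \bigwedge_{a} p_j a^j$ for each $j$. Monotonicity gives $\widehat{p}(m) \leqslant \bigwedge \widehat{p}(A)$. The converse is the main obstacle, since sums do not commute with infs in general. We prove it by induction on $|J|$, using Equation~\eqref{eq:infplus}: $x \plus \bigwedge B = \bigwedge (x \plus B)$ for nonempty $B$ with non-zero inf. Let $\ell$ be a lower bound of $\widehat{p}(A)$. Splitting $\widehat{p} = u \plus v$ with $u(x) = p_{j_0} x^{j_0}$, we have $\ell \leqslant u(a) \plus v(b)$ for \emph{all} $a,b \in A$, because $\ell \leqslant \widehat{p}(a \wedge b) \leqslant u(a)\plus v(b)$. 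Here $a \wedge b \in$ a set we need to be in $A$, so we first replace $A$ by its closure $A'$ under finite infs. This set has the same inf $m$, and the lower bounds of $\widehat{p}(A')$ and $\widehat{p}(A)$ agree, by the finite case. Fixing $a$ and taking the inf over $b$, the induction hypothesis for $v$ gives $\bigwedge_b v(b) = v(m)$, which is non-zero when $v$ has a term of positive degree, because $m \neq 0$. If $v$ is just the constant $p_0$, it is non-zero or trivial. Equation~\eqref{eq:infplus} then yields $\ell \leqslant u(a) \plus v(m)$ for all $a$. Repeating in $a$ gives $\ell \leqslant u(m) \plus v(m) = \widehat{p}(m)$.

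The finite case needs $\widehat{p}(x\wedge y) = \widehat{p}(x)\wedge\widehat{p}(y)$, so we prove it first. Each monomial preserves binary infs by Lemma~\ref{prop:sn} and Equation~\eqref{eq:infsupf}; this covers the case of zero too, via the modular identity Equation~\eqref{eq:modular}. Sums of maps preserving binary infs preserve them, by distributivity of the lattice $\cis$, which Equation~\eqref{eq:infplus} for finite sets provides: $(u\wedge u')\plus(v\wedge v') = (u\plus v)\wedge(u\plus v')\wedge(u'\plus v)\wedge(u'\plus v')$. This equals $(u\plus v)\wedge(u'\plus v')$ here, since $u\plus v' \geqslant$ the monotone comparison gives $\widehat{p}(x\wedge y)$ below all four terms and above $\widehat{p}(x)\wedge \widehat{p}(y)$ by monotonicity. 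Finite sets then follow by iteration. If $p_0 = 0$, the case $m = 0$ is handled directly: $\widehat{p}(0)=0$, and $\bigwedge \widehat{p}(A) \leqslant p_j \bigwedge A^j$ for $j=\min J \geqslant 1$, which equals $0$. The lattice-morphism assertions are the two-element special cases.
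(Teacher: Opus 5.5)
Your sup argument is correct, and your induction through \eqref{eq:infplus} handles non-zero infs, but only once the binary identity $\widehat{p}(x\wedge y)=\widehat{p}(x)\wedge\widehat{p}(y)$ is available. That identity is where your proof breaks, and the whole inf half rests on it. After distributing (with $\widehat{p}=u\plus v$), you must discard the cross terms $u(x)\plus v(y)$ and $u(y)\plus v(x)$, i.e.\ show that they dominate $\widehat{p}(x)\wedge\widehat{p}(y)$. You justify this by saying that $\widehat{p}(x\wedge y)$ lies above $\widehat{p}(x)\wedge\widehat{p}(y)$ by monotonicity. But monotonicity only gives $\widehat{p}(x\wedge y)\leqslant\widehat{p}(x)\wedge\widehat{p}(y)$, and the reverse inequality is the very claim, so the argument is circular.

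The general principle you invoke, that sums of maps preserving binary infs preserve binary infs, is also false in general. Take the idempotent semifield $\mathbb{R}^2\cup\{-\infty\}$, with componentwise $\max$ as addition, componentwise $+$ as multiplication and $-\infty$ as zero. The maps $(x_1,x_2)\mapsto(x_1,x_1)$ and $(x_1,x_2)\mapsto(x_2,x_2)$ (both fixing $-\infty$) preserve binary infs. Yet their sum sends both $(2,1)$ and $(1,2)$ to $(2,2)$, and sends their inf $(1,1)$ to $(1,1)$. So you must exploit the specific form $p_jx^j$ of the terms. For instance, by \eqref{eq:modular}, for $i\leqslant j$,
\[
p_ip_jx^iy^j=p_ip_j(x\vee y)^i(x\wedge y)^iy^{j-i}\leqslant p_j(x\vee y)^j\cdot p_i(x\wedge y)^i,
\]
and symmetrically for $i>j$. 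Summing gives $\widehat{p}(x)\widehat{p}(y)\leqslant\widehat{p}(x\vee y)\,\widehat{p}(x\wedge y)$. The left-hand side equals $\widehat{p}(x\vee y)\bigl(\widehat{p}(x)\wedge\widehat{p}(y)\bigr)$ by \eqref{eq:modular} and the sup case. Cancelling $\widehat{p}(x\vee y)$ therefore gives the missing inequality; if it is $0$ there is nothing to prove.

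Your treatment of the case $p_0=0$, $\bigwedge A=0$ has a second gap. The bound $\bigwedge\widehat{p}(A)\leqslant p_j\bigwedge A^j$ with $j=\val p$ is not justified: a lower bound $\ell$ of $\widehat{p}(A)$ need not be a lower bound of $p_jA^j$, since the inequality $\widehat{p}(a)\geqslant p_ja^j$ points the wrong way. The paper instead writes $p=Xq$, fixes $a_0\in A$, and observes that for every $a\in A$
\[
\ell\leqslant\widehat{p}(a)\wedge\widehat{p}(a_0)=\widehat{p}(a\wedge a_0)=(a\wedge a_0)\,\widehat{q}(a\wedge a_0)\leqslant a\,\widehat{q}(a_0).
\]
Hence $\ell\leqslant(\bigwedge A)\,\widehat{q}(a_0)=0$ by \eqref{eq:infsupf}. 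This step also relies on the binary inf identity, so it must be proved properly first.
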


\begin{proof}
The first part of the proposition comes from a combination of Equations~\eqref{eq:infsupf1}, \eqref{eq:infsupf}, \eqref{eq:infplus1}, \eqref{eq:infplus}, and Lemma~\ref{prop:sn}. 
If $\bigwedge A = 0$ with $A$ finite, then $0 \in A$, so $\widehat{p}(\bigwedge A) = p_0 = \bigwedge \widehat{p}(A)$. 

Now suppose that $p_0 = 0$ and $\bigwedge A = 0$, with $A$ not necessarily finite. 
We need to prove that $\bigwedge \widehat{p}(A) = 0$. 
By hypothesis $p_0 = 0$, so we can write $p(X) = X q(X)$ for some $q \in \cis[X]$. 
Let $\ell \in \cis$ be a lower bound of $\{ \widehat{p}(a) : a \in A \}$, and let $a_0 \in A$. 
Take $b := a \wedge a_0$. 
Then $\ell \leqslant \widehat{p}(a) \wedge \widehat{p}(a_0) = \widehat{p}(b) = b \widehat{q}(b) \leqslant a \widehat{q}(a_0)$. 
This implies that $\ell \leqslant (\bigwedge A) \widehat{q}(a_0) = 0$, so $\ell = 0$. 
This proves that the inf of $\{ \widehat{p}(a) : a \in A \}$ is $0$. 
\end{proof}

\begin{remark}
Proposition~\ref{prop:infcom} shows in particular that every polynomial function is Scott-continuous, and that every polynomial function $\widehat{p}$ such that $\widehat{p}(0) = 0$ is Lawson-continuous and dually Lawson-continuous (see Gierz et al.\ \cite{Gierz03} for the definitions). 
\end{remark}


\begin{proposition}[Golan]\label{prop:golan}
Let $p \in \cis[X]$. 
If $p_0 = 0$ and $\deg p > 0$, then $\widehat{p}$ is an order-embedding, in the sense that
\[
{ \widehat{p}(x) \leqslant \widehat{p}(y) } \Leftrightarrow { x \leqslant y },
\]
for all $x, y \in \cis$. 
In particular, $\widehat{p}$ is injective. 
\end{proposition}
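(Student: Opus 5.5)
The forward implication $x \leqslant y \Rightarrow \widehat{p}(x) \leqslant \widehat{p}(y)$ holds for every polynomial function. Indeed, $\widehat{p}$ is additive by Proposition~\ref{prop:infcom}, so $x \plus y = y$ gives $\widehat{p}(x) \plus \widehat{p}(y) = \widehat{p}(y)$. The content is therefore the converse. I would reduce it to the case of comparable arguments by replacing $x$ with $x \wedge y$. Suppose $\widehat{p}(x) \leqslant \widehat{p}(y)$. Since $\widehat{p}$ preserves binary infs (Proposition~\ref{prop:infcom}), we get $\widehat{p}(x \wedge y) = \widehat{p}(x) \wedge \widehat{p}(y) = \widehat{p}(x)$. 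So it suffices to show that $\widehat{p}$ is injective on comparable pairs. Concretely, I would show that $u \leqslant v$ and $\widehat{p}(u) = \widehat{p}(v)$ imply $u = v$, and then apply this with $u = x \wedge y$ and $v = x$ to get $x = x \wedge y \leqslant y$.

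For the comparable case, first dispose of zero. If $u = 0$, then $\widehat{p}(u) = p_0 = 0$. Write $n = \deg p > 0$; then $\widehat{p}(v) \geqslant p_n v^n$, which is non-zero whenever $v \neq 0$, because $\cis$ has no zero divisors. Hence $v = 0$. So assume $u, v \in \cis^*$ with $u \leqslant v$, and set $t := u v^{-1} \leqslant 1$. Then $u^j \leqslant v^j$ for all $j$. Because $p_0 = 0$, every monomial present has exponent $j \geqslant 1$.

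The key step, and the main obstacle, is to use cancellativity to extract $u = v$ from $\Plus_j p_j u^j = \Plus_j p_j v^j$. My plan is to exploit $t \leqslant 1$ together with $j \geqslant 1$. For each $j \geqslant 1$ we have $t^j \leqslant t$, since $t^{j-1} \leqslant 1$. Therefore $p_j u^j = t^j p_j v^j \leqslant t\, p_j v^j$, and summing gives $\widehat{p}(v) = \widehat{p}(u) \leqslant t\, \widehat{p}(v)$. Since $\widehat{p}(v) \neq 0$ (by the argument above), cancelling in the group $\cis^*$ gives $1 \leqslant t$. Combined with $t \leqslant 1$, this yields $t = 1$, i.e.\ $u = v$.

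The hypothesis $p_0 = 0$ is essential exactly at the inequality $t^j \leqslant t$, which fails for $j = 0$. The hypothesis $\deg p > 0$ guarantees $\widehat{p}(v) \neq 0$ for $v \neq 0$. Injectivity then follows directly from the order-embedding property by antisymmetry.
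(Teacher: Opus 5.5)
Your proof is correct. The paper gives no argument of its own for this statement; it only refers to Golan's book. Your proof is therefore a self-contained alternative built from the paper's own tools.

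You reduce the converse implication to injectivity on comparable pairs, using the lattice-morphism property of Proposition~\ref{prop:infcom}. You then handle $u \leqslant v$ by a scaling argument. Set $t = uv^{-1} \leqslant 1$. Every exponent present satisfies $j \geqslant 1$, which is exactly where $p_0 = 0$ enters, so $t^j \leqslant t$. This gives $\widehat{p}(v) = \widehat{p}(u) \leqslant t\,\widehat{p}(v)$. Cancellation in the group $\cis^*$ is legitimate because $\widehat{p}(v) \geqslant p_n v^n \neq 0$, and it forces $t = 1$. All steps check out, including the separate treatment of $u = 0$.

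Compared with the citation, your argument keeps the result internal to the paper and shows precisely what each hypothesis does. Degree $> 0$ together with the absence of zero divisors keeps $\widehat{p}$ non-zero off $0$. The condition $p_0 = 0$ is what makes the contraction $t^j \leqslant t$ available for every monomial.

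One small simplification is possible. You could reduce with $x \plus y$ instead of $x \wedge y$. If $\widehat{p}(x) \leqslant \widehat{p}(y)$, then $\widehat{p}(x \plus y) = \widehat{p}(x) \plus \widehat{p}(y) = \widehat{p}(y)$, and $y \leqslant x \plus y$. The comparable case then gives $x \plus y = y$, that is, $x \leqslant y$. This needs only additivity of $\widehat{p}$, which is a direct consequence of the Frobenius identity (Lemma~\ref{lem:frobenius}), rather than preservation of binary infs.
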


\begin{proof}
See \cite[Proposition~2.22]{Golan03}. 
\end{proof}

\section{Closable and closed polynomials}\label{sec:clo}

This section first introduces the notion of a \textit{closable} polynomial. 
We prove that a polynomial $p(X)$ is closable if and only if there is a greatest polynomial $q(X)$ such that $p(X)$ and $q(X)$ have the same polynomial function. 
Denoting this polynomial by $\overline{p}(X)$, it is then natural to define a polynomial $p(X)$ as  \textit{closed} if it is closable and such that $p(X) = \overline{p}(X)$. 
Closable and closed polynomials will reveal their importance in the next section with our ``fundamental theorems'', where we will notably show that a polynomial is closed if and only if it can be decomposed as a product of polynomials of degree $1$. 

\subsection{Closable polynomials}

A polynomial $p \in \cis[X]$ is called \textit{closable} if the subset $\{ \widehat{p}(x) x^{-j} : x \in \cis^* \}$ has an inf for every $j \in \mathbb{N}$. 
Whenever this inf exists, we denote it by $\overline{p}_j$, i.e.\
\begin{equation}\label{eq:pjbar}
\overline{p}_j := \bigwedge_{x \in \cis^*} \widehat{p}(x) x^{-j}. 
\end{equation}
For a closable polynomial $p(X)$, the map $j \mapsto \overline{p}_j$ is reminiscent of the Legendre--Fenchel transform of $\widehat{p}$ in the sense of convex analysis, though turned here into a multiplicative rather than an additive form. 

The following lemma will be used repeatedly in the remaining part of this paper. 

\begin{lemma}\label{lem:0n}
Let $p \in \cis[X]$, $v = \val p$, and $n = \deg p$. 
Then we have: 
\begin{enumerate}
  \item\label{lem:0n0} $\overline{p}_0 = p_0^{}$,
  \item\label{lem:0n1} $\overline{p}_v = p_v^{}$, 
  \item\label{lem:0n2} $\overline{p}_n = p_n^{}$, 
  \item\label{lem:0n3} $\overline{p}_j = 0$, for all $j > n$ and all $j < v$. 
\end{enumerate}
\end{lemma}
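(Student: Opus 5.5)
The plan is to reduce all four assertions to a single statement about the constant term. We prove
\[
\bigwedge_{x \in \cis^*} \widehat{q}(x) = q_0 \qquad \text{for every } q \in \cis[X],
\]
with the inf existing. The other items then follow by rewriting $\widehat{p}(x) x^{-j}$ as another polynomial function evaluated at $x$ or at $x^{-1}$. The case $p = 0$ is trivial, since every quantity involved is $0$, so assume $p \neq 0$.

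\emph{Step 1 (item~\eqref{lem:0n0}).} Write $q(X) = q_0 + X r(X)$, so that $\widehat{q}(x) = q_0 + x\,\widehat{r}(x) \geqslant q_0$; thus $q_0$ is a lower bound.
\begin{itemize}
\item If $q_0 \neq 0$, we show the inf is attained. If $\widehat{r}(1) = 0$ then $\widehat{r}(x) = 0$ for $x \leqslant 1$ by monotonicity, so $\widehat{q}(1) = q_0$. Otherwise set $x := (q_0\,\widehat{r}(1)^{-1}) \wedge 1 \in \cis^*$. Then $x \leqslant 1$ and monotonicity give $x\,\widehat{r}(x) \leqslant x\,\widehat{r}(1) \leqslant q_0$, hence $\widehat{q}(x) = q_0$. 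In either case the inf exists and equals $q_0$.
\item If $q_0 = 0$, apply Proposition~\ref{prop:infcom} with $A = \cis^*$. Its inf exists and is $0$ by Lemma~\ref{lem:inf0bis}, since $\cis \neq \mathbb{B}$, and the hypothesis $q_0 = 0$ allows a zero inf. This gives $\bigwedge \widehat{q}(\cis^*) = \widehat{q}(0) = 0$.
\end{itemize}
Applying this to $q = p$ gives item~\eqref{lem:0n0}.

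\emph{Step 2 (items~\eqref{lem:0n1}, \eqref{lem:0n2} and the case $j < v$ of~\eqref{lem:0n3}).} For $x \in \cis^*$ we have $\widehat{p}(x) x^{-v} = \sum_{i} p_{v+i} x^i = \widehat{s}(x)$, where $s := \sum_i p_{v+i} X^i$ has constant term $p_v$. Step~1 then gives $\overline{p}_v = p_v$.

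For $j < v$, $\widehat{p}(x) x^{-j} = x^{v-j}\,\widehat{s}(x)$ is the function of the polynomial $X^{v-j} s$, whose constant term is $0$. Hence $\overline{p}_j = 0$.

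For the top degree, put $y = x^{-1}$, which is a bijection of $\cis^*$. Then $\widehat{p}(x) x^{-n} = \sum_i p_{n-i} y^i = \widehat{\tilde p}(y)$, with $\tilde p$ the reversed polynomial, whose constant term is $p_n$. So $\overline{p}_n = p_n$.

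For $j > n$, $\widehat{p}(x) x^{-j} = y^{j-n}\,\widehat{\tilde p}(y)$ is the function of a polynomial with zero constant term, so again the inf over $y \in \cis^*$ is $0$.

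\emph{Expected difficulty.} The only step I expect to need care is Step~1 when $q_0 \neq 0$. There we cannot appeal to Proposition~\ref{prop:infcom}, because $\bigwedge \cis^* = 0$ and $q_0 \neq 0$. Equation~\eqref{eq:infplus} fails for the same reason, since it needs a non-zero inf or a finite set. This is why I plan to exhibit the explicit witness $x = (q_0\,\widehat{r}(1)^{-1}) \wedge 1$ at which the inf is attained.
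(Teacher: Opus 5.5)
Your proof is correct, and it is organized differently from the paper's. The paper handles each item on its own terms. It proves \eqref{lem:0n1} with the witness $y=\bigwedge_{j\in\supp q}\bigl((q_j^{-1}q_0)\wedge 1\bigr)$ for $q=p^{(v)}$, and \eqref{lem:0n2} with the witness $p_n^{-1}(p_0+\cdots+p_n)$. It proves the case $j>n$ of \eqref{lem:0n3} directly: using $x_0=1+u^{-1}\sum_k p_k$, every lower bound lies below every $u\in\cis^*$, and Lemma~\ref{lem:inf0bis} finishes. The case $j<v$ is left as ``similar'', and \eqref{lem:0n0} is deduced last from \eqref{lem:0n1} and \eqref{lem:0n3}.

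You instead isolate the single fact $\bigwedge_{x\in\cis^*}\widehat q(x)=q_0$. You transport it by the shift by $x^{-v}$ and by the inversion $x\mapsto x^{-1}$, which reverses the polynomial. For the zero infs you reuse the $p_0=0$ clause of Proposition~\ref{prop:infcom}. That proposition is established before this lemma and does not depend on it, so there is no circularity.

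The attained minimizers actually coincide with the paper's. Since $\bigwedge_j q_j^{-1}=\bigl(\sum_j q_j\bigr)^{-1}$ in the $\ell$-group $\cis^*$, the paper's witness for \eqref{lem:0n1} equals your $(q_0\widehat r(1)^{-1})\wedge 1$. Likewise, the inverse of your witness for the reversed polynomial is $p_n^{-1}(p_0+\cdots+p_{n-1})\vee 1$, which is exactly the paper's $p_n^{-1}(p_0+\cdots+p_n)$.

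What your route buys is economy and symmetry: one computation replaces three, and the case $j<v$ is treated explicitly rather than by analogy. The paper's route stays elementary and self-contained, with no appeal to Proposition~\ref{prop:infcom}.
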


\begin{proof}
Take 
\[
A(p, j) := \{ \widehat{p}(x) x^{-j} : x \in \cis^* \},
\]
for all $j \in \mathbb{N}$. 

\eqref{lem:0n1}. 
We prove that $\bigwedge A(p, v) = p_v$. 
We clearly have $\widehat{p}(x) x^{-v} \geqslant p_v$ for all $x \in \cis^*$, i.e.\ $p_v$ is a lower bound of $A(p, v)$. 
Now let $\ell \in \cis$ be another lower bound of $A(p, v)$, and let us show that $p_v \geqslant \ell$. 
We consider the polynomial $q(X) = p^{(v)}(X)$. 
For every $j \in \supp q$, we take $y_j^{} := (q_j^{-1} q_0^{}) \wedge 1$. 
Then we have 
\[
q_j^{} y_j^j \leqslant q_j^{} y_j^{} = q_0^{} \wedge q_j^{} \leqslant q_0^{},
\]
for all $j \in \supp q$. 
Take $y := \bigwedge_{j \in \supp q} y_j$. 
Then 
\[
\widehat{q}(y) = \Plus_{j \in \supp q} q_j^{} y^j \leqslant \Plus_{j \in \supp q} q_j^{} y_j^j \leqslant q_0^{}. 
\] 
Note that $y \neq 0$ since $q_0 = p_v \neq 0$. 
Thus, $p_v = q_0 \geqslant \widehat{q}(y) = \widehat{p}(y) y^{-v} \geqslant \ell$. 
This proves that $p_v$ is the greatest lower bound of $A(p, v)$, i.e.\ its inf. 

\eqref{lem:0n2}. 
We now prove that $\bigwedge A(p, n) = p_n$. 
We see that $p_n$ is a lower bound of $A(p, n)$. 
Let $m$ be another lower bound of $A(p, n)$. 
Take $y := p_n^{-1}(p_0^{} \plus \cdots \plus p_n^{})$. 
Then $p_n y^n \geqslant \widehat{p}(y)$, so that $p_n \geqslant \widehat{p}(y) y^{-n} \geqslant m$. 
This proves that $p_n$ is the greatest lower bound of $A(p, n)$, i.e.\ its inf. 

\eqref{lem:0n3}. 
Let us show that $\bigwedge A(p, j) = 0$, for all $j > n$ (the case $j < v$ is similar). 
Let $\ell \in \cis$ be a lower bound of $A(p, j)$, and let $u \in \cis^*$. 
Take $x_0 := 1 + u^{-1} \Plus_{k \in \supp p} p_k$. 
Then $x_0 \geqslant 1$, and $u x_0^{j-k} \geqslant u x_0^{} \geqslant p_k$, for all $k \in \supp p$. 
Hence, $p_k^{} x_0^k \leqslant u x_0^{j}$, for all $k \in \supp p$. 
This shows that $\widehat{p}(x_0^{}) x_0^{-j} \leqslant u$. 
This entails $\ell \leqslant u$, for all $u \in \cis^*$. 
By Lemma~\ref{lem:inf0bis}, we obtain $\ell = 0$, as required. 

\eqref{lem:0n0}. 
If $p_0 = 0$, then $v > 0$, so $\overline{p}_0 = 0 = p_0^{}$ by \eqref{lem:0n3}. 
If $p_0 \neq 0$, then $v = 0$, so $\overline{p}_0 = p_0^{}$ by \eqref{lem:0n1}. 
\end{proof}


Since $\overline{p}_j = 0$ whenever $j > \deg p$, the sequence of $\overline{p}_j$'s defines a polynomial $\overline{p} \in \cis[X]$, which we call the \textit{closure} of the closable polynomial $p$. 

\begin{lemma}\label{lem:pbar}
For every closable polynomial $p \in \cis[X]$, we have 
\begin{enumerate}
  \item\label{lem:pbar1} $\deg p = \deg \overline{p}$;
  \item\label{lem:pbar2} $p(X) \leqslant \overline{p}(X)$;
  \item\label{lem:pbar3} $\widehat{p}(x) = \widehat{\overline{p}}(x)$, for all $x \in \cis$; 
  \item\label{lem:pbar4} $\overline{p}(X)$ is the greatest element of $\{ q \in \cis[X] : \widehat{p} = \widehat{q} \}$. 
\end{enumerate}
\end{lemma}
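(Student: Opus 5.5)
The plan is to prove the four items in the order (2), (3), (4), (1), since each feeds into the next.

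\emph{Item (2).} Fix $j \in \mathbb{N}$. For every $x \in \cis^*$ we have $p_j x^j \leqslant \widehat{p}(x)$, that is, $p_j \leqslant \widehat{p}(x) x^{-j}$. So $p_j$ is a lower bound of $\{\widehat{p}(x)x^{-j} : x \in \cis^*\}$, and therefore $p_j \leqslant \overline{p}_j$. This holds for every $j$, which gives $p \leqslant \overline{p}$ coefficientwise.

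\emph{Item (3).} Evaluation is monotone in the coefficients, so item (2) gives $\widehat{p} \leqslant \widehat{\overline{p}}$ pointwise. For the reverse inequality at $x \in \cis^*$, note that by definition of the inf, $\overline{p}_j \leqslant \widehat{p}(x) x^{-j}$ for every $j$. Hence $\overline{p}_j x^j \leqslant \widehat{p}(x)$ for every $j$, and summing over the finitely many $j \leqslant \deg \overline{p}$ yields $\widehat{\overline{p}}(x) \leqslant \widehat{p}(x)$. At $x = 0$ both sides equal the constant term, and $\overline{p}_0 = p_0$ by Lemma~\ref{lem:0n}\eqref{lem:0n0}. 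So $\widehat{p} = \widehat{\overline{p}}$.

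\emph{Item (4).} By item (3), $\overline{p}$ belongs to the set $\{ q \in \cis[X] : \widehat{p} = \widehat{q} \}$. Now let $q$ be any element of that set. For each $j$ and each $x \in \cis^*$ we have $q_j x^j \leqslant \widehat{q}(x) = \widehat{p}(x)$, so $q_j$ is a lower bound of the family defining $\overline{p}_j$. Therefore $q_j \leqslant \overline{p}_j$ for all $j$, i.e.\ $q \leqslant \overline{p}$. So $\overline{p}$ is the greatest element of the set.

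\emph{Item (1).} The degree comparison follows from Lemma~\ref{lem:0n}. Set $n = \deg p$. If $p = 0$, then $\widehat{p} = 0$, all the infs are $0$, and $\overline{p} = 0$. Otherwise, Lemma~\ref{lem:0n}\eqref{lem:0n3} gives $\overline{p}_j = 0$ for all $j > n$, and Lemma~\ref{lem:0n}\eqref{lem:0n2} gives $\overline{p}_n = p_n \neq 0$. Hence $\deg \overline{p} = n$.

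None of these steps looks like a real obstacle. The only point needing care is in item (3): the inf defining $\overline{p}_j$ ranges only over $x \in \cis^*$, so the case $x = 0$ has to be handled separately, and it is covered by Lemma~\ref{lem:0n}\eqref{lem:0n0}.
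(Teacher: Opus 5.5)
Your proposal is correct and follows essentially the same argument as the paper. Items (2) and (3) are proved identically, including the separate treatment of $x = 0$ via Lemma~\ref{lem:0n}\eqref{lem:0n0}; item (1) comes from Lemma~\ref{lem:0n}; and in (4) your direct bound $q_j \leqslant \overline{p}_j$ is the paper's observation that $q$ is closable with $\overline{q} = \overline{p} \geqslant q$, written out explicitly.
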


\begin{proof}
\eqref{lem:pbar1} is already clear from Lemma~\ref{lem:0n}. 

\eqref{lem:pbar2}. 
We have $p_j \leqslant \widehat{p}(x) x^{-j}$ for all $j \in \mathbb{N}$ and $x \in \cis^*$, so $p_j \leqslant \overline{p}_j$. This implies $p(X) \leqslant \overline{p}(X)$. 

\eqref{lem:pbar3}. 
From $p(X) \leqslant \overline{p}(X)$ we get $\widehat{p}(x) \leqslant \widehat{\overline{p}}(x)$, for all $x \in \cis$. 
For the reverse inequality, pick some $x \in \cis^*$. 
For every $j \in \mathbb{N}$, the definition of $\overline{p}_j$ implies that $\overline{p}_j x^j \leqslant \widehat{p}(x)$, hence $\widehat{\overline{p}}(x) = \Plus_{j \geqslant 0} \overline{p}_j x^j \leqslant \widehat{p}(x)$. 
Moreover, $\widehat{p}(0) = p_0^{} = \overline{p}_0 =  \widehat{\overline{p}}(0)$ by Lemma~\ref{lem:0n}\eqref{lem:0n0}, so that $\widehat{p}(x) = \widehat{\overline{p}}(x)$, for all $x \in \cis$. 

\eqref{lem:pbar4}. 
Let $q \in \cis[X]$ with $\widehat{p} = \widehat{q}$. 
Then $q(X)$ is closable and $\overline{p}_j = \overline{q}_j$ for all $j \in \mathbb{N}$, i.e.\ $\overline{p}(X) = \overline{q}(X) \geqslant q(X)$. 
This shows that $\overline{p}(X)$ is the greatest element of $\{ q \in \cis[X] : \widehat{p} = \widehat{q} \}$. 
\end{proof}

\subsection{Closed polynomials}

We say that a polynomial $p \in \cis[X]$ is \textit{closed} if it is closable and $p = \overline{p}$, i.e.\  
\begin{equation}\label{eq:pj}
p_j = \bigwedge_{x \in \cis^*} \widehat{p}(x) x^{-j}, 
\end{equation}
for all $j \in \mathbb{N}$. 
By Lemma~\ref{lem:0n}, Equation~\eqref{eq:pj} already holds for indices $j$ such that $j \leqslant \val p$ or $j \geqslant \deg p$. 

\begin{remark}
A closable polynomial was called a \textit{closed} polynomial by Baccelli et al. \cite[Definition~3.87]{Baccelli92}. 
A closed polynomial was called a \textit{maximally represented polynomial} by Tsai \cite{Tsai12} and a \textit{Newton polynomial} by Rump \cite{Rump15}; the latter term relates to the \textit{Newton polygon} associated with a polynomial when $\cis = \mathbb{R}_{\max}$, see e.g.\ Akian et al. \cite[Proposition~2.7]{Akian25}. 
\end{remark}

\begin{lemma}
Let $p \in \cis[X]$. 
If $p(X)$ is closable, then $\overline{p}(X)$ is closed, and is the least closed polynomial greater than $p(X)$. 
\end{lemma}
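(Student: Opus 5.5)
The whole argument rests on Lemma~\ref{lem:pbar}, applied once to $p$ and once to $\overline{p}$.

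\emph{$\overline{p}$ is closed.} By Lemma~\ref{lem:pbar}\eqref{lem:pbar3}, $\widehat{\overline{p}} = \widehat{p}$. The closability condition and the closure coefficients $\overline{p}_j = \bigwedge_{x \in \cis^*} \widehat{p}(x) x^{-j}$ depend only on the polynomial function. Hence the sets $\{ \widehat{\overline{p}}(x) x^{-j} : x \in \cis^* \}$ and $\{ \widehat{p}(x) x^{-j} : x \in \cis^* \}$ coincide for every $j$. So $\overline{p}$ is closable, and its closure has coefficients $\overline{\overline{p}}_j = \overline{p}_j$. This gives $\overline{\overline{p}} = \overline{p}$, i.e.\ $\overline{p}$ is closed.

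\emph{$\overline{p}$ lies above $p$.} By Lemma~\ref{lem:pbar}\eqref{lem:pbar2}, $\overline{p} \geqslant p$.

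\emph{Minimality.} Let $q$ be a closed polynomial with $q \geqslant p$; I must show $\overline{p} \leqslant q$. Since $p \leqslant q$ coefficientwise, $\widehat{p}(x) \leqslant \widehat{q}(x)$ for every $x \in \cis$, so $\widehat{p}(x) x^{-j} \leqslant \widehat{q}(x) x^{-j}$ for all $x \in \cis^*$ and all $j$. Both families have infima, because $p$ and $q$ are closable. Monotonicity of infima then gives $\overline{p}_j \leqslant \overline{q}_j = q_j$, the last equality because $q$ is closed. Hence $\overline{p} \leqslant q$.

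There is no real obstacle. The one point to state carefully is that the closure depends only on $\widehat{p}$, which is exactly what makes $\overline{p}$ closable; the rest is monotonicity of infima.
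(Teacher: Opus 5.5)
Your proof is correct and follows the paper's argument. Closedness of $\overline{p}$ comes from $\widehat{\overline{p}} = \widehat{p}$ (Lemma~\ref{lem:pbar}), and minimality comes from $\overline{p}_j \leqslant \overline{q}_j = q_j$; you are a bit more explicit than the paper in spelling out the monotonicity-of-infima step and in recording $p \leqslant \overline{p}$, which the paper leaves implicit.
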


\begin{proof}
Since $\widehat{p}$ and $\widehat{\overline{p}}$ coincide on $\cis$ by Lemma~\ref{lem:pbar}, we have 
\[
\bigwedge_{x \in \cis^*} \widehat{p}(x) x^{-j} = \bigwedge_{x \in \cis^*} \widehat{\overline{p}}(x) x^{-j},
\] 
for all $j \in \mathbb{N}$. 
This implies that $\overline{p}(X)$ is a closed polynomial. 

Now let $q \in \cis[X]$ be a closed polynomial such that $p(X) \leqslant q(X)$. 
This is a direct consequence of the definitions that $\overline{p}(X) \leqslant \overline{q}(X) = q(X)$. 
Thus, $\overline{p}(X)$ is indeed the least closed polynomial greater than $p(X)$. 
\end{proof}


Note that every polynomial of degree $\leqslant 1$ is closed. Theorem~\ref{thm:ft1} below will give more. 
A polynomial $p$ has \textit{full support} if 
\[
\supp p = \{ \val p, \val p + 1, \ldots, \deg p \},
\]
or equivalently if $p_{j} = 0$ implies $p_{j-1} = 0$, for all $0 < j < \deg p$.    

\begin{lemma}\label{lem:support}
Let $p \in \cis[X]$ be a polynomial of degree $> 0$. 
If $p$ is closed, then $p$ has full support. 
\end{lemma}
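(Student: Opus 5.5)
We argue by contradiction. Suppose $p$ is closed of degree $n>0$, with valuation $v$, and that there is an index $j$ with $v<j<n$ and $p_j=0$. By Lemma~\ref{lem:0n} we have $p_v\neq 0$ and $p_n\neq 0$. Since $p$ is closed, $p_j=\overline{p}_j=\bigwedge_{x\in\cis^*}\widehat{p}(x)x^{-j}$, so the goal is to show that this infimum is non-zero. It suffices to produce a single non-zero lower bound $\ell$ of the set $\{\widehat{p}(x)x^{-j}: x\in\cis^*\}$.

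The natural candidate comes from convexity. Since $v<j<n$, we can write $j(n-v)=(n-j)v+(j-v)n$. For every $x\in\cis^*$ we have $\widehat{p}(x)\geqslant p_v x^v$ and $\widehat{p}(x)\geqslant p_n x^n$. Hence
\[
\widehat{p}(x)^{n-v}=\widehat{p}(x)^{n-j}\,\widehat{p}(x)^{j-v}\geqslant p_v^{n-j}x^{v(n-j)}\,p_n^{j-v}x^{n(j-v)}=p_v^{n-j}p_n^{j-v}x^{j(n-v)}.
\]
Therefore $\bigl(\widehat{p}(x)x^{-j}\bigr)^{n-v}\geqslant c:=p_v^{n-j}p_n^{j-v}$, and $c\neq 0$.

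The main obstacle is that $c$ need not be an $(n-v)$-th power in $\cis$, so we cannot simply take its root as the lower bound. Instead I would use $\ell:=c\wedge 1$, or more carefully $\ell:=(c\wedge 1)$ combined with a bound that exploits $\ell\leqslant 1$. Write $m:=n-v\geqslant 1$ and $y:=\widehat{p}(x)x^{-j}$, so that $y^m\geqslant c$.

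If $y\geqslant 1$, then $y\geqslant 1\geqslant \ell$. Otherwise put $z:=y\wedge 1\leqslant 1$. Then $z^m=y^m\wedge 1\geqslant c\wedge 1=\ell$, using Lemma~\ref{prop:sn} together with distributivity of the lattice-group. Since $z\leqslant 1$, we get $z\geqslant z^m\geqslant \ell$, hence $y\geqslant z\geqslant\ell$.

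So $\ell$ is a non-zero lower bound of the set, which gives $p_j=\overline{p}_j\geqslant\ell>0$, contradicting $p_j=0$. Hence $p$ has full support.
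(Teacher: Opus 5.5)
Your proof is correct, but it takes a different route from the paper's.

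\textbf{The paper's route.} The paper propagates non-vanishing upward one index at a time. Assuming $p_j \neq 0$ with $j < n-1$, it sets $k = n-j$, $a = p_n^{-1}p_j$ and $b = (a^{k-1}+a^{1-k})^{-1}$. The binomial identity (Lemma~\ref{lem:bi}) together with cancellation of powers (Lemma~\ref{prop:perfore}) gives $bx \leqslant x^k + a$. Hence $p_n b x^{j+1} \leqslant p_n x^n + p_j x^j \leqslant \widehat{p}(x)$, and so $p_{j+1} \geqslant p_n b \neq 0$.

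\textbf{Your route.} You handle each intermediate index $j$ in one stroke by interpolating between the two extreme coefficients $p_v$ and $p_n$. You get $(\widehat{p}(x)x^{-j})^{n-v} \geqslant p_v^{n-j}p_n^{j-v}$. You then extract a root-free lower bound via $y \wedge 1$, Lemma~\ref{prop:sn}, and the fact that $z \leqslant 1$ implies $z^m \leqslant z$.

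\textbf{Comparison.} Both arguments rest on the same device: truncate below $1$ so that no radicals, which may not exist in $\cis$, are needed. Yours needs no induction and gives the explicit uniform bound $p_j = \overline{p}_j \geqslant (p_v^{n-j}p_n^{j-v}) \wedge 1$ for all $v<j<n$. The paper's version is more local: it only uses a neighbouring non-zero coefficient and the leading one, and bounds $p_{j+1}$ in terms of $p_j p_n^{-1}$.

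Two minor remarks. First, $p_v \neq 0 \neq p_n$ holds by the definitions of valuation and degree, so Lemma~\ref{lem:0n} is not needed there. Second, your case split on $y \geqslant 1$ is superfluous, since the argument with $z = y \wedge 1$ works in all cases.
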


\begin{proof}
%
Let $n = \deg p$. 
To prove that $p$ has full support, we suppose that $p_j \neq 0$ for some $0 \leqslant j < n-1$, and we show that $p_{j+1} \neq 0$. 
For convenience we let $k = n-j$. 
Take $a := p_n^{-1}p_j^{}$. 
Since $a \neq 0$, we can define 
\[
b := (a^{k-1} \plus a^{1-k})^{-1}.
\] 
Then $b^k \leqslant a^{k-1}$. 
Let $x \in \cis$. 
Then $b^k x^k \leqslant a^{k-1} x^k$. 
Moreover, the binomial identity of Lemma~\ref{lem:bi} implies $a^{k-1} x^k \leqslant (x^k \plus a)^k$. 
Thus, $b^k x^k \leqslant (x^k \plus a)^k$, so $b x \leqslant x^k \plus a$ by Lemma~\ref{prop:perfore}. This shows that $p_n b x^{j+1} \leqslant p_n x^n \plus p_j x^j \leqslant \widehat{p}(x)$, for all $x \in \cis$. 
Consequently, 
\[
p_n b \leqslant \bigwedge_{x \in \cis^*} \widehat{p}(x) x^{-(j+1)} = p_{j+1}.
\]
Since $p_n b \neq 0$, we obtain $p_{j+1} \neq 0$. 
\end{proof}

\section{First and second fundamental theorems}\label{sec:ft}

In this section, we prove that a polynomial $p$ is closed (resp.\ closable) if and only if $p$ (resp.\ $\widehat{p}$) splits. 
Calling an idempotent semifield \textit{algebraically closed} if every polynomial function splits, we thus obtain the characterization that an idempotent semifield is algebraically closed if and only if every polynomial is closable. 
It is then a trivial consequence that every complete idempotent semifield, in particular $\mathbb{R}_{\max}$, is algebraically closed. 
In passing, we prove that a basket of roots of size $k < n$ of a closed polynomial of degree $n$ can be completed to a basket of roots of size $n$. 

\subsection{First fundamental theorem}

Let $p \in \cis[X]$ of degree $> 0$. 
If we can write 
\[
p(X) = (X \plus r_1) \cdots (X \plus r_k) q(X),
\]
for some $r_1, \ldots, r_k \in \cis$ and $q \in \cis[X]$, we call the tuple $(r_1, \ldots, r_k)$ a \textit{basket of roots} of $p(X)$. 
If $k = \deg p$, we say that the basket of roots is \textit{full}. 
This amounts to saying that $p(X)$ \textit{splits} in $\cis$, in the sense that 
\begin{equation}\label{eq:sp}
p(X) = p_n (X \plus r_1) \cdots (X \plus r_n), 
\end{equation}
for some $r_1, \ldots, r_n \in \cis$, with $n = \deg p$. 

Uniqueness---up to permutations---of $(r_1, \ldots, r_n)$ in Equation~\eqref{eq:sp} is not guaranteed in general. 
For instance, for a polynomial of degree $2$ it is always true that  
\[
(X \plus r_1) (X \plus r_2) = (X \plus c_1) (X \plus c_2),
\]
with $c_1 = r_1 \vee r_2$ and $c_2 = r_1 \wedge r_2$, thanks to the modular identity given by Equation~\eqref{eq:modular}, while in general $\{ r_1, r_2 \} \neq \{ c_1, c_2 \}$ (unless $\cis$ is totally ordered). 
See also Example~\ref{ex:qdiv2}. 

\begin{lemma}\label{lem:partial}
Let $p \in \cis[X]$ be a polynomial of degree $n > 0$. 
If 
\[
p(X) = p_n (X \plus c_1) \cdots (X \plus c_n)
\]
with $c_1 \geqslant \ldots \geqslant c_n$, then 
\[
p'(X) = p_n (X \plus c_1) \cdots (X \plus c_{n-1}).
\]
\end{lemma}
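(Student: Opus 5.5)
We expand the product explicitly and compare coefficients. With $c_1 \geqslant \ldots \geqslant c_n$, expanding $(X \plus c_1)\cdots(X \plus c_n)$ in the commutative idempotent semiring $\cis[X]$ gives
\[
(X \plus c_1)\cdots(X \plus c_n) = \Plus_{k=0}^{n} e_k X^{n-k},
\]
where $e_k$ is the idempotent elementary symmetric sum $\Plus_{|I|=k} \prod_{i\in I} c_i$. The key observation is that, because the $c_i$ are totally ordered among themselves (a chain, even if $\cis$ is not), every product $\prod_{i \in I} c_i$ with $|I| = k$ is bounded above by $c_1 \cdots c_k$. Indeed, if $I = \{i_1 < \cdots < i_k\}$, then $i_m \geqslant m$, so $c_{i_m} \leqslant c_m$, and monotonicity of multiplication gives the bound. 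Hence $e_k = c_1 \cdots c_k$, the empty product $e_0$ being $1$.

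Therefore
\[
p(X) = p_n \Plus_{k=0}^{n} c_1\cdots c_k\, X^{n-k},
\]
that is, $p_j = p_n c_1 \cdots c_{n-j}$ for $0 \leqslant j \leqslant n$. Applying the same computation to the $n-1$ ordered elements $c_1 \geqslant \ldots \geqslant c_{n-1}$ gives
\[
p_n (X \plus c_1)\cdots(X \plus c_{n-1}) = p_n \Plus_{k=0}^{n-1} c_1\cdots c_k\, X^{n-1-k}.
\]
The coefficient of $X^{j}$ here is $p_n c_1 \cdots c_{n-1-j} = p_{j+1}$. Since $p'_j = p_{j+1}$ by the definition of the derivative, the two polynomials coincide coefficientwise, which proves the claim. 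The degenerate case $n = 1$ is immediate: $p' = p_1$.

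The only step requiring care is the identification $e_k = c_1\cdots c_k$. It uses monotonicity of multiplication in an idempotent semiring ($a \leqslant b \Rightarrow ac \leqslant bc$, which follows from distributivity), applied factor by factor. It also uses the fact that the sum of a finite family containing its own maximum equals that maximum. Both are routine, so no real obstacle is expected. Zero values among the $c_i$ cause no trouble, since the argument never inverts them.
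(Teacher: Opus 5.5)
Your proof is correct, but it takes a different route from the paper. The paper never expands the product. It applies the Leibniz rule of Proposition~\ref{prop:der} to get $p'(X) = p_n \Plus_{j=1}^n \prod_{i \neq j}(X \plus c_i)$. Since $c_n \leqslant c_j$, each summand is dominated by the one omitting the smallest root $c_n$, so the idempotent sum collapses to $p_n (X \plus c_1)\cdots(X \plus c_{n-1})$. That argument uses the same monotonicity-plus-absorption idea as yours, but applies it once to whole linear factors rather than to every elementary symmetric sum; it is shorter but relies on the product rule, which the paper leaves to the reader. Your coefficientwise computation is longer but more self-contained and more informative. It yields the explicit formula $p_j = p_n c_1 \cdots c_{n-j}$ for a split polynomial with ordered roots. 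This is exactly what the paper uses in the proof of Theorem~\ref{thm:ft1} when it writes $\widehat{p}(c_{n-j}) c_{n-j}^{-j} = p_n c_1 \cdots c_{n-j} = p_j$, and in the paper's approach it is obtained only implicitly, by iterating the lemma and reading off constant terms. Your formula also gives at once $c_k = p_{n-k} p_{n-k+1}^{-1}$ whenever $p_{n-k+1} \neq 0$, which is the identification of the corners in the first fundamental theorem.
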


\begin{proof}
Using the formula for deriving a product of polynomials, we get 
\[
p'(X) = p_n \Plus_{j=1}^n \prod_{i \neq j} (X \plus c_i). 
\]
Now, for all $j \in \mathbb{N}$, we have 
\[
\prod_{i \neq j} (X \plus c_i) \leqslant \prod_{i \neq n} (X \plus c_i), 
\]
so that $p'(X) = p_n (X \plus c_1) \cdots (X \plus c_{n-1})$. 
\end{proof}

\begin{lemma}[Existence and uniqueness of corners]\label{lem:split}
Let $p \in \cis[X]$ be a polynomial of degree $n > 0$. 
If $p$ splits in $\cis$, then there exists a unique full basket of roots $(c_1, \ldots, c_n)$ of $p(X)$ with $c_1 \geqslant \ldots \geqslant c_n$. 
\end{lemma}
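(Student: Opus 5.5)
Existence comes first: we start from an arbitrary full basket $(r_1, \ldots, r_n)$, so that $p(X) = p_n (X \plus r_1) \cdots (X \plus r_n)$, and rearrange it into a decreasing one by repeatedly applying the degree-$2$ modular identity $(X \plus a)(X \plus b) = (X \plus a \vee b)(X \plus a \wedge b)$, which follows from Equation~\eqref{eq:modular}. Concretely, we replace the $r_i$ by their ``lattice order statistics'':
\[
c_k := \bigvee_{|I| = k} \bigwedge_{i \in I} r_i .
\]
The cleanest route is probably a bubble-sort style induction rather than this closed formula. We apply the pairwise replacement $(r_i, r_{i+1}) \mapsto (r_i \vee r_{i+1}, r_i \wedge r_{i+1})$ to adjacent factors, in the same pattern as a sorting network; after $O(n^2)$ steps the tuple becomes decreasing. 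This works because each comparator step preserves the product, and a sorting network sorts in any distributive lattice ($\cis$ is a distributive lattice, being an $\ell$-group with a bottom adjoined). Alternatively, we induct on $n$: first sort $(r_1, \ldots, r_{n-1})$ into $c_1 \geqslant \cdots \geqslant c_{n-1}$, then insert $r_n$ by successive comparator swaps from the bottom. The point to check is that insertion preserves monotonicity. After the swaps we get the entries $c_{k} \vee (c_{k+1} \wedge \cdots)$, and distributivity shows that these remain decreasing; this is a routine check.

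Uniqueness is by induction on $n$, using Lemma~\ref{lem:partial}. Suppose $(c_1, \ldots, c_n)$ and $(d_1, \ldots, d_n)$ are two decreasing full baskets. Comparing coefficients of degree $0$ gives
\[
p_n c_1 \cdots c_n = p_0 = p_n d_1 \cdots d_n .
\]
Lemma~\ref{lem:partial} shows that $p'(X) = p_n (X \plus c_1) \cdots (X \plus c_{n-1}) = p_n (X \plus d_1) \cdots (X \plus d_{n-1})$. Note that $p'$ has degree $n-1$ with leading coefficient $p_n$, so the induction hypothesis applies (if $n - 1 > 0$) and gives $c_i = d_i$ for $i < n$. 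The base case $n = 1$ is immediate from $p = p_1 (X \plus c_1)$, whose constant term yields $c_1 = p_1^{-1} p_0$.

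It remains to get $c_n = d_n$. If $c_1 \cdots c_{n-1} \neq 0$, cancellation in the group $\cis^*$ applied to the constant-term identity gives $c_n = d_n$. Otherwise some $c_i = 0$ with $i \leqslant n-1$; monotonicity then forces $c_{n-1} = 0$, hence $c_n = 0$, and likewise $d_{n-1} = c_{n-1} = 0$ forces $d_n = 0$. So $c_n = d_n$ in every case.

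The main obstacle is the existence part, namely making the sorting argument rigorous in a non-totally-ordered lattice. I would handle it with the insertion induction plus distributivity. The uniqueness part is routine given Lemma~\ref{lem:partial}, as long as the zero case is treated separately.
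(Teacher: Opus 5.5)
Your proof is correct and is essentially the paper's. Existence is by insertion-sorting the basket with the comparator identity $(X \plus a)(X \plus b) = (X \plus a \vee b)(X \plus a \wedge b)$; the paper inserts at the top and re-sorts via the strengthened hypothesis $c_1 = r_1 \plus \cdots \plus r_n$, while your bottom-up insertion yields middle entries $c_{k-1} \wedge (c_k \vee r_n) = c_k \vee (c_{k-1} \wedge r_n)$ (index $k-1$, not the $k+1$ you wrote), which lie between $c_k$ and $c_{k-1}$ and so are immediately decreasing. Uniqueness goes, as in the paper, through Lemma~\ref{lem:partial} and the constant term, and your zero case (some $c_i = 0$ with $i < n$ gives $c_{n-1} = d_{n-1} = 0$, hence $c_n = d_n = 0$) is a slightly cleaner substitute for the paper's division by $X^{\val p}$.
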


We call $(c_1, \ldots, c_n)$ the \textit{basket of corners} of the polynomial $p(X)$. 

\begin{proof}
(1): Existence. 
We prove by induction on $n > 0$ the property $P_n$: ``If $p(X) = p_n (X \plus r_1) \cdots (X \plus r_n)$, then $p(X) = p_n (X \plus c_1) \cdots (X \plus c_n)$ with $c_1 = r_1 \plus \cdots \plus r_n$ and $c_1 \geqslant \ldots \geqslant c_n$''.
 
If $n = 1$ this is clear. 
Suppose that $P_n$ holds for some $n > 0$, and let $p \in \cis[X]$ such that $p(X) = p_{n+1} (X \plus r_1) q(X)$, where 
\[
q(X) := (X \plus r_2) \cdots (X \plus r_{n+1}). 
\]
Applying the property $P_n$ to $q(X)$ we can write 
\[
q(X) = (X \plus s_2) \cdots (X \plus s_{n+1}),
\]
with $s_2 := r_2 \plus \cdots \plus r_{n+1}$ and $s_2 \geqslant \ldots \geqslant s_{n+1}$. 
Since $(X \plus r_1) (X \plus s_2) = (X \plus c_1) (X \plus (r_1 \wedge s_2))$ where $c_1 := r_1 \plus s_2 = r_1 \plus \cdots \plus r_{n+1}$, we obtain $p(X) = p_{n+1} (X \plus c_1) b(X)$ with 
\[
b(X) := (X \plus (r_1 \wedge s_2)) (X \plus s_3) \cdots (X \plus s_{n+1}). 
\] 
Now applying the property $P_n$ to $b(X)$ gives $b(X) = (X \plus c_2) \cdots (X \plus c_{n+1})$ with $c_2 := (r_1 \wedge s_2) \plus s_3 \plus \cdots \plus s_{n+1} = (r_1 \wedge s_2) \plus s_3$ and $c_2 \geqslant \ldots \geqslant c_{n+1}$. 
Moreover, $c_2 \leqslant s_2 \leqslant c_1$. 
This shows that 
\[
p(X) = p_{n+1} (X \plus c_1) \cdots (X \plus c_{n+1})
\]
with $c_1 := r_1 \plus \cdots \plus r_{n+1}$ and $c_1 \geqslant \ldots \geqslant c_{n+1}$. 
Therefore, $P_{n+1}$ is proved. 
By induction, the lemma is proved. 

(2): Uniqueness. 
We proceed again by induction on $n > 0$. 
If $n = 1$ this is clear. 
Suppose that the property of uniqueness holds for all polynomial of degree $\leqslant n - 1$, with $n > 1$. 
Let $p(X)$ be a polynomial of degree $n$, and assume that 
\[
p(X) = p_n^{} (X \plus c_1^{}) \cdots (X \plus c_n^{}) = p_n^{} (X \plus c_1') \cdots (X \plus c_n'),
\]
with $c_1^{} \geqslant \ldots \geqslant c_n^{}$ and $c_1' \geqslant \ldots \geqslant c_n'$. 
If $\widehat{p}(0) = 0$, we can divide $p(X)$ by $X^{\val p}$ and apply the induction hypothesis. 
Now suppose that $\widehat{p}(0) \neq 0$. 
By Lemma~\ref{lem:partial}, we have 
\[
p'(X) = p_n^{} (X \plus c_1^{}) \cdots (X \plus c_{n-1}^{}) = p_n^{} (X \plus c_1') \cdots (X \plus c_{n-1}').
\] 
From the induction hypothesis we obtain $c_k^{} = c_k'$, for all $1 \leqslant k < n$. 
Moreover, $\widehat{p}(0) = p_n^{} c_1^{} \cdots c_n^{} = p_n^{} c_1' \cdots c_n'$. 
Since $\widehat{p}(0) \neq 0$, we deduce that $c_n^{} = c_n'$. 
It follows that $c_k^{} = c_k'$, for all $1 \leqslant k \leqslant n$, and the result is proved. 
\end{proof}

\begin{figure}
  \begin{center}
    \begin{tikzpicture}
    \node (top) at (3,6) {$(\alpha^2, \alpha \beta, \beta^2)$};
    \node (a1) at (0,4) {$(\alpha, \alpha^2 \beta, \beta^2)$};
    \node (b1) at (3,4) {$(\alpha^2, \beta, \alpha \beta^2)$};
    \node (a2) at (0,2) {$(1, \alpha^2 \beta, \alpha \beta^2)$};
    \node (b2) at (3,2) {$(\alpha, \beta, \alpha^2 \beta^2)$};
    \node (bot) at (6,0) {$(1, \alpha \beta, \alpha^2 \beta^2)$};	
    \draw[->] (top) to[bend right] (a1);
    \draw[->] (top) to (b1);
    \draw[->] (a1) to (a2);
    \draw[->] (a1) to (b2);
    \draw[->] (b1) to (a2);
    \draw[->] (b1) to (b2);
    \draw[->] (a2) to[bend right] (bot);
    \draw[->] (b2) to (bot);
    \draw[->] (top) to[bend left] (bot);
    \end{tikzpicture}
  \end{center}
  \caption{Directed graph of the full baskets of roots (up to permutations) of the polynomial $p(X)$, 
  see Example~\ref{ex:qdiv2}. }
  \label{fig:v}
\end{figure}


\begin{example}[Example~\ref{ex:qdiv} continued]\label{ex:qdiv2}
Let $\alpha, \beta$ be distinct prime numbers in $\mathbb{N}$, and consider the polynomial $p \in \mathbb{Q}_+^{\divi}[X]$ defined by
\[
p(X) = X^3 \vee \alpha^2 \beta^2 X^2 \vee \alpha^3 \beta^3 X \vee \alpha^3 \beta^3. 
\]
Each node of the directed graph of Figure~\ref{fig:v} gives a full basket of roots of $p(X)$. 
We draw an arc between two nodes if we can replace two elements $a, b$ of the source node by $a \vee b, a \wedge b$ in the target node. 
The sink of the directed graph, $(1, \alpha \beta, \alpha^2 \beta^2)$, is the basket of corners of $p(X)$. 
\end{example}

We have seen with Proposition~\ref{prop:convex} that every polynomial function satisfies a certain convexity property. 
We now say that a polynomial $p$ \textit{has the concavity property} if it satisfies 
\[
p_j^{m_1+m_2} \geqslant p_{j_1}^{m_1} p_{j_2}^{m_2}, 
\]
for all $j, j_1, j_2, m_1, m_2 \in \mathbb{N}$ such that $(m_1+m_2) j = m_1 j_1 + m_2 j_2$. 

The following theorem originates from Cuninghame-Green and Meijer \cite[Theorem~8]{Cuninghame80}, who discovered the fundamental result of factorization of polynomials with coefficients in the tropical semifield $\mathbb{R}_{\max}$. 
It notably extends Baccelli et al.\ (\cite[Lemma~3.41]{Baccelli92}, \cite[Theorem~3.43]{Baccelli92}) and Rump \cite[Proposition~3]{Rump15}. 

\begin{theorem}[First fundamental theorem]\label{thm:ft1}
Let $p \in \cis[X]$ be a polynomial of degree $n > 0$. 
Then the following conditions are equivalent: 
\begin{enumerate}
	\item\label{ft11} $p$ is closed; 
	\item\label{ft12} $p$ splits in $\cis$; 
	\item\label{ft13} $p$ has the concavity property;
	\item\label{ft14} $p$ has full support and $c_1 \geqslant \ldots \geqslant c_n$, 
\end{enumerate}
where $c_k := p_{n-k}^{} \cdot p_{n-k+1}^{-1}$ if $p_{n-k+1} \neq 0$, and $c_k := 0$ otherwise. 
If these conditions are satisfied, then 
$
p(X) = p_n (X \plus c_1) \cdots (X \plus c_n)
$, 
and the inf of $\{ \widehat{p}(x) x^{-j} : x \in \cis^* \}$ is reached at $c_{n-j}$, i.e.\ 
\[
p_j^{j+1} = \widehat{p}(c_{n-j}^{}) p_{j+1}^j, 
\]
for every $0 \leqslant j < n$. 
\end{theorem}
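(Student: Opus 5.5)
I would prove the cycle \eqref{ft11} $\Rightarrow$ \eqref{ft14} $\Rightarrow$ \eqref{ft12} $\Rightarrow$ \eqref{ft13} $\Rightarrow$ \eqref{ft11}, and then extract the formula for the inf from the splitting.

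For \eqref{ft11} $\Rightarrow$ \eqref{ft14}, Lemma~\ref{lem:support} already gives full support. For the ordering of the $c_k$, note that a closed polynomial satisfies $p_j x^j \leqslant \widehat{p}(x)$ as an equality of infima. Rather than argue directly, I would instead show closed $\Rightarrow$ concavity, which yields $p_j^2 \geqslant p_{j-1}p_{j+1}$, i.e.\ $c_{n-j} \leqslant c_{n-j+1}$ on the support. Closed $\Rightarrow$ concavity is natural: for $(m_1+m_2)j = m_1 j_1 + m_2 j_2$ and any $x \in \cis^*$ we have $p_{j_1}^{m_1}p_{j_2}^{m_2} x^{(m_1+m_2)j} \leqslant \widehat{p}(x)^{m_1+m_2}$. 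Hence $p_{j_1}^{m_1}p_{j_2}^{m_2} \leqslant \bigwedge_x (\widehat{p}(x)x^{-j})^{m_1+m_2} = p_j^{m_1+m_2}$ by Lemma~\ref{prop:sn}. So I would organize the cycle as \eqref{ft11} $\Rightarrow$ \eqref{ft13} $\Rightarrow$ \eqref{ft14} $\Rightarrow$ \eqref{ft12} $\Rightarrow$ \eqref{ft11}.

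For \eqref{ft13} $\Rightarrow$ \eqref{ft14}, full support comes from concavity with $p_{j_1}, p_{j_2} \neq 0$ around a gap (take $j_1 = \val p$, $j_2 = n$ and suitable multiplicities), which forces $p_j \neq 0$. Monotonicity of the $c_k$ is the case $m_1=m_2=1$, $j_1=j-1$, $j_2=j+1$, with the convention $c_k = 0$ below the valuation.

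For \eqref{ft14} $\Rightarrow$ \eqref{ft12}, I expand $p_n(X+c_1)\cdots(X+c_n)$ by distributivity. Since the $c_k$ are decreasing, the coefficient of $X^j$ is a sum of products of $n-j$ of the $c$'s whose largest term is $c_1\cdots c_{n-j}$. This telescopes to $p_n \cdot p_{j}p_n^{-1} = p_j$ when the support is full. Below the valuation the products vanish, which is consistent with $p_j = 0$.

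For \eqref{ft12} $\Rightarrow$ \eqref{ft11}, Lemma~\ref{lem:split} gives corners $c_1 \geqslant \cdots \geqslant c_n$ with $p = p_n\prod(X+c_k)$, and the previous computation gives $p_j = p_n c_1\cdots c_{n-j}$. Since $p_j \leqslant \widehat{p}(x)x^{-j}$ always holds, it suffices to exhibit one $x$ attaining equality, namely $x = c_{n-j}$ (when it is nonzero). Evaluating $\widehat{p}(c_{n-j}) = p_n\prod_k (c_{n-j}+c_k)$ and using the monotonicity, the factors with $k \leqslant n-j$ give $c_k$ and those with $k > n-j$ give $c_{n-j}$. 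This yields $\widehat{p}(c_{n-j}) = p_n c_1\cdots c_{n-j} c_{n-j}^{j} = p_j c_{n-j}^j$, so the infimum is attained and equals $p_j$. Rewriting with $c_{n-j} = p_j p_{j+1}^{-1}$ gives $p_j^{j+1} = \widehat{p}(c_{n-j})p_{j+1}^j$. Degenerate indices where $c_{n-j} = 0$ correspond to $j < \val p$, where $p_j = 0$ by Lemma~\ref{lem:0n}, and there the stated identity holds trivially. The factorization formula in the conclusion is then exactly \eqref{ft14} $\Rightarrow$ \eqref{ft12}.

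The main obstacle I expect is the full-support part of \eqref{ft13} $\Rightarrow$ \eqref{ft14}, since it needs the right choice of exponents and care with zeros. A fallback is to avoid this step by routing through Lemma~\ref{lem:support} with \eqref{ft11}. However, \eqref{ft13} $\Rightarrow$ \eqref{ft11} is then not direct, so the exponent choice $m_1 = n-j$, $m_2 = j-v$ (with $j_1 = v$, $j_2 = n$) should be checked carefully.
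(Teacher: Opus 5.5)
Your proposal is correct and follows the paper's own cycle \eqref{ft11}$\Rightarrow$\eqref{ft13}$\Rightarrow$\eqref{ft14}$\Rightarrow$\eqref{ft12}$\Rightarrow$\eqref{ft11}. It differs only in two details. You get full support with $j_1=v=\val p$, $j_2=n$, $m_1=n-j$, $m_2=j-v$ rather than the paper's $j_1=j-1$, $m_2=1$. You prove \eqref{ft14}$\Rightarrow$\eqref{ft12} by reading off the dominant term $c_1\cdots c_{n-j}$ of the expanded product, whereas the paper uses induction through Lemma~\ref{lem:partial}. Your treatment of the indices $j<\val p$, where $c_{n-j}=0$, is actually more careful than the paper's. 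The only slip is in your first paragraph: $p_j^2 \geqslant p_{j-1}p_{j+1}$ gives $c_{n-j} \geqslant c_{n-j+1}$, not $\leqslant$, and this is exactly the direction needed for $c_1 \geqslant \cdots \geqslant c_n$.
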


\begin{proof}
\eqref{ft14} $\Rightarrow$ \eqref{ft12}. 
We prove this implication by induction on $n$. 
If $n = 1$ this is clear. 
Suppose that it holds for $n-1$ with $n > 1$. 
Take $q(X) := p_n (X \plus c_1) \cdots (X \plus c_n)$. 
Then $q'(X) = p_n (X \plus c_1) \cdots (X \plus c_{n-1})$ by Lemma~\ref{lem:partial}. 
Since $p'(X)$ has full support, the induction property applies and we obtain $p'(X) = q'(X)$. 
Moreover, since $p(X)$ has full support, we have $p_0 = p_n c_1 \cdots c_n = q_0$.  
Thus, $p(X) = X p'(X) \plus p_0 = X q'(X) \plus p_0 = X q'(X) \plus q_0 = q(X)$. 
Therefore, the induction property holds for $n$. 


\eqref{ft12} $\Rightarrow$ \eqref{ft11}. 
By Lemma~\ref{lem:split} we can write $p(X) = p_n (X \plus c_1) \cdots (X \plus c_n)$, with $c_1 \geqslant \ldots \geqslant c_n$, so that 
\[
\widehat{p}(x) = p_n (x \plus c_1) \cdots (x \plus c_n),  
\]
for all $x \in \cis$. 
Now if $0 \leqslant j < n$, we show that $p_j$ is the inf of $A(p, j) := \{ \widehat{p}(x) x^{-j} : x \in \cis^* \}$. 
We already know that $p_j$ is a lower bound of $A(p,j)$. 
Moreover, $\widehat{p}(c_{n-j}^{}) c_{n-j}^{-j} = p_n^{} c_1^{} \cdots c_{n-j}^{} = p_j^{}$. 
Hence $p_j$ is the inf of $A(p,j)$, and the inf is reached at $c_{n-j}$. 
This shows that $p$ is closed. 


\eqref{ft11} $\Rightarrow$ \eqref{ft13}. 
Let $j, j_1, j_2, m_1, m_2 \in \mathbb{N}$ such that $(m_1+m_2) j = m_1 j_1 + m_2 j_2$. 
If $x \in \cis^*$, then 
\[
{ (\widehat{p}(x) x^{-j})^{m_1+m_2} } = { \widehat{p}(x)^{m_1} x^{-m_1 j_1} \widehat{p}(x)^{m_2} x^{-m_2 j_2} } \geqslant { p_{j_1}^{m_1} p_{j_2}^{m_2} }. 
\]
Thanks to Lemma~\ref{prop:sn} we deduce that 
\[
{ p_j^{m_1+m_2} } = { \left(\bigwedge_{x \in \cis^*} \widehat{p}(x) x^{-j}\right)^{m_1+m_2} } = { \bigwedge_{x \in \cis^*} (\widehat{p}(x) x^{-j})^{m_1+m_2} } \geqslant { p_{j_1}^{m_1} p_{j_2}^{m_2} }. 
\]
This proves that $p$ has the concavity property. 

\eqref{ft13} $\Rightarrow$ \eqref{ft14}. 
For all $0 < j \leqslant n$, the concavity property implies  
\[
p_j^{n-j+1} \geqslant p_{j-1}^{n-j} p_n^{}, 
\]
and
\[
p_j^2 \geqslant p_{j-1}^{} p_{j+1}^{}.   
\] 
The former inequality implies that $p$ has full support. 
Combined with the latter inequality this shows that ($p$ has full support and) $c_1 \geqslant \cdots \geqslant c_n$. 
\end{proof}

\begin{example}\label{ex:deg2}
Let $p(X) = a X^2 \plus b X \plus c$ be a polynomial of $\cis[X]$ of degree $2$ ($a \neq 0$). 
Then $p$ is closed if and only if $b^2 \geqslant a c$. 
In this case, $p(X) = a (X \plus a^{-1} b) (X \plus b^{-1} c)$ if $b \neq 0$, and $p(X) = a X^2$ otherwise. 
\end{example}

Given a semifield $\cis$, an \textit{extension} of $\cis$ consists of a semifield $K$ together with an injective morphism of semifields $\cis \rightarrow K$. 
We identify $\cis$ with its image in $K$ by the injective morphism. 
Note that $\cis$ is idempotent if and only if every of its extensions is idempotent. 

\begin{corollary}\label{coro:sp}
Let $p \in \cis[X]$ of degree $n > 0$. 
Then $p(X)$ splits (is closed) in some extension of $\cis$ if and only if $p(X)$ splits (is closed) in $\cis$. 
\end{corollary}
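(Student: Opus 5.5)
The plan is to reduce everything to condition \eqref{ft14} of Theorem~\ref{thm:ft1}, which is intrinsic: it only involves the coefficients $p_j$, equalities and non-equalities with $0$, products, inverses, and the order. Let $K$ be an extension of $\cis$ via an injective semifield morphism $\iota$. Since $K$ is a semifield, $\iota$ preserves $+$, $\times$, $0$, $1$ and inverses. Because the order is defined by $x \leqslant y \Leftrightarrow x + y = y$, the map $\iota$ is an order-embedding: $x \leqslant y$ in $\cis$ iff $\iota(x)+\iota(y)=\iota(x+y)=\iota(y)$ iff $\iota(x)\leqslant\iota(y)$ in $K$, using injectivity for the middle step. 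Note also that $K$ is idempotent, so Theorem~\ref{thm:ft1} applies over $K$ as well. (If $K$ were finite it would be $\mathbb{B}$ and so would $\cis$, which we excluded; in any case the proof of Theorem~\ref{thm:ft1} does not seem to use infiniteness.)

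The key steps, in order, are as follows.

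\emph{(i)} The image of $p$ in $K[X]$ has the same degree $n$ and the same support as $p$, because $\iota$ is injective and maps $0$ to $0$. So full support holds in $K$ iff it holds in $\cis$.

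\emph{(ii)} The elements $c_k = p_{n-k}p_{n-k+1}^{-1}$ (or $0$) computed in $K$ are exactly $\iota(c_k)$, since $\iota$ respects inverses.

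\emph{(iii)} The chain $c_1 \geqslant \cdots \geqslant c_n$ holds in $K$ iff it holds in $\cis$, by the order-embedding property.

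Hence \eqref{ft14} holds in $K$ iff it holds in $\cis$. By Theorem~\ref{thm:ft1}, applied over $K$ and over $\cis$, this gives that $p$ is closed in $K$ iff $p$ splits in $K$ iff $p$ splits in $\cis$ iff $p$ is closed in $\cis$.

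The direction ``splits in $\cis$ implies splits in $K$'' is of course trivial directly. The real content is the converse: a factorization $p = p_n(X+r_1)\cdots(X+r_n)$ with $r_i\in K$ yields a factorization over $\cis$. The main subtle point is that the notion of ``closed in $K$'' involves infima over $K^*$, which is a different set from $\cis^*$. Because of this, one must not try to compare the infima directly; one should pass through \eqref{ft14}, or equivalently the concavity property \eqref{ft13}, both of which are purely equational or order conditions on the coefficients.

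Alternatively, one can argue the converse concretely. By Lemma~\ref{lem:split} the corners in $K$ are forced to be $c_k = p_{n-k}p_{n-k+1}^{-1}$ by the final formula of Theorem~\ref{thm:ft1}, and these already lie in $\iota(\cis)$. The factorization $p = p_n\prod(X+c_k)$ is then an identity of polynomials with coefficients in $\iota(\cis)$, and pulling it back by the injective $\iota$ gives the factorization in $\cis$.
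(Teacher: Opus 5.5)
Your proof is correct, and your closing ``concrete'' argument is exactly the paper's proof. The paper applies Theorem~\ref{thm:ft1} in $K$ and observes that the corners $c_k = p_{n-k}p_{n-k+1}^{-1}$ (or $0$) already lie in $\cis$, so the factorization descends to $\cis$. It leaves the ``closed'' variant implicit, since closed and split are equivalent over both $K$ and $\cis$ by the same theorem.

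Your main route differs slightly: it transports condition~\eqref{ft14} instead of the factorization itself. This needs the extra (correct) observation that an injective semifield morphism is an order-embedding, so that the chain $c_1 \geqslant \cdots \geqslant c_n$ is reflected back into $\cis$. The paper's route needs only injectivity, because an identity of polynomials whose coefficients lie in the image of $\cis$ pulls back directly. Yours has the small advantage of handling the split and closed versions in one stroke. It also makes explicit why infima over $K^*$ never have to be compared with infima over $\cis^*$.

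One aside of yours is inaccurate: Theorem~\ref{thm:ft1} does use $\cis \neq \mathbb{B}$. In the implication \eqref{ft12}$\Rightarrow$\eqref{ft11}, closedness also requires $\overline{p}_j = 0$ for $j > n$, which comes from Lemma~\ref{lem:0n}\eqref{lem:0n3} and hence from Lemma~\ref{lem:inf0bis}. Over $\mathbb{B}$ one has $\overline{p}_j = \widehat{p}(1) = 1$ for every $j$, so for instance $X + 1$ splits but is not closed. This is harmless here, because $K$ contains a copy of the infinite semifield $\cis$ and is therefore infinite, as you also note.
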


\begin{proof}
Let $K$ be an extension of $\cis$ such that $p(X)$ splits in $K$. 
By Theorem~\ref{thm:ft1}, we have $p(X) = p_n (X \plus c_1) \cdots (X \plus c_n)$, where $c_k := p_{n-k}^{} \cdot p_{n-k+1}^{-1}$ if $p_{n-k+1} \neq 0$, and $c_k = 0$ otherwise. 
We have $c_k \in \cis$ for all $k$, hence $p(X)$ splits in $\cis$. 
\end{proof}

\subsection{Completion of baskets of roots}

While the sum of polynomials does not preserve closability or closedness, various operations have better properties. 
We investigate this question in the following lines; this leads us to a result on completion of baskets of roots of a closed polynomial. 

\begin{lemma}\label{lem:prime}
The derivative of a closed polynomial is closed. 
\end{lemma}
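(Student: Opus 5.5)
We use the equivalence \eqref{ft11} $\Leftrightarrow$ \eqref{ft12} of Theorem~\ref{thm:ft1} together with Lemma~\ref{lem:partial}. The idea is to transfer closedness to splitting, differentiate the splitting, and transfer back.

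Let $p \in \cis[X]$ be closed, with $n = \deg p$. There are two degenerate cases. If $n = 0$, then $p' = 0$. If $n = 1$, then $p'$ is the constant polynomial $p_1$. In both cases $p'$ has degree $\leqslant 0$, and it is closed by the remark that every polynomial of degree $\leqslant 1$ is closed. (That remark can also be checked directly from Lemma~\ref{lem:0n}: for a constant $c \neq 0$ we have $\overline{c}_0 = c$ and $\overline{c}_j = 0$ for $j > 0$; the zero polynomial is trivially closed.)

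Now assume $n \geqslant 2$. By Theorem~\ref{thm:ft1}, since $p$ is closed it splits, and
\[
p(X) = p_n (X \plus c_1) \cdots (X \plus c_n)
\]
with $c_1 \geqslant \ldots \geqslant c_n$. Lemma~\ref{lem:partial} then gives
\[
p'(X) = p_n (X \plus c_1) \cdots (X \plus c_{n-1}).
\]
Since $p'_{n-1} = p_n$, this says exactly that $p'$, a polynomial of degree $n-1 > 0$, splits in $\cis$. Applying Theorem~\ref{thm:ft1} again, now to $p'$, shows that $p'$ is closed.

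An alternative route goes through condition \eqref{ft14}: the coefficients $p'_j = p_{j+1}$ inherit full support from $p$, and the ratios for $p'$ are $c_1, \ldots, c_{n-1}$, which are still decreasing. Either way, there is no real obstacle. The only point needing care is the low-degree case, where Theorem~\ref{thm:ft1} requires positive degree and so does not apply.
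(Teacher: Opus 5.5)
Your proof is correct and follows the paper's own argument, which simply combines Lemma~\ref{lem:partial} with Theorem~\ref{thm:ft1}: closed implies split, differentiating keeps the split form, and split implies closed. Your separate treatment of the low-degree cases, where Theorem~\ref{thm:ft1} cannot be applied to $p'$ because it would have degree $\leqslant 0$, is a useful bit of care that the paper leaves implicit; the case $p = 0$ is equally trivial, since then $p' = 0$.
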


\begin{proof}
Combine Lemma~\ref{lem:partial} with Theorem~\ref{thm:ft1}. 
\end{proof}

\begin{lemma}\label{lem:inf}
The inf of two closed polynomials is closed. 
\end{lemma}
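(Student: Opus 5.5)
We check closedness of $p \wedge q$ through the concavity property of Theorem~\ref{thm:ft1}, condition~\eqref{ft13}, rather than computing infima of $\widehat{p \wedge q}$ directly. Concavity is a family of inequalities on coefficients, and such inequalities should pass to coefficientwise infima.

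Let $p, q \in \cis[X]$ be closed and set $r := p \wedge q$, so that $r_j = p_j \wedge q_j$ for all $j$. We first dispose of degenerate cases. Every polynomial of degree $\leqslant 1$ is closed. The zero polynomial is also closed, since $\overline{0}_j = \bigwedge_{x \in \cis^*} 0 = 0$. If $p$ (or $q$) has degree $\leqslant 0$, then $r$ has degree $\leqslant 0$ and is closed. So we may assume $\deg p, \deg q > 0$, and also $\deg r > 0$ (otherwise we are done).

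By the implication \eqref{ft11} $\Rightarrow$ \eqref{ft13} of Theorem~\ref{thm:ft1}, both $p$ and $q$ have the concavity property. Now fix $j, j_1, j_2, m_1, m_2 \in \mathbb{N}$ with $(m_1+m_2) j = m_1 j_1 + m_2 j_2$, and put $m := m_1 + m_2$. The key chain of inequalities is
\begin{align*}
r_j^{m} = (p_j \wedge q_j)^{m} &= p_j^{m} \wedge q_j^{m} \\
&\geqslant \bigl(p_{j_1}^{m_1} p_{j_2}^{m_2}\bigr) \wedge \bigl(q_{j_1}^{m_1} q_{j_2}^{m_2}\bigr) \\
&\geqslant r_{j_1}^{m_1} r_{j_2}^{m_2}.
\end{align*}
The justifications are as follows.
\begin{itemize}
\item The second equality is Lemma~\ref{prop:sn} applied to the two-element set $\{p_j, q_j\}$.
\item The first inequality uses the concavity of $p$ and of $q$ together with monotonicity of $\wedge$.
\item The last inequality holds because $r_{j_i} \leqslant p_{j_i}$ and $r_{j_i} \leqslant q_{j_i}$. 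Multiplication is monotone, so $r_{j_1}^{m_1} r_{j_2}^{m_2}$ is a lower bound of both products, hence lies below their inf.
\end{itemize}
Thus $r$ has the concavity property. Since $\deg r > 0$, the implication \eqref{ft13} $\Rightarrow$ \eqref{ft11} of Theorem~\ref{thm:ft1} gives that $r$ is closed.

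There is no real obstacle here. The only points needing care are the degree hypothesis $n > 0$ in Theorem~\ref{thm:ft1}, which the case analysis above handles, and the fact that concavity is required for all indices, including those outside the supports. The latter causes no trouble, since the chain of inequalities is purely coefficientwise and holds whether or not some of the coefficients involved vanish.
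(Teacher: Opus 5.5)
Your proof is correct and follows the paper's argument: you get the concavity property of $p$ and $q$ from Theorem~\ref{thm:ft1}, bound $r_{j_1}^{m_1} r_{j_2}^{m_2}$ below both $p_j^{m}$ and $q_j^{m}$, use $r_j^{m} = p_j^{m} \wedge q_j^{m}$, and conclude by the implication \eqref{ft13} $\Rightarrow$ \eqref{ft11}. Your preliminary treatment of the case $\deg r \leqslant 0$ covers a small point the paper leaves implicit, since Theorem~\ref{thm:ft1} assumes positive degree.
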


\begin{proof}
Let $p, q \in \cis[X]$ be closed, take $r := p \wedge q$, and let $j, j_1, j_2, m_1, m_2 \in \mathbb{N}$ such that $(m_1+m_2) j = m_1 j_1 + m_2 j_2$. 
By Theorem~\ref{thm:ft1}, $p$ and $q$ have the concavity property, hence 
\begin{align*}
p_j^{m_1+m_2} &\geqslant p_{j_1}^{m_1} p_{j_2}^{m_2} \geqslant r_{j_1}^{m_1} r_{j_2}^{m_2},  \mbox{ and } \\
q_j^{m_1+m_2} &\geqslant q_{j_1}^{m_1} q_{j_2}^{m_2} \geqslant r_{j_1}^{m_1} r_{j_2}^{m_2}. 
\end{align*}
Thus, 
\[
r_j^{m_1+m_2} = p_j^{m_1+m_2} \wedge q_j^{m_1+m_2} \geqslant r_{j_1}^{m_1} r_{j_2}^{m_2}. 
\]
This proves that $r = p \wedge q$ has the concavity property, hence it is closed. 
%
\end{proof}

\begin{lemma}\label{lem:quotient}
The quotient of a closed polynomial by a non-zero polynomial is closed. 
\end{lemma}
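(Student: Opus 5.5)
Let $p$ be closed and $q \neq 0$. Crosby's formula \eqref{eq:crosby} writes the quotient as a finite inf:
\[
p/q = \bigwedge_{i \in \supp q} q_i^{-1} p^{(i)}.
\]
The plan is to show that each term of this inf is closed and then apply Lemma~\ref{lem:inf}. Throughout, closedness is tested through the concavity property of Theorem~\ref{thm:ft1}, which is equivalent to closedness for polynomials of degree $> 0$.

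First, by Lemma~\ref{lem:prime} and induction on $i$, every iterated derivative $p^{(i)}$ is closed, as long as its degree is positive. If $\deg p^{(i)} \leqslant 0$, then $p^{(i)}$ is a constant, and constants should be counted as closed. This is consistent with the remark that every polynomial of degree $\leqslant 1$ is closed; for a constant $c$, Lemma~\ref{lem:0n} gives $\overline{c}_0 = c$ and $\overline{c}_j = 0$ for $j > 0$. The zero polynomial is also closed, since its closure is zero. Second, multiplying by the scalar $q_i^{-1}$ preserves closedness: the concavity inequalities are homogeneous, because $(m_1+m_2)$ equals the total exponent on the right-hand side. Alternatively, $\lambda p$ still splits, with the same corners.

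Next, Lemma~\ref{lem:inf} extends by induction to finite infs, which yields closedness of $p/q$. The one point to check is that Lemma~\ref{lem:inf} and the concavity criterion are stated for degree $> 0$, whereas here some terms, or the inf itself, may be constant or zero. I would handle this by working directly with the concavity property. Concavity is preserved under finite infs by exactly the computation in the proof of Lemma~\ref{lem:inf}; that computation uses only $a^m \wedge b^m = (a \wedge b)^m$, which is Lemma~\ref{prop:sn}. It is also trivially satisfied by constants and by $0$.

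It then remains to note that a polynomial with the concavity property is closed even when its degree is $\leqslant 0$. This holds because every polynomial of degree $\leqslant 1$ is closed, and for $\deg > 0$ it is Theorem~\ref{thm:ft1}. I expect the main obstacle to be this degenerate-degree bookkeeping rather than any substantive step: the core argument is simply that $p/q$ is a finite inf of scalar multiples of derivatives of $p$.
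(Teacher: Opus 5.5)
Your proposal is correct and follows the paper's proof. You write $p/q$ via Crosby's formula \eqref{eq:crosby} as a finite inf of the polynomials $q_i^{-1}p^{(i)}$, each closed by Lemma~\ref{lem:prime}, so that closedness passes to the inf by Lemma~\ref{lem:inf}. Your extra bookkeeping (scalar multiples via homogeneity of the concavity inequalities, and constant or zero terms and infs) only makes explicit the degenerate cases that the paper's two-line proof leaves implicit.
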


\begin{proof}
Let $p, q \in \cis[X]$, with $p$ closed and $q \neq 0$. 
By Lemma~\ref{lem:prime}, the successive derivatives $p^{(1)}, p^{(2)}, \ldots$ are closed. 
Thus, by Lemma~\ref{lem:inf}, the inf 
$
\bigwedge_{i \in \supp q} q_i^{-1} p^{(i)}
$  
is closed, and this inf coincides with the quotient $p / q$ by Theorem~\ref{thm:crosby}. 
%
\end{proof}

\begin{lemma}\label{lem:product}
The product of two closed (resp.\ closable) polynomials is closed (resp.\ closable). 
Moreover, 
\[
\overline{p \cdot q} = \overline{p} \cdot \overline{q}, 
\]
for all closable polynomials $p, q \in \cis[X]$. 
\end{lemma}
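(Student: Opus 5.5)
Two ingredients do all the work: Theorem~\ref{thm:ft1}, which identifies closed polynomials of positive degree with split polynomials, and Lemma~\ref{lem:pbar}, which says that $\overline{p}$ is the greatest polynomial having the same polynomial function as $p$.

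\emph{Closed case.} Let $p,q$ be closed. If either has degree $\leqslant 0$, it is a monomial constant $\lambda$ (or zero). Then $p\cdot q$ is a scalar multiple of a closed polynomial, and closedness is preserved under scalar multiplication: $\widehat{\lambda p}(x)x^{-j}=\lambda\,\widehat{p}(x)x^{-j}$ together with \eqref{eq:infsupf}. Otherwise, by Theorem~\ref{thm:ft1}, both $p$ and $q$ split:
\[
p=p_n\prod_{i}(X\plus c_i), \qquad q=q_m\prod_{k}(X\plus d_k).
\]
Hence $p\cdot q=p_nq_m\prod_i(X\plus c_i)\prod_k(X\plus d_k)$ also splits, and $p_nq_m$ is its leading coefficient. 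By Theorem~\ref{thm:ft1} again, $p\cdot q$ is closed.

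\emph{Closable case.} Let $p,q$ be closable. By Lemma~\ref{lem:pbar}, $\overline{p}$ and $\overline{q}$ are closed, and by the closed case $\overline{p}\cdot\overline{q}$ is closed. Since $p\mapsto\widehat{p}$ is a semiring morphism and $\widehat{p}=\widehat{\overline{p}}$, $\widehat{q}=\widehat{\overline{q}}$, we get $\widehat{p\cdot q}=\widehat{p}\,\widehat{q}=\widehat{\overline{p}\cdot\overline{q}}$. Closability of a polynomial depends only on its polynomial function, since the infs defining it are taken over the sets $\{\widehat{\cdot}(x)x^{-j}\}$. The closed polynomial $\overline{p}\cdot\overline{q}$ is in particular closable, so $p\cdot q$ is closable. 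Moreover its closure coincides with that of $\overline{p}\cdot\overline{q}$, namely $\overline{p}\cdot\overline{q}$ itself, because closedness means the closure equals the polynomial. This gives $\overline{p\cdot q}=\overline{p}\cdot\overline{q}$.

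The only real obstacle is already handled by Theorem~\ref{thm:ft1}: proving directly from the infimum definition that a product of closed polynomials is closed would be messy, and the split characterization bypasses this. The remaining care is with the degenerate degree-$0$ and zero cases, which are handled by scalar multiplication as above; the zero polynomial is trivially closed, since every $\overline{0}_j=\bigwedge\{0\}=0$.
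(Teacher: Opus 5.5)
Your proof is correct. For closed polynomials you argue as the paper does: both factors split by Theorem~\ref{thm:ft1}, hence so does the product. You also handle constant and zero factors, which the paper skips even though Theorem~\ref{thm:ft1} is only stated in positive degree.

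For closable polynomials your route differs. The paper argues order-theoretically, using Crosby's Theorem~\ref{thm:crosby} and Lemma~\ref{lem:quotient}. Any closed $r \geqslant p\cdot q$ satisfies $p \leqslant r/q$ with $r/q$ closed, hence $\overline{p} \leqslant r/q$, i.e.\ $\overline{p}\cdot q \leqslant r$. Repeating the argument gives $\overline{p}\cdot\overline{q} \leqslant r$, so $\overline{p}\cdot\overline{q}$ is the least closed polynomial above $p\cdot q$. You instead use that $p\mapsto\widehat{p}$ is multiplicative and that closability and the closure depend only on the polynomial function. Then $\widehat{p\cdot q}=\widehat{\overline{p}\cdot\overline{q}}$ transfers everything from the closed polynomial $\overline{p}\cdot\overline{q}$.

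Your argument is shorter, avoids residuation and Lemma~\ref{lem:quotient}, and checks the definition of closability (existence of the infima) directly. The paper's argument fits its subsection's theme of operations preserving closedness and exhibits the minimality property. However, it leaves implicit the step from ``least closed polynomial above $p\cdot q$'' to ``$p\cdot q$ is closable,'' and your identity $\widehat{p\cdot q}=\widehat{\overline{p}\cdot\overline{q}}$ is the quickest way to supply it.

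One small citation point: closedness of $\overline{p}$ is the lemma following Lemma~\ref{lem:pbar}, not Lemma~\ref{lem:pbar} itself. It does follow from Lemma~\ref{lem:pbar}\eqref{lem:pbar3} by the same ``depends only on $\widehat{p}$'' principle you invoke.
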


\begin{proof}
Let $p, q \in \cis[X]$ be closed. 
By Theorem~\ref{thm:ft1}, $p$ and $q$ split in $\cis$, so their product splits in $\cis$, hence is closed. 

Now let $p, q \in \cis[X]$ be closable. 
If $p = 0$ or $q = 0$, then $p \cdot q = 0$ is closable, so we can suppose that $p \neq 0$ and $q \neq 0$. 
By the first part of the proof, $\overline{p} \cdot \overline{q}$ is closed; let us show that it is the least closed polynomial greater than $p \cdot q$. 
Let $r$ be a closed polynomial with $p \cdot q \leqslant r$. 
Then $p \leqslant r / q$, and since $r / q$ is closed by Lemma~\ref{lem:quotient}, we deduce that $\overline{p} \leqslant r / q$, hence $\overline{p} \cdot q \leqslant r$. 
Applying the same argument, we obtain $\overline{p} \cdot \overline{q} \leqslant r$. 
This proves that $p \cdot q$ is closable and $\overline{p \cdot q} = \overline{p} \cdot \overline{q}$. 
\end{proof}

\begin{proposition}[Completion of baskets of roots]
Let $p \in \cis[X]$ be a closed polynomial of degree $n > 0$. 
If $(r_1, \ldots, r_k)$, $k < n$, is a basket of roots of $p(X)$, then there exists some $r_{k+1}, \ldots, r_n \in \cis$ such that $(r_1, \ldots, r_n)$ is a full basket of roots of $p(X)$. 
\end{proposition}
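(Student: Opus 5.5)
Set $r(X) := (X \plus r_1) \cdots (X \plus r_k)$. The hypothesis says that $r$ is a factor of $p$. By Corollary~\ref{coro:crosby}, this means $p = (p/r) \cdot r$, and we write $q := p/r$ for the Crosby quotient. The plan is to show that $q$ splits in $\cis$. Then, writing $q(X) = q_m (X \plus r_{k+1}) \cdots (X \plus r_{k+m})$, we obtain $p(X) = q_m (X \plus r_1) \cdots (X \plus r_{k+m})$, which is exactly a full basket of roots once we check that $k + m = n$.

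First, $q$ is closed by Lemma~\ref{lem:quotient}, since $p$ is closed and $r \neq 0$.

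Second, we need the degree of $q$. Since $p \neq 0$ and $\cis[X]$ has no zero divisors, we have $q \neq 0$. The degree is additive for products in $\cis[X]$: the leading coefficient of a product is the product of the leading coefficients, and this product is non-zero because $\cis$ is a semifield. Hence $\deg q = n - k > 0$.

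Third, apply Theorem~\ref{thm:ft1} to $q$, which is closed of degree $n-k > 0$. This gives $q(X) = q_{n-k}(X \plus c'_1) \cdots (X \plus c'_{n-k})$. Set $r_{k+j} := c'_j$.

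Finally, compare leading coefficients. The leading coefficient of $p = q \cdot r$ is $q_{n-k} \cdot 1$, so $q_{n-k} = p_n$. Therefore
\[
p(X) = (X \plus r_1) \cdots (X \plus r_n) \cdot p_n ,
\]
which shows that $(r_1, \ldots, r_n)$ is a full basket of roots, with the leftover factor being the constant polynomial $p_n$.

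The only delicate point is making sure the quotient $p/r$ genuinely recovers $p$ after multiplication by $r$, and that $\deg(p/r)$ is $n-k$ rather than smaller; the remark after Crosby's theorem warns that in general the degree of $p/q$ may drop. Both issues are settled by Corollary~\ref{coro:crosby} together with degree additivity. Everything else follows from Lemma~\ref{lem:quotient} and the first fundamental theorem.
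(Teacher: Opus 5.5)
Your proof is correct and takes essentially the same approach as the paper: form the product of the given linear factors, use Corollary~\ref{coro:crosby} to write $p$ as this product times the Crosby quotient, observe that the quotient is closed of degree $n-k$ by Lemma~\ref{lem:quotient}, and split it via Theorem~\ref{thm:ft1}. Your degree-additivity and leading-coefficient check spells out what the paper leaves implicit; note that the appeal to $\cis[X]$ having no zero divisors is unnecessary for $q \neq 0$, since $q = 0$ would already force $p = q r = 0$.
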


\begin{proof}
Take $q(X) := (X \plus r_1) \cdots (X \plus r_k)$. 
Then the quotient $p / q$ is closed by Lemma~\ref{lem:quotient}. 
Moreover, $q(X)$ is a factor of $p(X)$, so by Corollary~\ref{coro:crosby} we have $p = q \cdot (p / q)$. 
It follows that $p / q$ is of degree $n - k$, and by Theorem~\ref{thm:ft1} we can write 
\[
(p / q)(X) = p_n (X \plus r_{k+1}) \cdots (X \plus r_n),
\]
for some $r_{k+1}, \ldots, r_n \in \cis$. 
Thus, $p(X) = p_n (X \plus r_1) \cdots (X \plus r_n)$, and $(r_1, \ldots, r_n)$ is a full basket of roots of $p(X)$. 
\end{proof}





\subsection{Algebraically closed idempotent semifields and second fundamental theorem}

Because of Corollary~\ref{coro:sp}, it is purposeless to look for an extension of a given idempotent semifield $\cis$ in which every $p \in \cis[X]$ splits. 
Yet it still makes sense to investigate extensions where factorization of polynomial functions always holds. 
We say that a polynomial function $\widehat{p}$ \textit{splits} in $\cis$ if it can be written as 
\[
\widehat{p}(x) = p_n (x \plus r_1) \cdots (x \plus r_n), 
\]
for all $x \in \cis$, where $n = \deg p > 0$ and $r_1, \ldots, r_n \in \cis$. 
In this case one can demand that $r_1 \geqslant \ldots \geqslant r_n$ (apply Lemma~\ref{lem:split} to the polynomial $p_n (X \plus r_1) \cdots (X \plus r_n)$). 

The following result extends Baccelli et al.\ \cite[Theorem~3.89]{Baccelli92}. 

\begin{theorem}[Second fundamental theorem]\label{thm:second}
Let $p \in \cis[X]$ be a polynomial of degree $n > 0$. 
Then the following conditions are equivalent:
\begin{enumerate}
  \item $p$ is closable;
  \item $\widehat{p}$ splits in $\cis$. 
\end{enumerate}
In this case, $\widehat{p}(x) = p_n (x \plus c_1) \cdots (x \plus c_n)$, for all $x \in \cis$, with $c_1 \geqslant \ldots \geqslant c_n$ the corners of $\overline{p}$, and the inf of $\{ \widehat{p}(x) x^{-j} : x \in \cis^* \}$ is reached at $c_{n-j}$, for all $0 \leqslant j < n$. 
\end{theorem}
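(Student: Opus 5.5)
Both directions reduce to the first fundamental theorem (Theorem~\ref{thm:ft1}) applied to a suitable closed polynomial.

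For (1) $\Rightarrow$ (2), assume $p$ is closable. By the lemma following Lemma~\ref{lem:pbar}, $\overline{p}$ is closed. By Lemma~\ref{lem:pbar}\eqref{lem:pbar1} and Lemma~\ref{lem:0n}\eqref{lem:0n2}, it has degree $n$ and leading coefficient $\overline{p}_n = p_n$. Theorem~\ref{thm:ft1} then gives
\[
\overline{p}(X) = p_n (X \plus c_1) \cdots (X \plus c_n),
\]
with $c_1 \geqslant \ldots \geqslant c_n$ the corners of $\overline{p}$. Passing to polynomial functions, which is a semiring morphism, and using $\widehat{p} = \widehat{\overline{p}}$ from Lemma~\ref{lem:pbar}\eqref{lem:pbar3}, we get $\widehat{p}(x) = p_n (x \plus c_1) \cdots (x \plus c_n)$ for all $x \in \cis$. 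So $\widehat{p}$ splits. For the attainment claim, Theorem~\ref{thm:ft1} says $\bigwedge_{x} \widehat{\overline{p}}(x)x^{-j} = \overline{p}_j$ is reached at $c_{n-j}$. Since $\widehat{p} = \widehat{\overline{p}}$, the sets $\{\widehat{p}(x)x^{-j}\}$ and $\{\widehat{\overline{p}}(x)x^{-j}\}$ coincide, so the same holds for $p$.

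For (2) $\Rightarrow$ (1), assume $\widehat{p}(x) = p_n (x \plus r_1) \cdots (x \plus r_n)$ for all $x$, and set $q(X) := p_n (X \plus r_1) \cdots (X \plus r_n)$. Then $q$ splits in $\cis$, so it is closed by Theorem~\ref{thm:ft1}, and in particular closable. By construction $\widehat{q} = \widehat{p}$. Closability depends only on the polynomial function: the sets $\{\widehat{p}(x)x^{-j} : x \in \cis^*\}$ and $\{\widehat{q}(x)x^{-j} : x \in \cis^*\}$ are equal for every $j$. Hence $p$ is closable, with $\overline{p} = \overline{q} = q$.

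The identification of the corners is the step needing care, because the $r_i$ themselves need not be the corners of $\overline{p}$, and the stated formula must use the latter. Uniqueness of corners (Lemma~\ref{lem:split}) applied to the closed polynomial $\overline{p} = q$ shows that reordering the $r_i$ into the basket of corners is legitimate. That basket is then exactly the corners of $\overline{p}$ given by Theorem~\ref{thm:ft1}\eqref{ft14}, namely $c_k = \overline{p}_{n-k}\,\overline{p}_{n-k+1}^{-1}$. No genuine obstacle is expected.
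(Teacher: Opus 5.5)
Your proof is correct. Direction (1) $\Rightarrow$ (2) and the attainment claim are essentially the paper's argument: close $p$, factor $\overline{p}$ by the first fundamental theorem, and use $\widehat{p} = \widehat{\overline{p}}$.

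For (2) $\Rightarrow$ (1) you take a different and shorter route. You lift the splitting of $\widehat{p}$ to the formal polynomial $q = p_n(X+r_1)\cdots(X+r_n)$. The first fundamental theorem makes $q$ closed, and you transfer closability through the equality of the sets $\{\widehat{p}(x)x^{-j} : x \in \cis^*\}$ and $\{\widehat{q}(x)x^{-j} : x \in \cis^*\}$; this is the same observation as in Lemma~\ref{lem:pbar}\eqref{lem:pbar4}.

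The paper instead works directly with the function. It writes $\widehat{p}(x) = p_n(x+c_1)\cdots(x+c_n)$ with $c_1 \geqslant \cdots \geqslant c_n$ and sets $q_j = p_n c_1\cdots c_{n-j}$ for $\val p < j < \deg p$. It proves the lower bound $q_j \leqslant \widehat{p}(x)x^{-j}$ through the explicit inequality $c_1\cdots c_{n-j} \leqslant (x+c_1)\cdots(x+c_{n-j})(1+c_{n-j+1}x^{-1})\cdots(1+c_nx^{-1})$. It then checks attainment at $c_{n-j}$, noting that $c_{n-j}\neq 0$.

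Your version makes that inequality unnecessary. Since $q_j$ is a coefficient of $q$, the bound $q_jx^j \leqslant \widehat{q}(x) = \widehat{p}(x)$ is immediate, and you get $\overline{p} = q$ for free. The paper's version is self-contained and does not re-invoke the first fundamental theorem.

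One wording point about your last paragraph. The basket of corners is in general \emph{not} a reordering of $(r_1,\ldots,r_n)$; see Example~\ref{ex:qdiv2}. What you actually use is that $q = \overline{p} = p_n(X+c_1)\cdots(X+c_n)$ as formal polynomials. In any case that paragraph is redundant: once $p$ is known to be closable, your (1) $\Rightarrow$ (2) argument already yields the formula with the corners of $\overline{p}$.
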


\begin{proof}
First, assume that $p(X)$ is closable, and let us show that the polynomial function $\widehat{p}$ splits in $\cis$. 
Since $\overline{p}$ is a closed polynomial, we know by the first fundamental theorem that we can write 
\[
\overline{p}(X) = p_n (X \plus r_1) \cdots (X \plus r_n),  
\]
for some $r_1, \ldots, r_n \in \cis$, where $n = \deg p$. 
As a consequence,  
$
\widehat{\overline{p}}(x) = p_n (x \plus r_1) \cdots (x \plus r_n)
$, 
for all $x \in \cis$. 
By Lemma~\ref{lem:pbar}\eqref{lem:pbar3}, we have $\widehat{\overline{p}}(x) = \widehat{p}(x)$ for all $x \in \cis$, so that 
\[
\widehat{p}(x) = p_n (x \plus r_1) \cdots (x \plus r_n),  
\]
for all $x \in \cis$. 
This shows that the polynomial function $\widehat{p}$ splits in $\cis$. 

Conversely, assume that the polynomial function $\widehat{p}$ splits in $\cis$, and let $j \in \mathbb{N}$. 
We can assume that $\val p < j < \deg p$ by Lemma~\ref{lem:0n}. 
Since $\widehat{p}$ splits in $\cis$ we can write 
\[
\widehat{p}(x) = p_n (x \plus c_1) \cdots (x \plus c_n),  
\]
for all $x \in \cis$, with $c_1 \geqslant \ldots \geqslant c_n$, where $n = \deg p$. 
Now we take 
$
q_j = p_n c_1 \cdots c_{n-j}
$,
and we show that $q_j = \bigwedge_{x \in \cis^*} \widehat{p}(x) x^{-j}$, which will prove that $p(X)$ is closable. 
If $x \in \cis^*$ we have 
\[
c_1 \cdots c_{n-j} \leqslant (x \plus c_1) \cdots (x \plus c_{n-j}) (1 \plus c_{n-j+1} x^{-1}) \cdots (1 \plus c_n x^{-1}), 
\]
so that 
\[
q_j \leqslant p_n (x \plus c_1) \cdots (x \plus c_{n-j}) (x \plus c_{n-j+1}) \cdots (x \plus c_n) x^{-j}, 
\]
i.e.\ $q_j \leqslant \widehat{p}(x) x^{-j}$, for all $x \in \cis^*$. 
This shows that $q_j$ is a lower bound of $\{ \widehat{p}(x) x^{-j} : x \in \cis^* \}$. 
Moreover, we have $q_j^{} = \widehat{p}(c_{n-j}^{}) c_{n-j}^{-j}$ (note that $\val p < j < \deg p$ implies $c_{n-j} \neq 0$), so that $q_j = \bigwedge_{x \in \cis^*} \widehat{p}(x) x^{-j}$.  
\end{proof}

\begin{definition}
An idempotent semifield $\cis$ is \textit{algebraically closed} if, for every polynomial $p \in \cis[X]$ of degree $> 0$, the associated polynomial function $\widehat{p}$ splits in $\cis$. 
\end{definition}

As an immediate consequence of Theorem~\ref{thm:second} we get the following characterization of algebraically closed idempotent semifields. 

\begin{corollary}\label{coro:second}
An idempotent semifield is algebraically closed if and only if every polynomial is closable. 
\end{corollary}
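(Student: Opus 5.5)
The plan is to deduce this directly from Theorem~\ref{thm:second}, applied polynomial by polynomial, together with a separate check of the degenerate case.

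For the forward direction, assume $\cis$ is algebraically closed and let $p \in \cis[X]$. If $\deg p > 0$, then $\widehat{p}$ splits in $\cis$ by definition. The implication (2) $\Rightarrow$ (1) of Theorem~\ref{thm:second} then gives that $p$ is closable.

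Polynomials of degree $\leqslant 0$ are not covered by Theorem~\ref{thm:second}, so they must be handled directly. If $p = p_0$ is constant, then $\widehat{p}(x) x^{-j} = p_0 x^{-j}$. For $j = 0$ the set $\{ \widehat{p}(x) x^{-j} : x \in \cis^* \}$ is the singleton $\{ p_0 \}$, whose inf is $p_0$. For $j > 0$ the set is $p_0 \cdot \{ x^{-j} : x \in \cis^* \}$. I will show it has inf $0$ by combining Lemma~\ref{lem:0n}\eqref{lem:0n3}, which applies to $p \neq 0$ since $j > \deg p$, with the fact that $0$ is the inf of $\cis^*$ (Lemma~\ref{lem:inf0bis}; the standing assumption $\cis \neq \mathbb{B}$ holds). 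If $p = 0$, every set in the definition equals $\{0\}$, so $p$ is trivially closable. Hence every polynomial is closable.

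For the converse, assume every polynomial is closable and let $p$ have degree $n > 0$. Then $p$ is closable, and the implication (1) $\Rightarrow$ (2) of Theorem~\ref{thm:second} shows that $\widehat{p}$ splits in $\cis$. This is exactly algebraic closedness.

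There is no substantial obstacle. The only point needing care is the degree-$\leqslant 0$ case in the forward direction, and Lemma~\ref{lem:0n} already settles it.
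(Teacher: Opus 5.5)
Your proof is correct and follows the paper's approach: the paper presents the corollary as an immediate consequence of Theorem~\ref{thm:second}, applied to each polynomial of positive degree. Your explicit check that constant polynomials are closable makes precise a point the paper leaves implicit. Lemma~\ref{lem:0n} alone already settles that case, so the additional appeal to the inf of $\cis^*$ is redundant but harmless.
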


An idempotent semifield is \textit{complete} if every subset bounded above has a sup (or equivalently if every nonempty subset has an inf). 

\begin{corollary}
Complete idempotent semifields are algebraically closed. 
\end{corollary}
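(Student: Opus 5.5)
By Corollary~\ref{coro:second}, it is enough to show that every polynomial $p \in \cis[X]$ is closable when $\cis$ is complete. Closability means that for each $j \in \mathbb{N}$ the subset
\[
A(p,j) = \{ \widehat{p}(x) x^{-j} : x \in \cis^* \}
\]
has an inf. The whole proof therefore reduces to checking that this inf exists.

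I take the definition of completeness in its second form: every nonempty subset of $\cis$ has an inf. The set $A(p,j)$ is indexed by $\cis^*$, which is nonempty because $1 \in \cis^*$, so $A(p,j)$ is nonempty. Its inf $\bigwedge A(p,j)$ therefore exists for every $j$. Hence $p$ is closable, and $\overline{p}$ is well defined.

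Closability is all we need from this; the finiteness of the support of $\overline{p}$ is automatic from Lemma~\ref{lem:0n}. Applying Corollary~\ref{coro:second}, or directly Theorem~\ref{thm:second} to each $p$ of degree $>0$, shows that $\widehat{p}$ splits, so $\cis$ is algebraically closed.

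The only point needing care is which formulation of completeness to use. If one starts from ``every subset bounded above has a sup'', one must first produce infs of nonempty sets. For a nonempty set $A$, the set of its lower bounds is bounded above by any element of $A$, so it has a sup $s$. A standard check shows $s$ is the inf of $A$: each $a \in A$ is an upper bound of the lower bounds, so $s \leqslant a$. Alternatively, one can note that $A(p,j)$ is bounded below by $0$ and obtain its inf as the sup of its lower bounds, which is bounded above by $\widehat{p}(1)$. There is no real obstacle beyond this.
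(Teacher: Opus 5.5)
Your proposal is correct and follows the same route as the paper, which treats this as an immediate consequence of Corollary~\ref{coro:second}: completeness gives an inf for each nonempty set $\{\widehat{p}(x)x^{-j} : x \in \cis^*\}$, so every polynomial is closable. Your extra paragraph, which derives infs of nonempty sets from the ``bounded above has a sup'' formulation, just verifies the equivalence that the paper asserts parenthetically in its definition of completeness.
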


\begin{corollary}\label{coro:zmax}
The following idempotent semifields: 
\begin{itemize}
	\item $\mathbb{Z}_{\max} = (\mathbb{Z} \cup \{-\infty\}, \max, -\infty, +, 0)$, 
	\item $\mathbb{R}_{\max} = (\mathbb{R} \cup \{-\infty\}, \max, -\infty, +, 0)$, 
	\item $\mathbb{R}_{+}^{\max} = (\mathbb{R}_+, \max, 0, \times, 1)$,  
\end{itemize}
are complete, hence algebraically closed. 
\end{corollary}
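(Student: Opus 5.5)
The corollary follows from the preceding corollary, which says that complete idempotent semifields are algebraically closed. So for each of the three listed semifields $\cis$, I only need to show that $\cis$ is complete: every nonempty subset has an inf, or equivalently every subset bounded above has a sup. By Corollary~\ref{coro:second}, this also shows directly that every polynomial is closable.

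First I check that each structure is an idempotent semifield.
\begin{itemize}
\item $\mathbb{Z}_{\max}$ and $\mathbb{R}_{\max}$: these are $(G\cup\{-\infty\},\max,-\infty,+,0)$ with $G=\mathbb{Z}$ or $G=\mathbb{R}$ a totally ordered abelian group. Every finite element is invertible (its inverse is its negative), $\max$ is idempotent, $-\infty$ is absorbing for $+$, and $+$ distributes over $\max$.
\item $\mathbb{R}_+^{\max}$: the same holds, with multiplication by positive reals distributing over $\max$ because it is order-preserving.
\end{itemize}
In each case the natural order $x \leqslant y \Leftrightarrow \max(x,y)=y$ is the usual real order. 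All three are also commutative and infinite, as the disclaimer requires.

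Completeness then reduces to the order properties of $\mathbb{R}$ and $\mathbb{Z}$. Let $A$ be a nonempty subset.
\begin{itemize}
\item In $\mathbb{R}_{\max}$: if $A$ has a real lower bound, the greatest-lower-bound property of $\mathbb{R}$ gives $\bigwedge A \in \mathbb{R}$. Otherwise the only lower bound is $-\infty$, so $\bigwedge A = -\infty$. This covers the case $-\infty\in A$.
\item In $\mathbb{Z}_{\max}$: any set of integers bounded below has a least element (well-ordering applied after a shift). An unbounded set again has inf $-\infty$.
\item In $\mathbb{R}_+^{\max}$: every subset is bounded below by $0$, so the real infimum exists and lies in $\mathbb{R}_+$.
\end{itemize}

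No step is a real obstacle; the argument is routine verification. The only point needing care is to make sure the inf is taken in the semifield's own order. For $\max$-based structures this order is the usual one, not a reversed one, and the adjoined bottom element $-\infty$, respectively $0$, handles the unbounded-below and empty-lower-bound cases.
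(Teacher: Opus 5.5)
Your proof is correct and follows the paper's route: the paper gives no separate argument, only combining the preceding corollary (complete implies algebraically closed) with the routine fact that these three semifields are conditionally complete, which you verify correctly in their natural orders. One small wording slip: closability of every polynomial follows from completeness directly by definition, since each set $\{\widehat{p}(x)x^{-j} : x \in \cis^*\}$ is nonempty, and it is Corollary~\ref{coro:second} that then turns closability into algebraic closedness, not Corollary~\ref{coro:second} that yields closability.
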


\begin{remark}
Cuninghame-Green and Meijer were the first to prove that $\mathbb{R}_{\max}$ is algebraically closed (see \cite[Theorem~~8]{Cuninghame80}). 
See also Baccelli et al.\ \cite[Theorem~3.43]{Baccelli92}, Akian, Bapat, and Gaubert \cite{Akian04}, \cite{Akian06}. 
\end{remark}

\begin{example}
We consider the algebraically closed idempotent semifield $(K_{\alpha}, \vee, 0, \times, 1)$, where 
\[
K_{\alpha} := \{ \alpha^{k} : k \in \mathbb{Z} \} \cup \{ 0 \}, 
\]
$\alpha$ is a real number $> 1$, and $\vee$ denotes the maximum operator, i.e.\ $x \vee y := \max(x, y)$. 
Note that $K_{\alpha}$ is isomorphic to $\mathbb{Z}_{\max}$. 
Take $p(X) := X^2 \vee \alpha X \vee \alpha$ and $q(X) := X^2 \vee \alpha$. 
Then $p(X)$ is closed by Example~\ref{ex:deg2}, while $q(X)$ is not, and 
$
p(X) = (X \vee 1) (X \vee \alpha)
$. 
Moreover,  
\[
\widehat{p}(x) = (x \vee 1) (x \vee \alpha) = (x \vee \alpha^{1/2})^2 = \widehat{q}(x), 
\]
for all $x \in K_{\alpha}$, where $\alpha^{1/2}$ belongs to the radicable closure $\widetilde{K}_{\alpha}$ of $K_{\alpha}$, see Theorem~\ref{thm:radclo} in the Appendix. 
Thus, $\widehat{q}$ splits in $K_{\alpha}$, hence $q(X)$ is closable by Theorem~\ref{thm:second}, and $\overline{q}(X) = p(X)$. 
Note that the closed polynomial associated with $q(X)$ in $\widetilde{K}_{\alpha}$ is not $p(X)$, but rather 
\[
(X \vee \alpha^{1/2})^2 = X^2 \vee \alpha^{1/2} X \vee \alpha. 
\]
\end{example}


%





\section{Preradicability and polynomial inequalities}\label{sec:ineq}

This section focuses on the concept of a \textit{preradicable} idempotent semifield, a property that is shown to be useful for solving polynomial inequalities. 
We also prove that every algebraically closed idempotent semifield is preradicable, and that the converse statement holds true if the idempotent semifield is totally ordered. 

\subsection{Definition and first properties}

An idempotent semifield $\cis$ is \textit{preradicable} if, for all $y \in \cis$, $n \in \mathbb{N}^*$, there exists a least element $x \in \cis$ such that $x^n \geqslant y$, equivalently if the maps $\theta_k : \cis \rightarrow \cis, x \mapsto x^k$ are residuated, for all $k \in \mathbb{N}^*$. 

\begin{lemma}\label{lem:pres}
Assume that $\cis$ is preradicable, 
and let $p \in \cis[X]$. 
Then the following conditions are equivalent: 
\begin{enumerate}
	\item\label{pres1} $\widehat{p}$ is residuated; 
	\item\label{pres2} $\widehat{p}$ is injective; 
	\item\label{pres4} $\widehat{p}$ is an order-embedding; 
	\item\label{pres3} $p_0 = 0$ and $\deg p > 0$. 
\end{enumerate}
In this case, $\widehat{p}^{\#}(y) = 0$ if and only if $y = 0$. 
\end{lemma}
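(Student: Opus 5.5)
We prove the cycle \eqref{pres3} $\Rightarrow$ \eqref{pres4} $\Rightarrow$ \eqref{pres2} $\Rightarrow$ \eqref{pres3}, and then \eqref{pres3} $\Rightarrow$ \eqref{pres1} $\Rightarrow$ \eqref{pres3}.

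The first implication \eqref{pres3} $\Rightarrow$ \eqref{pres4} is exactly Golan's Proposition~\ref{prop:golan}, and \eqref{pres4} $\Rightarrow$ \eqref{pres2} is immediate.

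For \eqref{pres2} $\Rightarrow$ \eqref{pres3}, argue by contraposition. If $\deg p \leqslant 0$, then $\widehat{p}$ is constant, hence not injective, since $\cis$ is infinite. If $p_0 \neq 0$, then $\widehat{p}(x) = p_0$ for every $x \leqslant y_0$, where $y_0 := 1 \wedge \bigwedge_{j \in \supp p,\, j>0} (p_j^{-1} p_0)$. Indeed, for $j \geqslant 1$ and $x \leqslant y_0 \leqslant 1$ we have $p_j x^j \leqslant p_j x \leqslant p_0$. Since $y_0 \neq 0$, both $0$ and $y_0$ are mapped to $p_0$, so $\widehat{p}$ is not injective.

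For \eqref{pres1} $\Rightarrow$ \eqref{pres3}, a residuated map preserves all existing sups, in particular the empty sup. Hence $\widehat{p}(0) = 0$, that is $p_0 = 0$. If moreover $\deg p \leqslant 0$, then $p = 0$ and $\widehat{p} \equiv 0$. This map is residuated, with $\widehat{p}^{\#}(y)$ required to be a greatest element of $\cis$, which does not exist since $\cis^*$ is a group different from $\{1\}$. So $\deg p > 0$.

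The main step is \eqref{pres3} $\Rightarrow$ \eqref{pres1}, which is where preradicability is used. Write $p = \sum_{j \in \supp p} p_j X^j$ with all $j \geqslant 1$. Each monomial map $x \mapsto p_j x^j$ is residuated, being the composite of the residuated maps $\theta_j$ and multiplication by $p_j \neq 0$; its residual is $y \mapsto \theta_j^{\#}(p_j^{-1} y)$. The key fact is that a finite nonempty sum, i.e.\ a finite sup, of residuated maps $f_j$ is residuated with residual $\bigwedge_j f_j^{\#}$. This holds because
\[
\bigvee_j f_j(x) \leqslant y \iff \forall j,\ x \leqslant f_j^{\#}(y) \iff x \leqslant \bigwedge_j f_j^{\#}(y),
\]
and finite infs exist in the lattice $\cis$. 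Hence
\[
\widehat{p}^{\#}(y) = \bigwedge_{j \in \supp p} \theta_j^{\#}(p_j^{-1} y).
\]
The only care needed is to confirm that $\supp p$ is nonempty, which follows from $\deg p > 0$, and that finite infs are available.

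For the final assertion, $\widehat{p}^{\#}(0) = 0$ because $\widehat{p}^{\#}(0)$ is the greatest $x$ with $\widehat{p}(x) \leqslant 0$, and $\widehat{p}$ is injective with $\widehat{p}(0) = 0$. Conversely, if $\widehat{p}^{\#}(y) = 0$, then the Galois inequality gives $y \leqslant \widehat{p}(\widehat{p}^{\#}(y))$ only in the wrong direction, so we argue instead as follows. Suppose $y \neq 0$. Pick $x$ as in the proof of Lemma~\ref{lem:0n}\eqref{lem:0n1}, namely a nonzero $x \leqslant \bigwedge_j (p_j^{-1} y) \wedge 1$. Then $\widehat{p}(x) \leqslant \sum_j p_j x \leqslant y$, so $\widehat{p}^{\#}(y) \geqslant x > 0$. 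Hence $\widehat{p}^{\#}(y) = 0$ forces $y = 0$.
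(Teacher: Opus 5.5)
Your proposal is correct and follows essentially the same route as the paper. It uses the same implication cycle, the same non-injectivity witness $1 \wedge \bigwedge_{j} p_j^{-1}p_0$, and the same residual formula $\widehat{p}^{\#}(y) = \bigwedge_{j \in \supp p} \theta_j^{\#}(p_j^{-1} y)$, which you justify more carefully than the paper via composition and finite sups of residuated maps. It also uses the same lower bound $\bigwedge_j (p_j^{-1}y) \wedge 1$ for the final claim.

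The one garbled sentence is the one about the Galois inequality going ``in the wrong direction''; the residuation inequality is $\widehat{p}(\widehat{p}^{\#}(y)) \leqslant y$. It plays no role in the argument you actually give.
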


\begin{proof}
For every $k \in \mathbb{N}^*$, we denote by $\theta_k$ the map $\cis \rightarrow \cis, x \mapsto x^k$. 


\eqref{pres4} $\Rightarrow$ \eqref{pres2} is evident. 

\eqref{pres2} $\Rightarrow$ \eqref{pres3}. 
If $\deg p \leqslant 0$, then $\widehat{p}$ is clearly not injective. 
Now assume that $p_0 \neq 0$. 
Take $x_0^{} := \bigwedge_{k \in \supp p} p_k^{-1} p_0^{}$, and let us show that $\widehat{p}(x_0) = \widehat{p}(0)$. 
Since $x_0^{} \leqslant p_0^{-1} p_0^{} = 1$, we have $x_0^k \leqslant x_0^{}$ for all $k \geqslant 1$, so $x_0^k \leqslant p_k^{-1} p_0^{}$ for all $k \in \supp p$. 
This implies $p_k^{} x_0^k \leqslant p_0^{}$, for all $k \in \supp p$, hence $\widehat{p}(x_0) \leqslant p_0$. 
It is obvious that $\widehat{p}(x_0) \geqslant p_0$, so that $\widehat{p}(x_0) = p_0 = \widehat{p}(0)$. 
Since $x_0 \neq 0$, this proves that $\widehat{p}$ is not injective. 

\eqref{pres3} $\Rightarrow$ \eqref{pres1}. 
Assume that $p_0 = 0$ and $\deg p > 0$. 
Since $\cis$ is preradicable, $\theta_k$ is residuated. 
If we define $\widehat{p}^{\#}$ by 
\begin{equation}\label{eq:psharp}
\widehat{p}^{\#}(y) = \bigwedge_{k \in \supp p} \theta_k^{\#}(p_k^{-1} y), 
\end{equation}
then $\widehat{p}(x) \leqslant y \Leftrightarrow x \leqslant \widehat{p}^{\#}(y)$ for all $x, y \in \cis$. 

\eqref{pres1} $\Rightarrow$ \eqref{pres3}. We have $0 \leqslant \widehat{p}^{\#}(0)$; this entails $p_0 = \widehat{p}(0) \leqslant 0$, i.e.\ $p_0 = 0$. 
Moreover, a constant polynomial function cannot be residuated with $\cis \neq \mathbb{B}$, hence $\deg p > 0$. 

\eqref{pres3} $\Rightarrow$ \eqref{pres4} is Proposition~\ref{prop:golan}. 

Now assume that $\widehat{p}$ is residuated and $y \neq 0$. 
Since $y \wedge 1 \leqslant 1$, we have $(y \wedge 1)^k \leqslant y \wedge 1 \leqslant y$, hence $y \wedge 1 \leqslant \theta_k^{\#}(y)$, for all $k \geqslant 1$. 
With Equation~\eqref{eq:psharp}, we obtain $\widehat{p}^{\#}(y) \geqslant \bigwedge_{k \in \supp p} (p_k^{-1} y \wedge 1) > 0$ (note that $k = 0 \notin \supp p$ since $p_0 = 0$). 
Conversely, if $y = 0$, then $\widehat{p}(\widehat{p}^{\#}(y)) \leqslant y = 0$, hence $\widehat{p}(\widehat{p}^{\#}(y)) = 0$, so $\widehat{p}^{\#}(y) = 0$. 
\end{proof}

\subsection{Preradicability and solutions to polynomial inequalities}

We now study polynomial inequalities of the form $\widehat{p}(x) \leqslant \widehat{q}(x)$. 

\begin{proposition}\label{prop:ineqc0}
Assume that $\cis$ is preradicable, 
and let $p, q \in \cis[X]$. 
If one of the following conditions holds:
\begin{enumerate}
  \item\label{prop:ineqc01} $\deg p < \deg q$, or
  \item\label{prop:ineqc02} $\val p > \val q$,
\end{enumerate}
then the inequality $\widehat{p}(x) \leqslant \widehat{q}(x)$ has a non-zero solution in $\cis$. 
\end{proposition}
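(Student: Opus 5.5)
Case \eqref{prop:ineqc01}: let $n = \deg q$ and $d = \deg p < n$. If $p = 0$, any non-zero $x$ works. Otherwise, the plan is to dominate $\widehat{p}(x)$ by the single leading monomial $q_n x^n \leqslant \widehat{q}(x)$ when $x$ is large.

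It suffices to find $x \in \cis^*$ with $p_k x^k \leqslant q_n x^n$ for every $k \in \supp p$, i.e.\ $q_n^{-1} p_k \leqslant x^{n-k}$. Since $n - k \geqslant 1$, I choose $x \geqslant 1$; then $x^{n-k} \geqslant x$, so it is enough to take
\[
x := 1 \plus q_n^{-1} \Plus_{k \in \supp p} p_k .
\]
This is the same device as in the proof of Lemma~\ref{lem:0n}\eqref{lem:0n3}. It is non-zero, and it gives $\widehat{p}(x) = \Plus_k p_k x^k \leqslant q_n x^n \leqslant \widehat{q}(x)$. Preradicability is not even needed here. I could instead use the residual $\theta_{n-k}^{\#}$ to obtain the least such $x$, as a sup of finitely many elements, but the explicit choice is simpler.

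Case \eqref{prop:ineqc02}: let $v = \val q < w := \val p$. If $p = 0$, this is again trivial. Now I use small $x$: I want $x \in \cis^*$, $x \leqslant 1$, with $p_k x^k \leqslant q_v x^v$ for all $k \in \supp p$, i.e.\ $x^{k-v} \leqslant q_v^{-1} p_k^{-1}$ where $k - v \geqslant 1$. With $x \leqslant 1$ we have $x^{k-v} \leqslant x$, so I take
\[
x := 1 \wedge \bigwedge_{k \in \supp p} q_v^{} p_k^{-1},
\]
a finite inf of non-zero elements. This is non-zero because $\cis^*$ is a lattice-ordered group, in which finite infs of invertibles are invertible by \eqref{eq:cisinf}. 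Then $\widehat{p}(x) \leqslant q_v x^v \leqslant \widehat{q}(x)$. Equivalently, the case follows from the first one by the substitution $x \mapsto x^{-1}$ after multiplying by a suitable power.

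The only mild obstacle is checking that the chosen $x$ is non-zero and that the monotonicity of powers ($x \geqslant 1 \Rightarrow x^m \geqslant x$, $x \leqslant 1 \Rightarrow x^m \leqslant x$ for $m \geqslant 1$) is applied with the right exponents. Both are routine consequences of the order compatibility of multiplication.
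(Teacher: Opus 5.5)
Your proof is correct, but it takes a different and more elementary route than the paper's. The paper uses preradicability in both cases:
\begin{itemize}
\item In case (1) it takes $x_0 := \Plus_{j<n}\theta_{n-j}^{\flat}(q_n^{-1}p_j)$, built from the lower adjoints of the power maps.
\item In case (2) it takes $x_0 := \widehat{p^{(v)}}^{\#}(q_v)$. It relies on Lemma~\ref{lem:pres} to know that $\widehat{p^{(v)}}$ is residuated and that this residual is non-zero.
\end{itemize}

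You instead give explicit witnesses, $1 \plus q_n^{-1}\Plus_{k} p_k$ and $1 \wedge \bigwedge_{k} q_v p_k^{-1}$. These need only monotonicity of powers on either side of $1$, and the fact that finite infs of elements of $\cis^*$ stay in $\cis^*$. As a result, you show that the preradicability hypothesis is superfluous for this existence statement: it holds in any idempotent semifield. You also treat $p = 0$ explicitly. The paper's argument tacitly excludes that case, since its bound $x_0 \geqslant \theta^{\flat}_{n-m}(q_n^{-1}p_m)$ and its appeal to Lemma~\ref{lem:pres} both require $p \neq 0$.

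What the paper's construction buys is extremality of the witness:
\begin{itemize}
\item In case (1), its $x_0$ is the least $x$ with $x^{n-j} \geqslant q_n^{-1}p_j$ for all $j<n$.
\item In case (2), its $x_0$ is the greatest $x$ with $\widehat{p}(x) \leqslant q_v x^v$.
\end{itemize}
This is the same residuation device that later yields the least non-zero solution in Theorem~\ref{thm:bog}, so it fits the section's machinery. The proposition as stated only asks for existence, though, and your argument settles it with weaker hypotheses.

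One small slip: in case (2), the intermediate condition should read $x^{k-v} \leqslant q_v p_k^{-1}$, not $q_v^{-1}p_k^{-1}$. Your actual choice of $x$ uses the correct bound, so the argument is unaffected.
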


\begin{proof}
Case \eqref{prop:ineqc01}. 
Since $\cis$ is preradicable, for every $k \in \mathbb{N}^*$ there is a map $\theta_k^{\flat}$ such that 
\[
x \leqslant y^k \Leftrightarrow \theta_k^{\flat}(x) \leqslant y, 
\]
for all $x, y \in \cis$. 
Note that $\theta_k^{\flat}(x) = 0 \Rightarrow x \leqslant 0^k \Rightarrow x = 0$. 
Take $n := \deg q$, $m := \deg p$, and $x_0^{} := \Plus_{j < n} \theta_{n - j}^{\flat}(q_n^{-1} p_j^{})$. 
Since $m < n$, we have $x_0 \geqslant \theta_{n - m}^{\flat}(q_n^{-1} p_m^{}) > 0$, so $x_0$ is non-zero. 
Moreover, $x_0^{n-j} \geqslant q_n^{-1} p_j^{}$, for all $j < n$. 
This yields $\widehat{q}(x_0^{}) \geqslant q_n^{} x_0^{n} \geqslant p_j^{} x_0^{j}$, for all $j < n$. 
Since $\deg p < n$, this implies $\widehat{q}(x_0) \geqslant \widehat{p}(x_0)$, which proves that $x_0$ is a non-zero solution. 

Case \eqref{prop:ineqc02}. 
Take $v := \val q$. 
Since $\val p > v$, the derivative $p^{(v)}$ of $p$ induces a residuated function by Lemma~\ref{lem:pres}. 
Take 
\[
x_0 := \widehat{p^{(v)}}^{\#}(q_v). 
\]
Then $x_0 \neq 0$ by the second part of Lemma~\ref{lem:pres}. 
Moreover, 
\[
\widehat{p}(x_0^{}) = x_0^{v} \widehat{p^{(v)}}(x_0^{}) \leqslant q_v^{} x_0^v \leqslant \widehat{q}(x_0^{}), 
\]
which proves that $x_0$ is a non-zero solution.  
\end{proof}

\begin{proposition}\label{prop:ineqc}
Assume that $\cis$ is preradicable, 
and let $p \in \cis[X]$ and $c \in \cis$. 
If $\deg p > 0$, then the inequality $\widehat{p}(x) \leqslant c$ has a (greatest) solution in $\cis$ if and only if $p_0 \leqslant c$. 
\end{proposition}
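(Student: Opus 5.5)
Necessity is immediate: since $\widehat{p}(x) \geqslant p_0$ for every $x \in \cis$, any solution $x$ of $\widehat{p}(x) \leqslant c$ forces $p_0 \leqslant c$. (In particular, $x = 0$ gives $\widehat{p}(0) = p_0$.) So the content lies in sufficiency, together with the existence of a \emph{greatest} solution.

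Assume $p_0 \leqslant c$. Write $p(X) = p_0 \plus X p'(X)$ and set $r(X) := X p'(X)$, so that $r_0 = 0$ and $\deg r = \deg p > 0$. By idempotency, $\widehat{p}(x) = p_0 \plus \widehat{r}(x)$, and therefore $\widehat{p}(x) \leqslant c$ holds if and only if both $p_0 \leqslant c$ and $\widehat{r}(x) \leqslant c$ hold. Under our hypothesis the first condition is automatic. The solution set is thus exactly $\{ x \in \cis : \widehat{r}(x) \leqslant c \}$.

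Since $\cis$ is preradicable and $r$ satisfies condition \eqref{pres3} of Lemma~\ref{lem:pres}, the function $\widehat{r}$ is residuated. Explicitly, by \eqref{eq:psharp},
\[
\widehat{r}^{\#}(c) = \bigwedge_{k \in \supp p,\, k \geqslant 1} \theta_k^{\#}(p_k^{-1} c),
\]
where the inf is finite and nonempty because $\deg p > 0$. Residuation gives $\widehat{r}(x) \leqslant c \Leftrightarrow x \leqslant \widehat{r}^{\#}(c)$. Hence $x^* := \widehat{r}^{\#}(c)$ is itself a solution, and every other solution lies below it, so $x^*$ is the greatest solution.

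The only point needing care is the degenerate case $c = 0$. In that case $p_0 = 0$, and $\widehat{r}^{\#}(0) = 0$ by the final assertion of Lemma~\ref{lem:pres}, so the greatest solution is $0$. This is consistent with the general formula, so no separate argument is needed. I expect no real obstacle here; the whole argument reduces to splitting off the constant term and applying Lemma~\ref{lem:pres}.
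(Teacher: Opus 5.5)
Your proof is correct and follows essentially the same route as the paper: split off the constant term as $p = p_0 \plus X p'(X)$, use Lemma~\ref{lem:pres} to get that $\widehat{r}$ is residuated, and take $\widehat{r}^{\#}(c)$ as the greatest solution. Your extra remarks (the explicit formula via \eqref{eq:psharp} and the case $c = 0$) are accurate but not needed.
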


\begin{proof}
If $\widehat{p}(x) \leqslant c$ for some $x \in \cis$, then $p_0 \leqslant c$. 
Conversely, assume that $p_0 \leqslant c$. We write $p(X) = q(X) \plus p_0$, where $q(X)$ is the polynomial $X p'(X)$. By Lemma~\ref{lem:pres}, the map $\widehat{q}$ is residuated. 
Take $x_0 := \widehat{q}^{\#}(c)$. Then we have $\widehat{p}(x_0) = \widehat{q}(x_0) \plus p_0 \leqslant c \plus p_0 = c$, so we have proved the existence of a solution in $\cis$ to the inequality $\widehat{p}(x) \leqslant c$. 
If $x_1$ is another solution, then $\widehat{q}(x_1) \leqslant \widehat{p}(x_1) \leqslant c$, so $x_1 \leqslant \widehat{q}^{\#}(c) = x_0$, which proves that $x_0$ is the greatest solution. 
\end{proof}



\subsection{Links between preradicability and algebraic closedness}\label{subsec:prerad}

If $\cis$ is a complete idempotent semifield, it is preradicable as one can deduce from Lemma~\ref{prop:sn}. 
The following result strengthens this latter assertion. 

\begin{proposition}\label{prop:prerad}
Every algebraically closed idempotent semifield is preradicable. 
\end{proposition}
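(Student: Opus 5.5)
We must show: for every $y\in\cis$ and $n\in\mathbb{N}^*$ there is a least $x$ with $x^n \geqslant y$. The plan is to apply algebraic closedness to the polynomial $p(X) = X^n \plus y$. The case $y = 0$ is trivial, since $x = 0$ is least, so assume $y \neq 0$. By algebraic closedness, $\widehat{p}$ splits. By the second fundamental theorem (Theorem~\ref{thm:second}) we may then write
\[
\widehat{p}(x) = (x \plus c_1) \cdots (x \plus c_n), \qquad x \in \cis,
\]
where $c_1 \geqslant \ldots \geqslant c_n$ are the corners of $\overline{p}$. The candidate for the least $x$ with $x^n \geqslant y$ is $x_0 := c_1$, the largest corner.

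Next we check that $c_1^n \geqslant y$. Evaluating at $x = 0$ gives $y = \widehat{p}(0) = c_1 \cdots c_n \leqslant c_1^n$, since every $c_k \leqslant c_1$.

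Then we check minimality. Let $x$ satisfy $x^n \geqslant y$. Then $\widehat{p}(x) = x^n \plus y = x^n$. On the other hand, expanding the product, $\widehat{p}(x) \geqslant x^{n-1} c_1$. Hence $x^n \geqslant x^{n-1} c_1$. Since $x^n \geqslant y \neq 0$, we have $x \neq 0$, so we may cancel $x^{n-1}$ and obtain $x \geqslant c_1$. This step is where care is needed: the lower bound $x^{n-1}c_1$ comes from the term $x^{n-1}c_1$ appearing in the expansion of the product of linear factors (binomial-type distribution), and cancellation is legitimate because $\cis^*$ is a group. Since $c_1$ is itself a solution, it is the least element with $x^n \geqslant y$, and this proves preradicability.

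An alternative route for minimality is to use closability: by Lemma~\ref{lem:partial}, $\overline{p}_{n-1} = p_n c_1 = c_1$, so $c_1 = \bigwedge_{x\neq 0}(x^n\plus y)x^{-(n-1)}$. Each $x$ with $x^n\geqslant y$ contributes the value $x$ to this inf, which again gives $c_1 \leqslant x$. I expect no serious obstacle; the only subtle point is justifying the lower bound $\widehat{p}(x) \geqslant x^{n-1}c_1$ from the factorization, and that follows directly by distributing.
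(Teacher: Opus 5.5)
Your proof is correct and follows essentially the same route as the paper: factor $\widehat{p}$ for $p(X) = X^n + y$, then show that the largest corner $c_1$ is the least $x$ with $x^n \geqslant y$. The differences are cosmetic. You verify $c_1^n \geqslant y$ by evaluating at $0$ rather than at $c_1$. You get minimality from the bound $x^{n-1}c_1 \leqslant \widehat{p}(x) = x^n$, whereas the paper divides by $s^n$ and notes that each factor $1 + c_k s^{-1}$ must equal $1$.
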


\begin{proof}
Let $\cis$ be an algebraically closed idempotent semifield, and let $t \in \cis$ and $n \in \mathbb{N}^*$. 
We can suppose that $t\neq 0$ without loss of generality. 
We consider the polynomial $p(X) = X^n \plus t$. 
Since $\cis$ is algebraically closed we can write 
\[
\widehat{p}(x) = (x \plus c_1) \cdots (x \plus c_n), 
\]
for all $x \in \cis$, with $c_1 \geqslant \ldots \geqslant c_n$. 
Moreover, 
\[
c_1^n \plus t = \widehat{p}(c_1)  = (c_1 \plus c_1) \cdots (c_1 \plus c_n) = c_1^n, 
\]
so $t \leqslant c_1^n$. 
To complete the proof, let us show that $c_1$ is the \textit{least} $s$ such that $t \leqslant s^n$. 
Let $s$ be some element of $\cis$ satisfying $t \leqslant s^n$, and let us show that $c_1 \leqslant s$. 
We have $s^n = s^n \plus t = \widehat{p}(s) = (s \plus c_1) \cdots (s \plus c_n)$. This implies that $1 = (1 \plus c_1 s^{-1}) \cdots (1 \plus c_n s^{-1})$, so each term of the product $(1 \plus c_1 s^{-1}) \cdots (1 \plus c_n s^{-1})$ equals $1$. In particular, $1 \plus c_1 s^{-1} = 1$, hence $c_1 s^{-1} \leqslant 1$, i.e.\ $c_1 \leqslant s$.  
%
\end{proof}

\begin{example}[Example~\ref{ex:deg2} continued]\label{ex:deg2bis}
If $\cis$ is preradicable, every polynomial of degree $2$ in $\cis[X]$ is closable. 
Indeed, let $p(X) = a X^2 \plus b X \plus c$ be a polynomial of $\cis[X]$ of degree $2$ ($a \neq 0$). 
Take $u := \theta_2^{\flat}(a c) \plus b$. 
Then $u^2 \geqslant a c$, so the polynomial $q(X) := a X^2 \plus u X \plus c$ is closed. 
Let $x \in \cis^*$. 
We have $(a x + b + c x^{-1})^2 \geqslant (a x + c x^{-1})^2 \geqslant a c$. 
Thus, $\theta_2^{\flat}(a c) \leqslant a x + b + c x^{-1}$. 
We also have $b \leqslant a x + b + c x^{-1}$. 
Hence, $u \leqslant a x + b + c x^{-1} = \widehat{p}(x) x^{-1}$, for all $x \in \cis^*$.  
Moreover, $u = \widehat{p}(x) x^{-1}$ with $x := a^{-1} u$. 
We get $u = \overline{p}_1$, so that $q(X) = \overline{p}(X)$. 
This proves that $p(X)$ is closable. 
\end{example}


We now prove a converse statement to the previous result in the case where the idempotent semifield $\cis$ is totally ordered. 

\begin{theorem}\label{thm:eqalg}
Assume that $\cis$ is totally ordered. 
Then the following conditions are equivalent: 
\begin{itemize}
	\item $\cis$ is algebraically closed; 
	\item $\cis$ is preradicable. 
\end{itemize}
\end{theorem}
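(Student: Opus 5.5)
The implication ``algebraically closed $\Rightarrow$ preradicable'' is Proposition~\ref{prop:prerad}, which needs no total order. For the converse, assume $\cis$ is totally ordered and preradicable. By Corollary~\ref{coro:second} it suffices to show that every $p \in \cis[X]$ of degree $n>0$ is closable. By Lemma~\ref{lem:0n}, this means showing that $A(p,j)=\{\widehat{p}(x)x^{-j} : x\in\cis^*\}$ has an inf for every $j$ with $\val p<j<\deg p$.

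The plan is to write down the candidate inf as a ``concave hull'' coefficient, using the residual maps $\theta_k^\flat$ (the least $s$ with $s^k\geqslant y$). For $j_1<j<j_2$ in $\supp p$, set
\[
q_j := \Plus_{j_1<j<j_2} \theta^\flat_{j_2-j_1}\bigl(p_{j_1}^{j_2-j}\,p_{j_2}^{j-j_1}\bigr) \plus p_j .
\]
This sup is finite, so it exists in the lattice $\cis$.

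\emph{Lower bound.} For $x\in\cis^*$ we have $p_{j_i}x^{j_i}\leqslant\widehat{p}(x)$. Hence
\[
p_{j_1}^{j_2-j}p_{j_2}^{j-j_1}\leqslant\bigl(\widehat{p}(x)x^{-j}\bigr)^{j_2-j_1},
\]
and by the defining property of $\theta^\flat$ each term of $q_j$ is $\leqslant\widehat{p}(x)x^{-j}$. This is the same computation as in Example~\ref{ex:deg2bis}, which is precisely the case $n=2$ of the argument.

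\emph{Greatest lower bound.} Let $\ell$ be a lower bound of $A(p,j)$; we must show $\ell\leqslant q_j$. Total order lets us replace sums by maxima: $\widehat{p}(x)x^{-j}=\max_k p_kx^{k-j}$. Choose the pair $(j_1,j_2)$ realising the maximum in $q_j$; if $q_j=p_j$, then $j$ is a genuine vertex and we take $j_1=j$. Then test $\ell$ against $x_0 := \theta^\flat_{j_2-j_1}(p_{j_1}p_{j_2}^{-1})$, the least $x$ with $p_{j_2}x^{j_2-j_1}\geqslant p_{j_1}$. The expected picture is that $(j_1,j_2)$ is an edge of the Newton polygon. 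Then at $x_0$ the maximum $\max_k p_kx_0^{k}$ is attained at $k=j_2$ (or at $j_1$), and all other $k$ are dominated by the extremality of the chosen pair. This reduces $\ell\leqslant\widehat{p}(x_0)x_0^{-j}$ to comparing $\ell$ with $p_{j_2}x_0^{j_2-j}$.

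\emph{Main obstacle.} In a non-radicable field one may have $x_0^{j_2-j_1}>p_{j_1}p_{j_2}^{-1}$ strictly, so $x_0$ does not ``balance'' the two monomials and the inf need not be attained. The delicate step is to show nonetheless that $p_{j_2}x_0^{j_2-j}$ (or a comparison with the neighbouring point obtained from $x_0$ by minimality in a total order) is bounded by $q_j$. I will first try to exploit the minimality of $x_0$ directly. In a totally ordered group, any $y<x_0$ satisfies $p_{j_2}y^{j_2-j_1}<p_{j_1}$, so the maximum at $y$ jumps to $j_1$. Comparing the two sides at $x_0$ and at such $y$ should bracket $\ell$ by $\theta^\flat$-values that coincide with the corresponding term of $q_j$; Lemma~\ref{prop:perfore} handles the passage from powers back to elements.

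If this direct route fails, the fallback is induction on $\deg p$: use the first corner $c_1:=\Plus_{j<n}\theta^\flat_{n-j}(p_j p_n^{-1})$, peel off a factor $(x\plus c_1)$ from $\widehat{p}$, and invoke Theorem~\ref{thm:second} on the remaining factor.
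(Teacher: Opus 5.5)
\textbf{The candidate $q_j$ is not the infimum.} The direction ``algebraically closed $\Rightarrow$ preradicable'' and your lower-bound computation are fine. The problem is that in a non-radicable totally ordered semifield $q_j$ is in general a \emph{strict} lower bound of $A(p,j)$. So the ``greatest lower bound'' step is not just delicate; it is false.

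Take $K_\alpha\cong\mathbb{Z}_{\max}$, which is totally ordered and complete, hence preradicable. Let $p(X)=X^4\plus\alpha^2$ and $j=2$. The only admissible pair is $(j_1,j_2)=(0,4)$, so $q_2=\theta^\flat_4(\alpha^4)=\alpha$. But for $x=\alpha^k$ one has $\widehat{p}(x)x^{-2}=\alpha^{\max(2k,\,2-2k)}\geqslant\alpha^2$, with equality at $x=1$ and $x=\alpha$. Thus $\overline{p}_2=\alpha^2>q_2$. In fact $\widehat{p}(x)=(x\plus\alpha)^2(x\plus 1)^2$ and $\overline{p}(X)=X^4\plus\alpha X^3\plus\alpha^2X^2\plus\alpha^2X\plus\alpha^2$. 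Your own test point $x_0=\theta^\flat_4(\alpha^2)=\alpha$ gives $p_4x_0^{2}=\alpha^2\not\leqslant q_2$.

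The reason is that $\theta^\flat$ rounds the interpolated value $p_{j_1}^{j_2-j}p_{j_2}^{j-j_1}$ only once. The true infimum is a value at an actual point $x\in\cis^*$, and the rounding error in $x$ gets raised to the powers $j_2-j$ and $j-j_1$. Your pairwise formula is what one obtains in the radicable case (it is then \eqref{eq:pjbar2}), and it happens to be right in degree $2$ (Example~\ref{ex:deg2bis}). It does not hold in general. Incidentally, in the totally ordered preradicable case the infimum is always attained, at one of finitely many breakpoints, contrary to your remark.

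\textbf{What is missing.} The paper's proof does not try to balance monomials. Set $m_k=\Plus_{\ell<k}\theta^\flat_{k-\ell}(p_k^{-1}p_\ell)$ and $M_k=\bigwedge_{\ell>k}\theta^\#_{\ell-k}(p_\ell^{-1}p_k)$, with indices in $\supp p$. Total order gives $\cis^*=\bigcup_kI_k$ with $\widehat{p}(x)=p_kx^k$ exactly on $I_k=[m_k,M_k]$. Since $x\mapsto p_kx^{k-j}$ is monotone on each nonempty $I_k$, $\overline{p}_j$ is the finite minimum \eqref{eq:calctotord} of the values at the actual breakpoints $m_k$ and $M_k$.

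Your fallback, induction on the degree by splitting off $x\plus c_1$, is a genuinely different and viable route. As written, however, it omits its only nontrivial step. You must exhibit $r$ of degree $n-1$ with $\widehat{p}(x)=(x\plus c_1)\widehat{r}(x)$, for instance $r=p_nX^{n-1}\plus c_1^{-1}\Plus_{j<n}p_jX^j$.
\begin{itemize}
\item For $x\geqslant c_1$ the identity is immediate.
\item For $0\neq x<c_1$ it amounts to $p_nc_1x^{n-1}\leqslant\Plus_{j<n}p_jx^j$, which needs its own argument. Let $j^*$ realize the sup defining $c_1$, and put $y:=p_{j^*}p_n^{-1}$ and $m:=n-j^*$. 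If $c_1x^{m-1}>y$, then $s:=(yx^{1-m})\vee x<c_1$ satisfies $s^m\geqslant y$. This contradicts the minimality of $c_1=\theta^\flat_m(y)$.
\end{itemize}
Without this verification, or the paper's interval decomposition, the converse implication remains unproved.
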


\begin{proof}
If $\cis$ is algebraically closed, it is preradicable by Proposition~\ref{prop:prerad}. 
Conversely, assume that $\cis$ is preradicable. 
For every $k \in \mathbb{N}^*$, we denote by $\theta_k$ the map $\cis \rightarrow \cis, x \mapsto x^k$. 
Since $\cis$ is preradicable, there are maps $\theta_k^{\#}$ and $\theta_k^{\flat}$ such that 
\begin{align*}
x^k \leqslant y &\Leftrightarrow x \leqslant \theta_k^{\#}(y) \\
x \leqslant y^k &\Leftrightarrow \theta_k^{\flat}(x) \leqslant y, 
\end{align*}
for all $x, y \in \cis$. 
Now let $p \in \cis[X]$ be a polynomial of degree $n > 0$ and valuation $v$, and let $j \in \mathbb{N}$ with $v < j < n$. 
Take $A(p,j) := \{ \widehat{p}(x) x^{-j} : x\in \cis^* \}$, and let us show that $A(p, j)$ has an inf. 
For every $k \in \supp p$, take 
\[
m_k := \sum_{\ell < k, \ell \in \supp p} \theta_{k - \ell}^{\flat}(p_k^{-1} p_{\ell}^{})
\]
and 
\[
M_k := \bigwedge_{\ell > k, \ell \in \supp p} \theta_{\ell - k}^{\#}(p_{\ell}^{-1} p_k^{}). 
\]
Since $\cis$ is totally ordered, it is not difficult to show that $\cis^*$ can be decomposed as $\cis^* = \bigcup_k I_k$, 
where $I_v = (0, M_v]$, $I_k = [m_k, M_k]$ if $k \in \supp p$ and $v < k < n$, and $I_n = [m_n, \infty)$. 
Moreover, $x \in I_k$ if and only if $\widehat{p}(x) = p_k x^k$. 
Note that the interval $I_k$ can be empty, since we do not necessarily have $m_k \leqslant M_k$. 
Let $K$ denote the set of indices $k \in \supp p$ such that $I_k \neq \emptyset$. 
We deduce that 
\[
A(p,j) = \bigcup_{k \in K} \{ p_k x^{k-j} : x \in I_k \}. 
\]
We conclude that $A(p,j)$ has an inf given by  
\begin{equation}\label{eq:calctotord}
\overline{p}_j = \bigwedge_{k \in K, k > j} p_k^{} m_k^{k-j} \wedge \bigwedge_{k \in K, k \leqslant j} p_k^{} M_k^{k-j}. 
\end{equation}
This proves that $\cis$ is algebraically closed. 
\end{proof}

\section{Radicability and polynomial equations}\label{sec:eq}

In this section, we introduce \textit{radicability} as a property stronger than preradicability to qualify an idempotent semifield. 
This property attests to its strength as its provides existence of solutions to polynomial equations. 
We also prove that every radicable idempotent semifield is algebraically closed and order-dense, and that the converse statement holds true if the idempotent semifield is totally ordered. 
In the last part of the section, we consider \textit{rational polynomials}, defined as formal polynomials modulo a non-zero polynomial factor. 
We prove that, given a radicable idempotent semifield, there is an isomorphism of semirings between rational polynomials and polynomial functions, and that every rational polynomial splits. 

\subsection{Definition and first properties} 

An idempotent semifield $\cis$ is \textit{radicable} if, for all $y \in \cis$, $n \in \mathbb{N}^*$, there is an element (necessarily unique, by Lemma~\ref{prop:perfore}) $x \in \cis$ such that $x^n = y$, that we denote by $y^{1/n}$. 
Obviously, every radicable idempotent semifield is preradicable. 


The following lemma extends Shpiz \cite[Proposition~2]{Shpiz00}. 

\begin{lemma}\label{lem:pinv}
Assume that $\cis$ is radicable, 
and let $p \in \cis[X]$. 
Then the following conditions are equivalent: 
\begin{itemize}
	\item $\widehat{p}$ is residuated; 
	\item $\widehat{p}$ is surjective; 
	\item $\widehat{p}$ is injective; 
	\item $\widehat{p}$ is bijective; 
	\item $\widehat{p}$ is an order-embedding; 
	\item $p_0 = 0$ and $\deg p > 0$. 
\end{itemize}
\end{lemma}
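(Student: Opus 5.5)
Since $\cis$ is radicable, it is in particular preradicable, so Lemma~\ref{lem:pres} already gives the equivalence of ``residuated'', ``injective'', ``order-embedding'' and ``$p_0 = 0$ and $\deg p > 0$''. It therefore remains to bring surjectivity and bijectivity into this cycle. The plan is to prove that $p_0 = 0$ and $\deg p > 0$ imply surjectivity, and that surjectivity implies $p_0 = 0$ and $\deg p > 0$. Bijectivity then follows from injective plus surjective, and conversely bijective trivially implies injective.

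\emph{Surjective implies $p_0 = 0$ and $\deg p > 0$.} If $\deg p \leqslant 0$, then $\widehat{p}$ is constant, hence not surjective, because $\cis$ is infinite. If $p_0 \neq 0$, then $\widehat{p}(x) \geqslant p_0 > 0$ for every $x$, so $0$ is not attained. This step is easy.

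\emph{$p_0 = 0$ and $\deg p > 0$ imply surjective.} This is the main step. I will use the residual map $\widehat{p}^{\#}$ from Lemma~\ref{lem:pres}, given by Equation~\eqref{eq:psharp}. In the radicable case, $\theta_k^{\#}(z) = z^{1/k}$, so
\[
\widehat{p}^{\#}(y) = \bigwedge_{k \in \supp p} (p_k^{-1} y)^{1/k}.
\]
Set $x_0 := \widehat{p}^{\#}(y)$. Residuation gives $\widehat{p}(x_0) \leqslant y$, and it remains to prove $\widehat{p}(x_0) \geqslant y$. For $y = 0$ we have $x_0 = 0$ and $\widehat{p}(0) = p_0 = 0$, so assume $y \neq 0$.

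For the reverse inequality, I will use the Frobenius identity and Lemma~\ref{prop:sn}. Put $N := \prod_{k \in \supp p} k$, or any common multiple of the $k$'s, and write $N = k\, n_k$. Then
\[
x_0^N = \bigwedge_{k} (p_k^{-1} y)^{n_k}.
\]
The aim is to show $\widehat{p}(x_0)^N = \Plus_k p_k^N x_0^{kN} \geqslant y^N$; Lemma~\ref{prop:perfore} will then give $\widehat{p}(x_0) \geqslant y$.

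In the totally ordered case this is immediate: the inf is attained at some $k$, and for that index $p_k x_0^k = y$. In general I expect the cleanest route is distributivity, since $\cis^*$ is an $\ell$-group and hence a distributive lattice. Writing $a_k := (p_k^{-1} y)^{1/k}$, so that $p_k a_k^k = y$, we get
\[
\Plus_k p_k x_0^k = \Plus_k p_k \Bigl(\bigwedge_\ell a_\ell\Bigr)^k .
\]
One then wants this sum to be $\geqslant y$. Equivalently, dividing by $y$, with $u_k := a_k^{-1} x_0 \leqslant 1$, one wants $\Plus_k u_k^k = 1$, where $x_0 = \bigwedge_\ell a_\ell$. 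Since $u_k^k \geqslant u_k^{d}$ with $d = \deg p$ (because $u_k \leqslant 1$), it suffices to show $\Plus_k u_k^d = 1$. By Frobenius, $\Plus_k u_k^d = (\Plus_k u_k)^d$, and
\[
\Plus_k u_k = x_0 \Plus_k a_k^{-1} = \Bigl(\bigwedge_k a_k\Bigr)\Bigl(\bigwedge_k a_k\Bigr)^{-1} = 1
\]
by Equation~\eqref{eq:cisinf}. This yields $\widehat{p}(x_0) \geqslant y$.

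The main obstacle is this surjectivity step in the non-totally-ordered setting. The reduction to $\Plus_k a_k^{-1} = (\bigwedge_k a_k)^{-1}$, together with the monotonicity trick $u^k \geqslant u^d$ for $u \leqslant 1$, is what I will try first to handle it.
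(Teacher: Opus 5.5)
Your proof is correct. Its skeleton matches the paper's: you reduce to Lemma~\ref{lem:pres}, handle the converse direction trivially, and take $x_0 = \widehat{p}^{\#}(y) = \bigwedge_k a_k$. The difference is in how you prove the key inequality $\widehat{p}(x_0) \geqslant y$.

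The paper just invokes Proposition~\ref{prop:infcom}. Since the inf is finite (and $p_0 = 0$), $\widehat{p}$ preserves it, so $\widehat{p}(x_0) = \bigwedge_k \widehat{p}(a_k)$. Each term satisfies $\widehat{p}(a_k) \geqslant p_k a_k^k = y$, which finishes the proof in one line.

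Your route works as follows:
\begin{enumerate}
\item You normalize by $y$ and set $u_k = a_k^{-1} x_0 \leqslant 1$.
\item You use the monotonicity bound $u_k^k \geqslant u_k^d$ for $u_k \leqslant 1$.
\item You apply the Frobenius identity to get $\Plus_k u_k^k \geqslant (\Plus_k u_k)^d$.
\item You conclude from the iterated inversion formula $\Plus_k a_k^{-1} = (\bigwedge_k a_k)^{-1}$ (Equation~\eqref{eq:cisinf}) that $\Plus_k u_k = 1$.
\end{enumerate}
This is valid. Note that it needs $x_0 \neq 0$, which your separate treatment of $y = 0$ guarantees.

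In effect you reprove by hand the special case of inf-preservation that is needed here. The paper's argument is shorter because that work is already packaged in the lattice-morphism property of $\widehat{p}$, itself proved from Frobenius and Lemma~\ref{prop:sn}. Yours is more self-contained, using only the Frobenius identity and the inversion formula in $\cis^*$.

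Two small clean-ups. The preliminary computation with $N$ and $x_0^N$ is never used and can be dropped. Distributivity of the lattice plays no role either; what you actually use is that inversion on $\cis^*$ exchanges finite infs and sups.
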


\begin{proof}
Thanks to Lemma~\ref{lem:pres}, it suffices to show that $\widehat{p}$ is surjective if and only if $p_0 = 0$ and $\deg p > 0$. 
Assume that $\widehat{p}$ is surjective. 
Then there is some $x$ such that $\widehat{p}(x) = 0$. 
Since $0 \leqslant p_0 \leqslant \widehat{p}(x) = 0$, we get $p_0 = 0$. 
Moreover, as a $\cis$-valued surjective map, $\widehat{p}$ cannot be constant, hence $\deg p > 0$. 

Conversely, assume that $p_0 = 0$ and $\deg p > 0$. Let $y \in \cis$.  
By Lemma~\ref{lem:pres} $\widehat{p}$ is residuated, so we can take $x := \widehat{p}^{\#}(y)$. 
By residuation we know that $\widehat{p}(x) \leqslant y$, so we now prove that $\widehat{p}(x) \geqslant y$. 
Using Equation~\eqref{eq:psharp}, 
\[
x = \bigwedge_{k \in \supp p} (p_k^{-1} y)^{1/k}. 
\]
By Proposition~\ref{prop:infcom}, $\widehat{p}$ preserves arbitrary nonempty infs, so that 
\[
\widehat{p}(x) = \bigwedge_{k \in \supp p} \widehat{p}\left((p_k^{-1} y)^{1/k}\right)
\]
For every $k \in \supp p$, take $z_k := (p_k^{-1} y)^{1/k}$. 
Then $\widehat{p}(z_k) \geqslant p_k^{} z_k^k = p_k^{} p_k^{-1} y = y$. 
Thus, $\widehat{p}(x) \geqslant y$, so that $\widehat{p}(x) = y$. 
This proves that $\widehat{p}$ is surjective. 
\end{proof}

\subsection{Radicability and solutions to polynomial equations}

The following result is due to Shpiz \cite{Shpiz00} and can be seen as an intermediate value theorem for polynomial functions. 

\begin{theorem}[Shpiz]
Assume that $\cis$ is radicable, 
and let $p \in \cis[X]$ and $t \in \cis$. 
If $\deg p > 0$, then the equation $\widehat{p}(x) = t$ has a (greatest) solution in $\cis$ if and only if $p_0 \leqslant t$. 
Moreover, if $p_0 = 0$, then the solution is unique. 
\end{theorem}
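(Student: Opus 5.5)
The plan is to reduce to Lemma~\ref{lem:pinv} and Proposition~\ref{prop:ineqc} by splitting off the constant term. Necessity is immediate: if $\widehat{p}(x) = t$ for some $x$, then $p_0 \leqslant \widehat{p}(x) = t$.

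For sufficiency, assume $p_0 \leqslant t$ and write $p(X) = q(X) \plus p_0$ with $q(X) := X p'(X)$. Then $q_0 = 0$ and $\deg q = \deg p > 0$. Since a radicable semifield is preradicable, Lemma~\ref{lem:pinv} applies to $q$: the map $\widehat{q}$ is bijective, residuated and an order-embedding.

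Take $x_0 := \widehat{q}^{\#}(t)$. Surjectivity alone is not enough here, because the point must also be the greatest solution. However, the proof of Lemma~\ref{lem:pinv} shows exactly that $\widehat{q}(\widehat{q}^{\#}(y)) = y$ for every $y$. So $\widehat{q}(x_0) = t$, and therefore
\[
\widehat{p}(x_0) = \widehat{q}(x_0) \plus p_0 = t \plus p_0 = t ,
\]
using $p_0 \leqslant t$ in the last step. This gives existence.

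For maximality, let $x_1$ be any solution, so $\widehat{p}(x_1) = t$. Then $\widehat{q}(x_1) \leqslant \widehat{p}(x_1) = t$, and residuation gives $x_1 \leqslant \widehat{q}^{\#}(t) = x_0$. Hence $x_0$ is the greatest solution. Equivalently, one can invoke Proposition~\ref{prop:ineqc}, which already shows that $x_0$ is the greatest solution of the inequality $\widehat{p}(x) \leqslant t$, and note that $x_0$ in fact attains equality.

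For uniqueness when $p_0 = 0$: in that case $p = q$, and $\widehat{p}$ is injective by Lemma~\ref{lem:pinv} (or by Proposition~\ref{prop:golan}), so the solution is unique.

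The only step requiring care is showing that $\widehat{q}(\widehat{q}^{\#}(t))$ equals $t$ rather than merely being bounded by $t$. This is precisely where radicability is used, through the explicit formula $x = \bigwedge_{k} (q_k^{-1}t)^{1/k}$ and the preservation of finite infs (Proposition~\ref{prop:infcom}), both already worked out in the proof of Lemma~\ref{lem:pinv}. So I will cite that computation rather than redo it.
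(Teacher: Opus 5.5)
Your proposal is correct and is essentially the paper's own argument: split off the constant term via $q = Xp'$, use Lemma~\ref{lem:pinv} to get $\widehat{q}$ bijective and residuated, take $x_0 = \widehat{q}^{\#}(t)$ as the greatest solution, and get uniqueness from injectivity when $p_0 = 0$. Incidentally, you do not need to reopen the proof of Lemma~\ref{lem:pinv}, contrary to your remark that surjectivity alone is not enough. Surjectivity gives some $z$ with $\widehat{q}(z) = t$. Residuation then gives $z \leqslant \widehat{q}^{\#}(t)$, and monotonicity gives $t = \widehat{q}(z) \leqslant \widehat{q}(\widehat{q}^{\#}(t)) \leqslant t$.
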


\begin{proof}
The map $\widehat{q}$ defined in the proof of Proposition~\ref{prop:ineqc} is now invertible by Lemma \ref{lem:pinv}, and $x_0 = \widehat{q}^{\#}(t)$ is the greatest solution to the equation $\widehat{p}(x) = t$. 
If $p_0 = 0$, uniqueness is clear by Lemma~\ref{lem:pinv}. 
\end{proof}

Following the notion of a root of a polynomial proposed by Castella \cite[p.~7]{Castella13}, we say that $r \in \cis$ is a \textit{weak root} of the polynomial $p \in \cis[X]$  if $p(X) = a(X) \plus b(X)$ for some $a, b \in \cis[X]$ with disjoint supports such that $\widehat{a}(r) = \widehat{b}(r)$. 

\begin{corollary}
An idempotent semifield is radicable if and only if every polynomial of degree $> 0$ has a weak root. 
\end{corollary}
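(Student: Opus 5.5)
The plan is to prove the two implications separately, using Shpiz's theorem for the forward direction and a degree-one polynomial for the converse.

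\emph{Radicable implies weak roots.} Assume $\cis$ is radicable and let $p \in \cis[X]$ have degree $n > 0$. If $p_0 = 0$, take $a := 0$ and $b := p$. Their supports are disjoint, and $\widehat{a}(0) = 0 = \widehat{b}(0)$, so $r = 0$ is a weak root. If $p_0 \neq 0$, split $p$ as
\[
a(X) := X p'(X) = \Plus_{j \geqslant 1} p_j X^j, \qquad b(X) := p_0 .
\]
These have disjoint supports, and $a$ satisfies $a_0 = 0$ and $\deg a = n > 0$. Shpiz's theorem, applied to $a$ with $t := p_0$ (the condition $a_0 = 0 \leqslant p_0$ holds trivially), gives some $r \in \cis$ with $\widehat{a}(r) = p_0 = \widehat{b}(r)$. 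Hence $r$ is a weak root of $p$.

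\emph{Weak roots imply radicable.} Assume every polynomial of degree $> 0$ has a weak root. Let $y \in \cis$ and $n \in \mathbb{N}^*$. If $y = 0$, then $x = 0$ satisfies $x^n = y$, so assume $y \neq 0$. Consider $p(X) := X^n \plus y$, which has support $\{0, n\}$. A decomposition $p = a \plus b$ with disjoint supports must be one of three types:
\begin{itemize}
\item $a = 0$ and $b = p$ (or the symmetric one). Then $\widehat{b}(r) \geqslant y > 0 = \widehat{a}(r)$, so no weak root arises.
\item $a = X^n$ and $b = y$. A weak root then gives $r^n = y$.
\end{itemize}
Since $p$ has a weak root by hypothesis, and the first type is impossible, the second type must occur. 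Thus $y$ has an $n$-th root, and $\cis$ is radicable.

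The main point to check is that "disjoint supports" rules out any other way of splitting coefficients. This works because each coefficient $p_j$ must go entirely into $a$ or entirely into $b$. The edge cases $y = 0$ and $p_0 = 0$ are each handled by a trivial root or a trivial split.
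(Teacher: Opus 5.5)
Your proof is correct and follows the paper's argument. The forward direction applies Shpiz's theorem to $X p'(X)$ with right-hand side $p_0$, and the converse uses $X^n + y$. Your case analysis of the disjoint-support decompositions (ruling out $a = 0$, $b = p$ when $y \neq 0$) and your separate handling of $p_0 = 0$ only spell out details the paper leaves implicit.
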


\begin{proof}
Let $\cis$ be radicable, and let $p \in \cis[X]$ of degree $> 0$. 
Take $q(X) := X p'(X)$. 
Then the (unique) solution of the equation $\widehat{q}(x) = p_0$ is a weak root of $p$. 

Conversely, assume that every polynomial of degree $> 0$ has a weak root. 
Consider in particular the polynomial $p(X) = X^n \plus t$, with $n \in \mathbb{N}^*$ and $t \in \cis$. 
Then a weak root $x$ of $p$ satisfies $x^n = t$. 
This shows that $\cis$ is radicable. 
\end{proof}

The following result was stated by Bogdanov \cite[Theorem~4]{Bogdanov04} without proof. 

\begin{theorem}[Bogdanov]\label{thm:bog}
Assume that $\cis$ is radicable, 
and let $p, q \in \cis[X]$. 
If $\val p > \deg q > -\infty$, then the equation $\widehat{p}(x) = \widehat{q}(x)$ has a least non-zero solution in $\cis$. 
\end{theorem}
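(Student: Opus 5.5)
The plan is to reduce the equation to a family of one-sided inequalities, each governed by a bijective polynomial function as in Lemma~\ref{lem:pinv}. Let $v := \val p$ and $m := \deg q$. The hypothesis gives $0 \leqslant m < v \leqslant \deg p$. Every $l \in \supp q$ satisfies $l \leqslant m < v$.

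\textbf{Step 1: a bijection for each $l \in \supp q$.} Fix $l \in \supp q$ and let $R_l(X) := \sum_{k \in \supp p} p_k X^{k-l}$. All exponents are $k - l \geqslant v - l > 0$, so $R_l$ has zero constant term and positive degree. Moreover $\widehat{R_l}(x) = \widehat{p}(x)\,x^{-l}$ for every $x \in \cis^*$. By Lemma~\ref{lem:pinv}, since $\cis$ is radicable, $\widehat{R_l}$ is a bijective order-embedding. Define $a_l := \widehat{R_l}^{-1}(q_l)$. Since $q_l \neq 0$ and $\widehat{R_l}(0) = 0$, injectivity gives $a_l \neq 0$. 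The defining identity is
\[
\widehat{p}(a_l) = q_l\, a_l^{l}.
\]

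\textbf{Step 2: the set of solutions of the inequality.} Let $S := \{x \in \cis^* : \widehat{p}(x) \geqslant \widehat{q}(x)\}$. For $x \neq 0$, the inequality $\widehat{p}(x) \geqslant \widehat{q}(x)$ is equivalent to $\widehat{p}(x) \geqslant q_l x^l$ for all $l \in \supp q$. This is the same as $\widehat{R_l}(x) \geqslant q_l$ for all such $l$. Because $\widehat{R_l}$ is an order-embedding, that condition is equivalent to $x \geqslant a_l$ for all such $l$. Setting
\[
a := \Plus_{l \in \supp q} a_l,
\]
we get $S = \{x \in \cis : x \geqslant a\}$. The element $a$ is nonzero because $\supp q$ is nonempty and each $a_l \neq 0$. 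Every nonzero solution of $\widehat{p}(x) = \widehat{q}(x)$ lies in $S$, hence is $\geqslant a$. It therefore remains only to show that $a$ itself is a solution.

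\textbf{Step 3: $a$ solves the equation.} Since $a \in S$, we already have $\widehat{p}(a) \geqslant \widehat{q}(a)$. For the reverse inequality, use that $\widehat{p}$ preserves finite sups (Proposition~\ref{prop:infcom}):
\[
\widehat{p}(a) = \Plus_{l \in \supp q} \widehat{p}(a_l) = \Plus_{l \in \supp q} q_l\, a_l^{l} \leqslant \Plus_{l \in \supp q} q_l\, a^{l} = \widehat{q}(a).
\]
Hence $\widehat{p}(a) = \widehat{q}(a)$, and $a$ is the least nonzero solution.

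\textbf{Main obstacle.} The potential obstacle is Step 2. In a non-totally-ordered $\cis$, the condition $\widehat{p}(x) \geqslant \widehat{q}(x)$ cannot be split into termwise comparisons $p_k x^k \geqslant q_l x^l$. The device of grouping all of $p$ into the single function $\widehat{R_l}$, paired with one monomial of $q$ at a time, avoids this. It works precisely because $v > m$ makes every $R_l$ a polynomial with zero constant term and positive degree, so Lemma~\ref{lem:pinv} applies. The hypothesis $\deg q > -\infty$ is used only to guarantee that $\supp q \neq \emptyset$, so that $a \neq 0$.
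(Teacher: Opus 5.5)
Your proof is correct and is essentially the paper's: your $R_l$ is the derivative $p^{(l)}$ (its terms of exponent below $l$ vanish because $\val p > l$), your $a_l$ is the paper's $y_l = \widehat{p^{(l)}}^{\#}(q_l)$, and your $a$ is its $x_0 = y_0 \plus \cdots \plus y_k$ (the paper's extra $y_j$ with $q_j = 0$ are zero). The only difference is in presentation: you describe the nonzero solutions of $\widehat{p}(x) \geqslant \widehat{q}(x)$ as the up-set of $a$ using the bijectivity from Lemma~\ref{lem:pinv}, which makes explicit the order-embedding property that the paper's leastness step uses tacitly.
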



\begin{proof}
Take $k := \deg q > -\infty$. 
Since $\val p > k$, the successive derivatives $p^{(0)}, \ldots, p^{(k)}$ of $p$ induce residuated functions by Lemma~\ref{lem:pres}. 
Take 
\[
x_0 := y_0 \plus \cdots \plus y_k \mbox{ , where } y_j = \widehat{p^{(j)}}^{\#}(q_j). 
\]
Then $x_0 \neq 0$ since $x_0 \geqslant y_k > 0$ by the second part of Lemma~\ref{lem:pres}. 
Moreover, $\widehat{p}(x_0) = \widehat{p}(y_0) \plus \cdots \plus \widehat{p}(y_k)$ by the additivity of $\widehat{p}$ (Proposition~\ref{prop:infcom}). 
It follows that $\widehat{p}(x_0^{}) \leqslant q_0^{} \plus \cdots \plus q_k^{} y_k^k \leqslant \widehat{q}(x_0^{})$. 
Moreover, for every $j \in \mathbb{N}$ with $0 \leqslant j \leqslant k$, we have $x_0 \geqslant y_j$, so 
\[
\widehat{p}(x_0^{}) = \widehat{p^{(j)}}(x_0^{})  x_0^{j} \geqslant q_j^{} x_0^{j}. 
\] 
This shows that $\widehat{p}(x_0) \geqslant \widehat{q}(x_0)$. 
Thus, we have found our non-zero $x_0$ such that $\widehat{p}(x_0) = \widehat{q}(x_0)$. 

To conclude the proof, we show that $x_0$ is the \textit{least} non-zero solution. 
Let $x$ be another non-zero solution. 
For every $j \in \mathbb{N}$ with $0 \leqslant j \leqslant k$, we have $\widehat{p}(x) = \widehat{q}(x) \geqslant q_j x^j$, hence 
\[
\widehat{p^{(j)}}(x) = \widehat{p}(x) x^{-j} \geqslant q_j. 
\]
By definition of $y_j$, we get $x \geqslant y_j$. 
This implies $x \geqslant y_0 \plus \cdots \plus y_k = x_0$, as required. 
\end{proof}

\begin{remark}[Fixed points]
Given a polynomial $p \in \cis[X]$, it is easily seen that the polynomial function $\widehat{p}$ has a fixed point (i.e.\ an element $x \in \cis$ with $\widehat{p}(x) = x$) if and only if $p_0 = 0$ or $\widehat{p}'(p_0) \leqslant 1$, in which case $p_0$ is the least fixed point of $\widehat{p}$. 
In particular, we may have no solution to the equation $\widehat{p}(x) = \widehat{q}(x)$ if $\val p \leqslant \deg q$: 
take for instance $q(X) = X$ and $p(X)$ such that $p_0 > 0$ and $\widehat{p}'(p_0) \not\leqslant 1$. 
\end{remark}

An idempotent semifield is \textit{$\sigma$-complete} if every countable subset bounded above has a sup. 

\begin{theorem}
Assume that $\cis$ is radicable and $\sigma$-complete,  
and let $p, q \in \cis[X]$. 
If $q_0 = 0$ and the inequality $\widehat{p}(x) \leqslant \widehat{q}(x)$ has a solution in $\cis$, then the equation $\widehat{p}(x) = \widehat{q}(x)$ has a solution in $\cis$. 
\end{theorem}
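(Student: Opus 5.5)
The plan is a Kleene-type fixed-point iteration from $0$, carried out with the inverse of $\widehat{q}$ and using $\sigma$-completeness to take the supremum of the iterates.

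\emph{Degenerate case.} If $q = 0$, let $x_1$ be a solution of the inequality. Then $\widehat{p}(x_1) \leqslant 0$, so $\widehat{p}(x_1) = 0 = \widehat{q}(x_1)$ and $x_1$ already solves the equation. If $q \neq 0$, then $q_0 = 0$ forces $\deg q > 0$. In that case Lemma~\ref{lem:pinv}, which uses radicability, shows that $\widehat{q}$ is a bijective order-embedding of $\cis$ onto itself. Its inverse $\widehat{q}^{-1} = \widehat{q}^{\#}$ is therefore an order-isomorphism, and so it preserves every supremum that exists.

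\emph{Reformulation.} Define $F := \widehat{q}^{-1} \circ \widehat{p} : \cis \to \cis$. Since $\widehat{q}$ is an order-embedding, $\widehat{p}(x) \leqslant \widehat{q}(x)$ is equivalent to $F(x) \leqslant x$, and $\widehat{p}(x) = \widehat{q}(x)$ is equivalent to $F(x) = x$. The map $F$ is order-preserving. By Proposition~\ref{prop:infcom}, $\widehat{p}$ preserves sups of nonempty subsets, and $\widehat{q}^{-1}$ preserves all existing sups. Hence $F$ preserves sups of nonempty subsets whenever they exist. The goal becomes: a monotone, sup-preserving $F$ with a post-fixed point $x_1$, meaning $F(x_1) \leqslant x_1$, has a fixed point.

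\emph{Iteration.} Set $x^{(0)} := 0$ and $x^{(k+1)} := F(x^{(k)})$.
\begin{itemize}
\item The sequence is increasing. Indeed $x^{(0)} = 0 \leqslant x^{(1)}$, and monotonicity of $F$ propagates this by induction.
\item It is bounded above by $x_1$. Indeed $x^{(0)} \leqslant x_1$, and if $x^{(k)} \leqslant x_1$ then $x^{(k+1)} = F(x^{(k)}) \leqslant F(x_1) \leqslant x_1$.
\end{itemize}
By $\sigma$-completeness the countable set $\{x^{(k)}\}$ has a sup $x^*$. Applying sup-preservation to this nonempty set gives
\[
F(x^*) = \bigvee_k F(x^{(k)}) = \bigvee_k x^{(k+1)} = x^*,
\]
so $\widehat{p}(x^*) = \widehat{q}(x^*)$. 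As a bonus, $x^*$ lies below every solution of the inequality, so it is the least solution of the equation.

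\emph{Main point to check.} The step needing care is that $F$ commutes with the countable sup. This follows from two facts: the first assertion of Proposition~\ref{prop:infcom}, which requires no non-zero condition for sups of nonempty sets, and the fact that an order-isomorphism preserves all existing sups. Radicability, through Lemma~\ref{lem:pinv}, is exactly what makes $\widehat{q}$ invertible rather than merely residuated. Without invertibility, the identity $F(x^*) = x^*$ would only give $\widehat{p}(x^*) \leqslant \widehat{q}(x^*)$, not equality.
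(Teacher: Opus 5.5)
Your proof is correct and is essentially the paper's argument: the same iteration $x_{n+1} = \widehat{q}^{\#}(\widehat{p}(x_n))$ from $x_0 = 0$, bounded by the given solution of the inequality and increasing by monotonicity. Its supremum exists by $\sigma$-completeness and solves the equation by Proposition~\ref{prop:infcom}. The differences are cosmetic: you push the supremum through the inverse order-isomorphism $\widehat{q}^{-1}$ where the paper applies sup-preservation to $\widehat{q}$ directly, and you also observe that the limit is the least solution of the equation.
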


\begin{proof}
Let $y \in \cis$ be such that $\widehat{p}(y) \leqslant \widehat{q}(y)$. 
If $\deg q \leqslant 0$, then $\widehat{q}(y) = 0$, so $p_0 = 0$, and $x = 0$ is a solution of the equation $\widehat{p}(x) = \widehat{q}(x)$. 
Now assume that $\deg q > 0$. 
Since $\cis$ is radicable, $\widehat{q}$ is a bijection and an order-embedding by Lemma~\ref{lem:pinv}. 
Let $\widehat{q}^{\#}$ be the inverse map of $\widehat{q}$. 
Take $x_0 := 0$, and consider the sequence $(x_n)_{n \in \mathbb{N}}$ defined recursively by 
\[
x_{n+1} = \widehat{q}^{\#}(\widehat{p}(x_{n})),
\]
for all $n \in \mathbb{N}$. 
We prove by induction that $x_{n} \leqslant y$, for all $n \in \mathbb{N}$. 
This holds for $n = 0$.  
Moreover, if $x_{n} \leqslant y$ for some $n \in \mathbb{N}$, then $\widehat{q}(x_{n+1}) = \widehat{p}(x_{n}) \leqslant \widehat{p}(y) \leqslant \widehat{q}(y)$, so that $x_{n+1} \leqslant y$. 

Now we prove by induction that $x_{n} \leqslant x_{n+1}$, for all $n \in \mathbb{N}$. 
This holds for $n = 0$. 
Moreover, if $x_{n} \leqslant x_{n+1}$ for some $n \in \mathbb{N}$, then $\widehat{q}(x_{n+1}) = \widehat{p}(x_{n}) \leqslant \widehat{p}(x_{n+1}) = \widehat{q}(x_{n+2})$, so that $x_{n+1} \leqslant x_{n+2}$. 

Thus, $(x_n)_{n \in \mathbb{N}}$ is a non-decreasing sequence bounded above. 
Since $\cis$ is $\sigma$-complete, we can take $x := { \bigvee_{n \in \mathbb{N}} x_n }$. 
Using Proposition~\ref{prop:infcom} and the fact that $\widehat{q}(x_{n+1}) = \widehat{p}(x_{n})$ for all $n \in \mathbb{N}$, we obtain $\widehat{p}(x) = \widehat{q}(x)$, as required. 
\end{proof}



\begin{remark}
Rump \cite[Corollary, p.~42]{Rump16} showed that the equation 
\[
a x \plus b = c x \plus d,
\]
with $a, c \in \cis^*$ and $b, d \in \cis$, has a solution if and only if 
\begin{align*}
a d \wedge b c &\leqslant a b \leqslant a d \vee b c \quad \mbox{ and } \\
a d \wedge b c &\leqslant c d \leqslant a d \vee b c. 
\end{align*}
In this case, there is a unique solution $x$ such that $\alpha \wedge \beta \leqslant x \leqslant \alpha \vee \beta$, with $\alpha := a^{-1} b$ and $\beta := c^{-1} d$, given by 
\[
x = (b \plus d) (a \plus c)^{-1}.
\] 
\end{remark}


\subsection{Links between radicability and algebraic closedness}

An idempotent semifield is said to be \textit{order-dense} if, whenever $r < t$, there exists some $s$ with $r < s < t$. 
The following lemma extends Butkovi\v{c} \cite[Proposition~1.1]{Butkovic94}. 

\begin{lemma}[Butkovi\v{c}]\label{lem:orderdense}
If $\cis$ is radicable, 
then $\cis$ is order-dense. 
\end{lemma}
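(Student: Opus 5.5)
Given $r < t$ in $\cis$, the natural candidate for an intermediate element is a geometric mean of $r$ and $t$. Since $\cis$ is radicable, the square root $(rt)^{1/2}$ exists. If $r \neq 0$, I will take $s := (rt)^{1/2}$. If $r = 0$, the geometric mean is $0$, so this case needs a separate choice.

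First, assume $r \neq 0$. Then $r$ is invertible, and I compare $s^2 = rt$ with $r^2$ and $t^2$. From $r \leqslant t$ I get $r^2 \leqslant rt \leqslant t^2$, so $r^2 \leqslant s^2 \leqslant t^2$. Lemma~\ref{prop:perfore} then gives $r \leqslant s \leqslant t$. For strictness, suppose $s = r$. Then $rt = r^2$, and multiplying by $r^{-1}$ gives $t = r$, a contradiction. Similarly, if $s = t$, then $rt = t^2$; since $t \neq 0$ (because $t > r \geqslant 0$), we get $r = t$, again a contradiction. Hence $r < s < t$.

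Second, assume $r = 0$, so $t \neq 0$. I want an element strictly between $0$ and $t$. The natural choice is $s := t \cdot (1 \wedge u)$, where $u \neq 1$ is any non-zero element; such a $u$ exists because $\cis \neq \mathbb{B}$, as assumed in the standing disclaimer. Replacing $u$ by $u^{-1}$ if necessary, we may assume $u \not\geqslant 1$, and then $e := 1 \wedge u$ satisfies $0 < e < 1$. Setting $s := te$, we have $s \neq 0$ since $s$ is a product of invertible elements, so $s > 0$. Also $s \leqslant t$, and $s = t$ would force $e = 1$, so $s < t$. Note that radicability is not needed in this case.

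The only delicate step is the strictness argument, which relies on cancellation by non-zero elements. That cancellation is exactly why the case $r = 0$ must be handled separately by the construction above.
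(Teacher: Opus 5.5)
Your proof is correct. The case $r \neq 0$ is the paper's own argument: take the geometric mean $(rt)^{1/2}$. You also spell out the verification through $r^2 \leqslant rt \leqslant t^2$, Lemma~\ref{prop:perfore} and cancellation, which the paper leaves to the reader.

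The case $r = 0$ is handled differently. The paper splits on $t$. If $t = 1$, it invokes $\cis \neq \mathbb{B}$. If $t < 1$, it takes $t^2$. Otherwise it takes $t \wedge t^{-1}$, which implicitly needs Lemma~\ref{prop:perfore} to see that $t \wedge t^{-1} \neq t$.

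You instead fix a single element $e = 1 \wedge u$ with $0 < e < 1$ and rescale it by $t$. This works for every $t \neq 0$ because multiplication by $t$ is an order automorphism of $\cis^*$. Your version is more uniform: it removes the case split on $t$ and avoids the extra appeal to Lemma~\ref{prop:perfore}. Like the paper's, it makes clear that radicability is used only when $r \neq 0$.
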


\begin{proof}
Let $s < t$. 
If $s \neq 0$, then $s < (s t)^{1/2} < t$. 
If $s = 0$ and $t = 1$, then there exists $r$ such that $s < r < t$ (recall the assumption $\cis \neq \mathbb{B}$ that we have made all along this paper). 
If $s = 0$ and $t < 1$, then $s < t^2 < t$. 
If $s = 0$, $t \neq 1$, and $t \not< 1$, then $s < t \wedge t^{-1} < t$. 
\end{proof}

The following result extends Rump \cite[Proposition~4]{Rump15}. 
We use the conventions $0^0 = 1$ in $\cis$ and $p_{-1} = 0$ for every $p \in \cis[X]$. 

\begin{theorem}\label{thm:radalg}
Every radicable idempotent semifield $\cis$ is order-dense and algebraically closed. 
Moreover, 
\begin{equation}\label{eq:pjbar2}
\overline{p}_j = \Plus_{k < j \leqslant \ell} (p_k^{\ell-j} p_{\ell}^{j-k})^{1/(\ell-k)}, 
\end{equation}
for all $p \in \cis[X]$ and $j \in \mathbb{N}$, where the indices $k, \ell$ run over $\mathbb{N} \cup \{ -1 \}$. 
\end{theorem}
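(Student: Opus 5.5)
Order-density is Lemma~\ref{lem:orderdense}. By Corollary~\ref{coro:second}, algebraic closedness will follow once every polynomial $p$ is shown to be closable, i.e.\ once each set $A(p,j)=\{\widehat{p}(x)x^{-j}: x\in\cis^*\}$ has an inf. I will show directly that this inf is the element $q_j$ given by the right-hand side of \eqref{eq:pjbar2}. By Lemma~\ref{lem:0n}, only the range $v<j<n$ needs work, where $v=\val p$ and $n=\deg p$. In that range, the terms with $k=-1$ or $p_k=0$ or $p_\ell=0$ vanish (using $0^0=1$ and $p_{-1}=0$), so $q_j$ is a finite sum over $k<j\leqslant\ell$ with $k,\ell\in\supp p$. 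The value $q_j$ is non-zero because the pair $(v,n)$ contributes.

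First step: show $q_j$ is a lower bound of $A(p,j)$. Fix $x\in\cis^*$ and $k<j\leqslant \ell$ in $\supp p$. We have $\widehat{p}(x)x^{-j}\geqslant p_kx^{k-j}$ and $\widehat{p}(x)x^{-j}\geqslant p_\ell x^{\ell-j}$. Raising the first inequality to the power $\ell-j$ and the second to the power $j-k$, then multiplying, gives
\[
(\widehat{p}(x)x^{-j})^{\ell-k}\geqslant p_k^{\ell-j}p_\ell^{j-k},
\]
because the powers of $x$ cancel. Since radicals are monotone by Lemma~\ref{prop:perfore}, this yields $\widehat{p}(x)x^{-j}\geqslant (p_k^{\ell-j}p_\ell^{j-k})^{1/(\ell-k)}$. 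Summing over all pairs shows $q_j\leqslant \widehat{p}(x)x^{-j}$.

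Second step, which I expect to be the main obstacle: show that $q_j$ is the \emph{greatest} lower bound. In the non-totally-ordered case there need not be a single point attaining the inf. I would exhibit a point $x$ with $\widehat{p}(x)x^{-j}\leqslant q_j$, mimicking the totally ordered picture. Put
\[
x:=\Plus_{k<j,\,k\in\supp p}\ \bigwedge_{\ell\geqslant j,\,\ell\in\supp p}(p_kp_\ell^{-1})^{1/(\ell-k)}
\]
(a max of mins of slopes), or dually a min of maxes. The goal is then to check $p_ix^{i-j}\leqslant q_j$ for each $i\in\supp p$. For $i\geqslant j$, use
\[
x^{i-j}\leqslant \Bigl(\Plus_{k}(p_kp_i^{-1})^{1/(i-k)}\Bigr)^{i-j}=\Plus_k (p_kp_i^{-1})^{(i-j)/(i-k)},
\]
where the inner term comes from taking $\ell=i$ in each inf and the equality is Frobenius via Lemma~\ref{prop:sn}. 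Multiplying by $p_i$ gives exactly the terms $(p_k^{i-j}p_i^{j-k})^{1/(i-k)}\leqslant q_j$.

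For $i<j$ one needs a lower bound on $x^{i-j}$, that is, an upper bound on $x^{-1}$. Here $x\geqslant \bigwedge_{\ell}(p_ip_\ell^{-1})^{1/(\ell-i)}$ by taking the summand $k=i$. So $x^{-(j-i)}\leqslant \Plus_\ell (p_\ell p_i^{-1})^{(j-i)/(\ell-i)}$, using \eqref{eq:cisinf} and Lemma~\ref{prop:sn}. Multiplying by $p_i$ again gives $(p_i^{\ell-j}p_\ell^{j-i})^{1/(\ell-i)}\leqslant q_j$. Hence $\widehat{p}(x)x^{-j}\leqslant q_j$, so the inf exists and equals $q_j$. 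This proves closability and \eqref{eq:pjbar2}.

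The delicate points are handling the distributivity of powers and radicals over finite sups and infs (justified by Lemma~\ref{prop:sn} and radicability), and confirming that $x\neq0$. The latter holds since the $k=v$ summand is non-zero.
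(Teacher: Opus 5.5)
Your proof is correct, but it is organized differently from the paper's.

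\textbf{The paper's route.} For $v<j\leqslant n$ the paper writes $\widehat{p}(x)x^{-j}=\widehat{p^{(j)}}(x)\plus\widehat{q}(x^{-1})$ with $q(X)=p_0X^j\plus\cdots\plus p_{j-1}X$. It then uses the residual $\widehat{q}^{\#}$ from Lemmas~\ref{lem:pres} and~\ref{lem:pinv} (via \eqref{eq:psharp}), together with Frobenius, to prove a single equivalence: $y\geqslant y_j\Leftrightarrow y\geqslant\widehat{p^{(j)}}(\widehat{q}^{\#}(y)^{-1})$ for $y\in\cis^*$. Both the lower-bound property and attainment follow from it. The minimiser is $x_j=\widehat{q}^{\#}(y_j)^{-1}=\Plus_{k<j}(p_ky_j^{-1})^{1/(j-k)}$, which is defined through the value $y_j$ itself.

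\textbf{Your route.} You prove the two inequalities separately.
\begin{itemize}
\item The lower bound comes from a two-point weighted geometric-mean estimate for each pair $k<j\leqslant\ell$, combined with Lemma~\ref{prop:perfore}.
\item The reverse inequality comes from the explicit point $x=\Plus_{k}\bigwedge_{\ell}(p_kp_\ell^{-1})^{1/(\ell-k)}$, which depends only on the coefficients, by checking each monomial $p_ix^{i-j}$.
\end{itemize}
I checked both cases. For $i\geqslant j$ you take $\ell=i$ in every inf. For $i<j$ you keep only the summand $k=i$ and invert via \eqref{eq:cisinf}. In each case Frobenius yields exactly the terms of \eqref{eq:pjbar2} indexed by $(k,i)$ and $(i,\ell)$ respectively. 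Also $x\neq 0$, since finite infs of non-zero elements are non-zero.

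\textbf{What each approach buys.} Your argument is more elementary: it needs no residuated maps and no bijectivity of polynomial functions, and it gives a closed-form minimiser. The paper's argument reuses the residuation machinery that also drives the theorems of Shpiz and Bogdanov. Its equivalence also characterizes exactly which $y\in\cis^*$ dominate $y_j$.

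\textbf{Two small inaccuracies.} Neither affects validity.
\begin{itemize}
\item With the convention $0^0=1$, the terms with $\ell=j$ and $p_k=0$ (in particular $k=-1$) do not vanish; they equal $p_j$. This is harmless, because the pair $(v,j)$ already contributes $p_j$, so your reduction of the sum to pairs in $\supp p$ is still right.
\item The statement asserts \eqref{eq:pjbar2} for every $j$. You should therefore note that its right-hand side equals $p_v$, $p_n$, or $0$ in the cases $j=v$, $j=n$, and $j\notin[v,n]$ that you delegate to Lemma~\ref{lem:0n}. This is an immediate check.
\end{itemize}
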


\begin{proof}
By Lemma~\ref{lem:orderdense}, $\cis$ is order-dense. 
Let $p \in \cis[X]$ be a polynomial of degree $n$ and valuation $v$, and let $j \in \mathbb{N}$. 
%
If $j < v$ or $j > n$, then $\overline{p}_j = 0$ by Lemma~\ref{lem:0n}, and Equation~\eqref{eq:pjbar2} follows. 
If $j = v$, then $\overline{p}_v = p_v^{}$ by Lemma~\ref{lem:0n}, and Equation~\eqref{eq:pjbar2} also holds. 
Now assume that $v < j \leqslant n$. 
For all $x \in \cis^*$, we have 
\[
\widehat{p}(x) x^{-j} = \widehat{p^{(j)}}(x) \plus \widehat{q}(x^{-1}), 
\]
where $p^{(j)}$ is the $j$th derivative of $p$ and $q$ is the polynomial defined by $q(X) := p_0 X^j \plus \ldots \plus p_{j-1} X$. 
From the assumption $v < j$, we deduce that $q \neq 0$. 
Thus, $\widehat{q}$ is a bijection by Lemma~\ref{lem:pinv}, and so is $\widehat{p^{(j)}}$. 
We show that $y_{j}$ is the inf of $A(p, j) := \{ \widehat{p}(x) x^{-j} : x \in \cis^* \}$, where $y_{j}$ is defined by 
\[
y_{j} := \Plus_{k < j \leqslant \ell} (p_k^{\ell-j} p_{\ell}^{j-k})^{1/(\ell-k)}. 
\]
Note that $y_{j} \neq 0$ since $y_{j} \geqslant (p_v^{n-j} p_{n}^{j-v})^{1/(n-v)} > 0$. 
We first observe that, for every $y \in \cis^*$, 
\begin{align*}
y \geqslant y_{j} &\Leftrightarrow y \geqslant (p_k^{\ell-j} p_{\ell}^{j-k})^{1/(\ell-k)} \quad \mbox{ for all } k < j \leqslant \ell \\
&\Leftrightarrow y \geqslant p_{\ell}^{} (p_k^{1/(j-k)} y_{}^{-1/(j-k)})^{\ell-j} \quad \mbox{ for all } k < j \leqslant \ell \\
&\Leftrightarrow y \geqslant \Plus_{k < j} p_{\ell}^{} \left(p_k^{1/(j-k)} y_{}^{-1/(j-k)}\right)^{\ell-j} \quad \mbox{ for all } \ell \geqslant j. 
\end{align*}
By the Frobenius identity of Lemma~\ref{lem:frobenius}, we obtain
\begin{align*}
y \geqslant y_{j} &\Leftrightarrow y \geqslant \Plus_{\ell \geqslant j} p_{\ell}^{} \left(\Plus_{k < j} p_k^{1/(j-k)} y_{}^{-1/(j-k)}\right)^{\ell-j} \\
&\Leftrightarrow y \geqslant \Plus_{\ell \geqslant j} p_{\ell}^{} \left(\Plus_{m > 0} (q_m y_{}^{-1})^{1/m}\right)^{\ell-j} \mbox{ with } m := j - k, 
\end{align*}
for every $y \in \cis^*$. 
Since $y \neq 0$, we have $\widehat{q}^{\#}(y) \neq 0$ by the second part of Lemma~\ref{lem:pres}, 
so we can take $x := \widehat{q}^{\#}(y)^{-1}$. 
Using Equation~\eqref{eq:psharp} applied to $\widehat{q}$, this leads to
\[
y \geqslant y_{j} \Leftrightarrow y \geqslant \Plus_{\ell \geqslant j} p_{\ell} x^{\ell-j} = \widehat{p^{(j)}}(x), 
\]
for every $y \in \cis^*$. 
We deduce that $y_j$ is a lower bound of $A(p, j)$. 
Moreover, the previous equivalence gives in particular $y_{j} \geqslant \widehat{p^{(j)}}(x_{j})$, where $x_{j} := \widehat{q}^{\#}(y_{j})^{-1}$. 
Thus, 
\[
y_{j}^{} = \widehat{p^{(j)}}(x_{j}^{}) \plus y_{j}^{} = \widehat{p^{(j)}}(x_{j}^{}) \plus \widehat{q}(x_{j}^{-1}) = \widehat{p}(x_{j}^{}) x_{j}^{-j}. 
\]
This shows the inf of $A(p, j)$ is reached at $x_j$ and that $y_{j}$ is the inf of $A(p, j)$, as required. 
\end{proof}


Here is a converse statement to the previous result in the case where the idempotent semifield is totally ordered. 

\begin{corollary}\label{coro:eqalg}
Assume that $\cis$ is totally ordered. 
Then the following conditions are equivalent: 
\begin{enumerate}
	\item\label{eqalg1} $\cis$ is order-dense algebraically closed; 
	\item\label{eqalg2} $\cis$ is order-dense and preradicable; 
	\item\label{eqalg3} $\cis$ is radicable. 
\end{enumerate}
\end{corollary}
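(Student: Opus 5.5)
The plan is to prove the cycle \eqref{eqalg3} $\Rightarrow$ \eqref{eqalg1} $\Rightarrow$ \eqref{eqalg2} $\Rightarrow$ \eqref{eqalg3}. The first two implications follow from earlier results. Theorem~\ref{thm:radalg} shows that a radicable idempotent semifield is order-dense and algebraically closed. Proposition~\ref{prop:prerad} shows that every algebraically closed idempotent semifield is preradicable, so order-density carries over from \eqref{eqalg1} to \eqref{eqalg2}. Total order is not needed for these two steps.

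The substance is \eqref{eqalg2} $\Rightarrow$ \eqref{eqalg3}. Fix $y \in \cis$ and $n \in \mathbb{N}^*$; we may assume $y \neq 0$. By preradicability, $x := \theta_n^{\flat}(y)$ is the least element with $x^n \geqslant y$. The goal is to show $x^n = y$, and we argue by contradiction, assuming $x^n > y$. The idea is to use order-density to find $s$ with $y < s < x^n$, and then to extract from $s$ an element $z < x$ with $z^n \geqslant y$, contradicting minimality. Order-density alone only interpolates between $y$ and $x^n$; it does not interpolate among $n$th powers. So the key step is an element $u$ with $1 < u$ and $u^n \leqslant x^n y^{-1}$, equivalently $u \leqslant \theta_n^{\#}(x^n y^{-1})$. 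Given such $u$, the element $z := x u^{-1}$ satisfies $z < x$ and $z^n = x^n u^{-n} \geqslant y$, which is the desired contradiction.

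To find $u$, set $w := x^n y^{-1} > 1$. It suffices to show that $\theta_n^{\#}(w) > 1$, i.e.\ that the greatest $u$ with $u^n \leqslant w$ exceeds $1$. Suppose instead that $\theta_n^{\#}(w) = 1$; because the order is total this is equivalent to $u^n > w$ for every $u > 1$. Order-density gives some $u_1$ with $1 < u_1 < w$, and then $u_1^n > w$. I would iterate to force a contradiction. Since $\theta_n$ is strictly increasing, $\theta_n^{\flat}(w)$ is the least $v$ with $v^n \geqslant w$. Every $v > 1$ satisfies $v^n > w$, and any $v \leqslant 1$ gives $v^n \leqslant 1 < w$, so $\theta_n^{\flat}(w)$ is the least element of $\{v : v > 1\}$, which must then be $>1$. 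But density between $1$ and that least element produces an element strictly between them that is still $> 1$, which is impossible. Hence $\theta_n^{\#}(w) > 1$, and we may take $u := \theta_n^{\#}(w)$.

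The main obstacle is this last step: ruling out a ``gap'' between $1$ and the $n$th roots of elements slightly above $1$. It rests on the interplay of $\theta^{\flat}$, total order (so that failing $u^n \leqslant w$ means $u^n > w$), and density just above $1$. I would double-check the case distinctions there, in particular that $\theta_n^{\flat}(w)$ exists by preradicability and that it coincides with the infimum of the elements $> 1$.
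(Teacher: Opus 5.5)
Your proof is correct, and its core is the same as the paper's. The implications \eqref{eqalg3} $\Rightarrow$ \eqref{eqalg1} $\Rightarrow$ \eqref{eqalg2} are handled exactly as in the paper.

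For \eqref{eqalg2} $\Rightarrow$ \eqref{eqalg3}, the paper works directly at $t$. Let $r_1$ be the least element with $r_1^n \geqslant t$ and $r_2$ the least with $r_2^n \geqslant t^{-1}$. Then $r_2^{-1}$ is the greatest element whose $n$th power is $\leqslant t$, and Lemma~\ref{prop:perfore} gives $r_2^{-1} \leqslant r_1$. If this inequality is strict, one use of density gives $s$ with $r_2^{-1} < s < r_1$. Totality then forces $s^n \geqslant t$ or $s^{-n} > t^{-1}$, and each case contradicts one of the two minimalities.

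Your inner step, showing $\theta_n^{\#}(w) > 1$, is exactly this argument applied at $w$ in the special case $\theta_n^{\#}(w) = 1$, where $\{v : v^n \geqslant w\} = \{v : v > 1\}$. The outer reduction (passing to $w = x^n y^{-1}$ and setting $z = x u^{-1}$) is valid but buys nothing. You could compare $\theta_n^{\#}(y)$ and $\theta_n^{\flat}(y)$ directly and finish in one step.

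Some minor points to tidy up:
\begin{itemize}
\item State that $\theta_n^{\#}(w) \geqslant 1$ because $1^n \leqslant w$; your ``suppose instead that $\theta_n^{\#}(w) = 1$'' relies on this.
\item Note that $x \neq 0$ because $y \neq 0$. This is needed to form $x u^{-1}$ and to conclude $z < x$.
\item Delete the unfinished sentence about iterating from $u_1$; it plays no role in the argument.
\end{itemize}
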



\begin{proof}
\eqref{eqalg3} $\Rightarrow$ \eqref{eqalg1} $\Rightarrow$ \eqref{eqalg2} follows from Theorem~\ref{thm:radalg} and Proposition~\ref{prop:prerad}. 

\eqref{eqalg2} $\Rightarrow$ \eqref{eqalg3}. 
Assume that $\cis$ is a totally ordered, order-dense, preradicable idempotent semifield. 
Let $t \in \cis$, $n \in \mathbb{N}^*$. 
We can suppose that $t \neq 0$ without loss of generality. 
Since $\cis$ is preradicable, there exists a least element $r_1$ such that $r_1^n \geqslant t$, and a least $r_2$ such that $r_2^n \geqslant t^{-1}$. Thus, $r_1 \geqslant r_2^{-1}$ by Lemma~\ref{prop:perfore}. 
If $r_1 = r_2^{-1}$, then this common value is the $n$-root of $t$ we are looking for. 
Now assume that $r_1 > r_2^{-1}$. 
Since $\cis$ is order-dense, there is some $s \in \cis$ such that $r_1 > s > r_2^{-1}$. Since $\cis$ is totally ordered we have $r_1^n > s^n \geqslant t$ or $r_2^n > s^{-n} \geqslant t^{-1}$; the former case contradicts the definition of $r_1$, and the latter case the definition of $r_2$. 
This shows that $\cis$ is radicable. 
\end{proof}




\begin{remark}
We have seen that $\mathbb{Z}_{\max}$, as a complete  idempotent semifield, is algebraically closed (see Corollary~\ref{coro:zmax}), hence preradicable. 
Note however that it is not order-dense (and not radicable). 
\end{remark}

\begin{corollary}
The following totally ordered idempotent semifields: 
\begin{itemize}
	\item $\mathbb{Q}_{\max} = (\mathbb{Q} \cup \{ -\infty\}, \max, -\infty, +, 0)$, 
	\item $\mathbb{A}_{\max} = (\mathbb{A} \cup \{ -\infty\}, \max, -\infty, +, 0)$, where $\mathbb{A}$ is the set of real algebraic numbers, 
\end{itemize}
are algebraically closed. 
\end{corollary}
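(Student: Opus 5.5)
The plan is to apply Theorem~\ref{thm:radalg}: it suffices to show that $\mathbb{Q}_{\max}$ and $\mathbb{A}_{\max}$ are radicable idempotent semifields. Theorem~\ref{thm:radalg} then gives algebraic closedness directly, together with order-density. Alternatively, since both are totally ordered, one could invoke Corollary~\ref{coro:eqalg}, implication \eqref{eqalg3} $\Rightarrow$ \eqref{eqalg1}. Either route reduces the statement to checking radicability.

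First, I would check that both structures are commutative, infinite, totally ordered idempotent semifields. In each, $\max$ is the addition with neutral element $-\infty$. The usual $+$ is the multiplication with identity $0$, and $-\infty$ is absorbing. Every element $x \neq -\infty$ has the inverse $-x$, and this stays in the set because $\mathbb{Q}$ and $\mathbb{A}$ are additive groups. For $\mathbb{A}$ this holds because the real algebraic numbers form a subfield of $\mathbb{R}$. Idempotency and the total order are inherited from $\mathbb{R}$.

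Next, I would verify radicability. In the max-plus notation, the equation $x^n = y$ reads $n x = y$. For $y = -\infty$ take $x = -\infty$. For $y \in \mathbb{Q}$ take $x = y/n \in \mathbb{Q}$. For $y \in \mathbb{A}$, $x = y/n$ is again real algebraic, since $\mathbb{A}$ is a field containing $\mathbb{Q}$. Hence both semifields are radicable, and Theorem~\ref{thm:radalg} concludes.

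No step is a real obstacle. The only point needing care is remembering that multiplicative roots in max-plus notation are divisions by $n$, so only closure of $\mathbb{Q}$ and $\mathbb{A}$ under division by positive integers is needed.
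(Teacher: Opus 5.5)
Your proposal is correct and follows the same approach as the paper, which places this corollary directly after Theorem~\ref{thm:radalg} and Corollary~\ref{coro:eqalg} as an immediate consequence. The key step is yours too: both semifields are radicable, because $x^n = y$ reads $nx = y$ and $\mathbb{Q}$, $\mathbb{A}$ are closed under division by positive integers, so algebraic closedness follows.
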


\begin{remark}
Grigg proved that $\mathbb{Q}_{\max}$ is algebraically closed (see \cite[Theorem~5]{Grigg07}). 
\end{remark}



%

\subsection{Radicability and rational polynomials}

Let $p, q$ be polynomials in $\cis[X]$. 
We say that $p$ and $q$ are \textit{$\cis$-congruent} if there exists some non-zero polynomial $r \in \cis[X]$ such that $r(X) p(X) = r(X) q(X)$. 
This is an equivalence relation, and the quotient set $\cis\{X\}$ can be equipped with the structure of a $\times$-cancellative idempotent semiring. 
The equivalent classes with respect to $\cis$-congruence, i.e.\ the elements of $\cis\{X\}$, are the \textit{rational polynomials} over $\cis$, as per the terminology coined by Castella \cite{Castella13}. 

\begin{remark}
Rump \cite{Rump15} used a different construction. 
He first considered $\cis(X)$, the idempotent semifield of \textit{rational fractions} over $\cis$, defined as the Grothendieck  group (which is a lattice-ordered group) obtained from the monoid $(\cis[X] \setminus \{ 0 \}, \times, 1)$, with a zero adjoined. 
Then $\cis\{X\}$ was itself defined as the image of $\cis[X]$ by the natural morphism $\cis[X] \to \cis(X)$. 
\end{remark}

\begin{lemma}\label{lem:sum}
Let $p, q  \in \cis[X]$. 
Then $(p(X) \plus q(X))^m$ and $p(X)^m \plus q(X)^m$ are $\cis$-congruent, for all $m \in \mathbb{N}$. 
\end{lemma}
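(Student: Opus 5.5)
We need a nonzero $r$ with $r\,(p+q)^m = r\,(p^m+q^m)$ in $\cis[X]$. The difference between the two sides comes from the cross terms in the binomial identity (Lemma~\ref{lem:bi}):
\[
(p \plus q)^m = \Plus_{k=0}^m p^k q^{m-k} = p^m \plus q^m \plus \Plus_{0<k<m} p^k q^{m-k}.
\]
Since $p^m + q^m \leqslant (p+q)^m$ always, the plan is to find a nonzero $r$ such that $r\, p^k q^{m-k} \leqslant r\,(p^m \plus q^m)$ for every $0<k<m$. The cases $m\leqslant 1$ are trivial, and if $p=0$ or $q=0$ the two sides already coincide, so we may assume $p,q\neq 0$ and $m \geqslant 2$.

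The natural candidate is $r := (p \plus q)^{m-1}$, or a power of $p+q$, modelled on the congruence $(X+1)(X^2+1)=(X+1)(X^2+X+1)$. A cleaner route is to reduce to one factorisation. First show that for a single cross term, $p^k q^{m-k}$ lies below a \emph{closed} combination: the polynomial $s := p^m \plus q^m$ has the same polynomial function as $(p\plus q)^m$, by the Frobenius identity applied to functions (Lemma~\ref{lem:frobenius} in the $\times$-cancellative semiring $\cis$). The strategy is then to show that multiplying by a suitable $r$ makes the products closed, so that equality of functions upgrades to equality of polynomials.

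Concretely, for polynomials $f,g$ with $\widehat f=\widehat g$, we look for $r\neq 0$ with $rf=rg$. The key step is the homogeneous two-variable case. Work with $p^m+q^m$ versus $(p+q)^m$, and take
\[
r := \prod_{0<k<m} \bigl(p^{k} \plus q^{k}\bigr)^{?}
\]
or, more simply, $r := (p \plus q)^{m(m-1)}$. The inequality to establish is
\[
(p\plus q)^{N} p^k q^{m-k} \leqslant (p \plus q)^{N}(p^m \plus q^m)
\]
for $N$ large. Expanding the left side via Lemma~\ref{lem:bi} gives terms $p^{a+k}q^{N-a+m-k}$, and each must be dominated by some $p^{b+m}q^{N-b}$ or $p^{b}q^{N-b+m}$. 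Since $0\le a\le N$ and $0<k<m$, the exponent pair $(a+k,\,N+m-a-k)$ lies in the range covered by $b+m$ with $b\in[0,N]$ (if $a+k\geqslant m$) or by $b$ with $b\in[0,N]$ (if $a+k\leqslant N$). One of these holds whenever $N\geqslant m$, and then the term coincides exactly with a monomial on the right. So $N=m$, that is $r=(p+q)^m$, already works, and this is a purely formal exponent count.

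Thus $r(p+q)^m \leqslant r(p^m+q^m)$, and the reverse inequality is immediate from $p^m + q^m \leqslant (p+q)^m$. It remains to note $r\neq 0$, which holds because $\cis[X]$ has no zero divisors and $p+q\neq 0$. The only real obstacle is the exponent bookkeeping in the covering argument; I expect it to go through with $N=m$, and otherwise any $N\geqslant m$ suffices.
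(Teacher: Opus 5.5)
Your proof is correct, and it takes a more explicit route than the paper. The paper's proof is one line. Because $\cis\{X\}$ is a $\times$-cancellative commutative idempotent semiring, the Frobenius identity (Lemma~\ref{lem:frobenius}, cited from Connes--Consani) holds there. So $(p+q)^m$ and $p^m+q^m$ coincide as rational polynomials.

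You work directly in $\cis[X]$ and exhibit the witness. By Lemma~\ref{lem:bi}, $(p+q)^N(p^m+q^m)$ is the sum of the monomials $p^c q^{N+m-c}$ with $c \in [0,N] \cup [m,N+m]$. This covers all of $[0,N+m]$ once $N \geqslant m-1$, so $(p+q)^N(p^m+q^m) = (p+q)^{N+m} = (p+q)^N (p+q)^m$. Your covering count is right; in fact $N = m-1$ already suffices, not just $N \geqslant m$. The multiplier is non-zero because $\cis[X]$ has no zero divisors.

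The identity $(p+q)^{m-1}(p^m+q^m) = (p+q)^{2m-1}$ is exactly the step in the usual proof of the Frobenius identity before one cancels $(p+q)^{m-1}$. So your argument is the paper's proof unwound. The paper's version is shorter, but it rests on the structure of the quotient $\cis\{X\}$ and on an external citation. Only the compatibility of $\cis$-congruence with the operations and the cancellativity of the quotient come without proof. Yours is self-contained, uses only Lemma~\ref{lem:bi} and the absence of zero divisors in $\cis[X]$, and gives the explicit multiplier $(p+q)^{m-1}$.

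In a final write-up, drop the abandoned detours. The strategy via closedness and equality of polynomial functions plays no role. The candidate $\prod_{0<k<m}(p^k+q^k)^{?}$ is not a well-formed choice.
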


\begin{proof}
Lemma~\ref{lem:frobenius} applies since $\cis\{X\}$ is $\times$-cancellative. 
\end{proof}

\begin{lemma}\label{lem:equiv}
Let $p, q  \in \cis[X]$. 
If $p(X)^m$ and $q(X)^m$ are $\cis$-congruent for some $m \in \mathbb{N}^* 	$, 
then $p(X)$ and $q(X)$ are $\cis$-congruent. 
\end{lemma}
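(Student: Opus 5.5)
The plan is to work in the quotient semiring $\cis\{X\}$, which the paper notes is a $\times$-cancellative, commutative idempotent semiring. Write $[p]$ for the class of $p$. The hypothesis says $[p]^m = [q]^m$ in $\cis\{X\}$. We then want to apply Lemma~\ref{prop:perfore} there: if $x^m \leqslant y^m$ for some $m \in \mathbb{N}^*$, then $x \leqslant y$. From $[p]^m \leqslant [q]^m$ and $[q]^m \leqslant [p]^m$ we get $[p] \leqslant [q]$ and $[q] \leqslant [p]$. Antisymmetry of the natural order of the idempotent semiring $\cis\{X\}$ then gives $[p] = [q]$, that is, $p$ and $q$ are $\cis$-congruent.

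The key step is therefore to make sure the structure of $\cis\{X\}$ is as claimed. We must check that the operations induced from $\cis[X]$ are well defined on classes. This holds because $rp = rq$ and $sp' = sq'$ imply $rs(p+p') = rs(q+q')$ and $rs\,pp' = rs\,qq'$, and $rs \neq 0$ since $\cis[X]$ has no zero divisors. The quotient is then an idempotent commutative semiring, so it carries the order $x \leqslant y \Leftrightarrow x + y = y$.

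For $\times$-cancellativity, suppose $[r][p] = [r][q]$ with $[r] \neq 0$. Then $s\,r p = s\,r q$ for some non-zero $s$. Since $sr \neq 0$, this shows $[p] = [q]$. We also need $[r] \neq 0$ to force $r \neq 0$, which is immediate because the class of $0$ contains only $0$.

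The main point to watch is that Lemma~\ref{prop:perfore} is stated for a general $\times$-cancellative commutative idempotent semiring, with no semifield structure. That is exactly the setting of $\cis\{X\}$, so no extra work is needed. If one prefers a direct argument, the route is the Frobenius identity (Lemma~\ref{lem:sum}): from $[p]^m = [q]^m$ we get $([p]+[q])^m = [p]^m + [q]^m = [q]^m$. Then $([p]+[q])^m = [q]^m$, and this forces $[p]+[q] = [q]$ by the standard cancellation argument behind Lemma~\ref{prop:perfore}. Symmetrically $[p]+[q] = [p]$, which yields the result.
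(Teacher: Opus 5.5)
Your proposal is correct and follows the same route as the paper, which simply notes that Lemma~\ref{prop:perfore} applies in the $\times$-cancellative semiring $\cis\{X\}$. Your checks that the operations on congruence classes are well defined and that $\cis\{X\}$ is $\times$-cancellative, using that $\cis[X]$ has no zero divisors, fill in exactly what the paper takes for granted.
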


\begin{proof}
Lemma~\ref{prop:perfore} applies since $\cis\{X\}$ is $\times$-cancellative. 
\end{proof}

\begin{lemma}\label{lem:congrclosed}
Assume that $\cis$ is radicable. 
Then $p(X)$ and $\overline{p}(X)$ are $\cis$-congruent, for all $p \in \cis[X]$. 
\end{lemma}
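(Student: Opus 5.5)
We have to show that, for radicable $\cis$, there is a non-zero $r \in \cis[X]$ with $r\,p = r\,\overline{p}$. The case $\deg p \leqslant 0$ is trivial, since then $\overline{p}=p$. So assume $n = \deg p > 0$. By Theorem~\ref{thm:radalg}, $\cis$ is algebraically closed, so $p$ is closable and $\overline{p}$ is defined. Since $\cis\{X\}$ is $\times$-cancellative, it suffices to prove that the classes of $p$ and $\overline{p}$ coincide in $\cis\{X\}$.

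The strategy is to show that each coefficient $\overline{p}_j X^j$ is bounded, in $\cis\{X\}$, by the class of $p$. The key tools are Lemma~\ref{lem:sum} (Frobenius in $\cis\{X\}$) and Lemma~\ref{lem:equiv} (extraction of $m$-th roots in $\cis\{X\}$). Because $p \leqslant \overline{p}$ by Lemma~\ref{lem:pbar}, it is enough to show $p + \overline{p}$ is $\cis$-congruent to $p$, and for that it suffices to show, for each $j$, that $p + \overline{p}_j X^j$ is $\cis$-congruent to $p$. Summing these congruences term by term is legitimate, because congruence is compatible with $+$ in the quotient semiring.

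Fix $j$ with $v < j < n$; the other indices are handled by Lemma~\ref{lem:0n}. Formula \eqref{eq:pjbar2} gives $\overline{p}_j$ as a finite sum of terms $t_{k\ell} = (p_k^{\ell-j}p_\ell^{j-k})^{1/(\ell-k)}$ with $k<j\leqslant \ell$. It therefore suffices to treat one monomial $t_{k\ell}X^j$ with $k<j<\ell$; the case $\ell = j$ is trivial. Set $m=\ell-k$. Then
\[
(t_{k\ell}X^j)^m = p_k^{\ell-j}p_\ell^{j-k}X^{(\ell-j)k+(j-k)\ell} = (p_kX^k)^{\ell-j}(p_\ell X^\ell)^{j-k}.
\]
By the binomial identity (Lemma~\ref{lem:bi}), this is a monomial occurring in $(p_kX^k+p_\ell X^\ell)^m$. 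Hence
\[
(p_kX^k+p_\ell X^\ell)^m + (t_{k\ell}X^j)^m = (p_kX^k+p_\ell X^\ell)^m
\]
holds exactly in $\cis[X]$. By Lemma~\ref{lem:sum}, the left-hand side is $\cis$-congruent to $(p_kX^k+p_\ell X^\ell+t_{k\ell}X^j)^m$. Lemma~\ref{lem:equiv} then yields that $p_kX^k+p_\ell X^\ell+t_{k\ell}X^j$ is $\cis$-congruent to $p_kX^k+p_\ell X^\ell$. Adding the remaining terms of $p$ to both sides, which is compatible with congruence, we get that $p + t_{k\ell}X^j$ is $\cis$-congruent to $p$.

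Summing over all pairs $(k,\ell)$ and all $j$ gives that $p+\overline{p}=\overline{p}$ is $\cis$-congruent to $p$. The main point requiring care is verifying that $\cis$-congruence is a semiring congruence, i.e. compatible with addition. This holds because if $rp=rq$ and $r'p'=r'q'$, then $rr'(p+p') = rr'(q+q')$, and $rr' \neq 0$ since $\cis[X]$ has no zero divisors. A second point is that radicability is needed so that the $t_{k\ell}$ exist in $\cis$.
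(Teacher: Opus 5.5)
Your proof is correct and uses the same ingredients as the paper's: formula \eqref{eq:pjbar2}, the identity $(t_{k\ell}X^j)^{\ell-k}=(p_kX^k)^{\ell-j}(p_\ell X^\ell)^{j-k}$, and Lemmas~\ref{lem:sum} and~\ref{lem:equiv}. The only difference is bookkeeping. The paper raises everything to the single exponent $m=n!$, bounds $\sum_j(\overline{p}_jX^j)^m\leqslant p(X)^m$ directly in $\cis[X]$, and applies Frobenius and root extraction once. You instead treat each monomial $t_{k\ell}X^j$ with its own exponent $\ell-k$ and then glue the resulting congruences using their compatibility with addition, which you verify correctly.
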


\begin{proof}
Recall that $\overline{p}(X)$ exists by Theorem~\ref{thm:radalg} and satisfies  
\[
\overline{p}_j = \Plus_{k < j \leqslant \ell} (p_k^{\ell-j} p_{\ell}^{j-k})^{1/(\ell-k)}, 
\]
for all $j \in \mathbb{N}$. 
Take $n := \deg p$ and $m := n!$. 
Then 
\begin{align*}
(\overline{p}_j X^{j})^{m} &= \Plus_{k < j \leqslant \ell} (p_k X^k)^{(\ell-j) m / (\ell-k)} (p_{\ell} X^{\ell})^{(j-k) m / (\ell-k)} \\
&\leqslant \Plus_{k < j \leqslant \ell} p(X)^{(\ell-j) m / (\ell-k)} p(X)^{(j-k) m / (\ell-k)} \\
&\leqslant p(X)^{m}. 
\end{align*}
This shows that $\Plus_{j=0}^n (\overline{p}_j X^{j})^{m} \leqslant p(X)^{m}$, so by Lemma~\ref{lem:sum} $p(X)^{m} = p(X)^{m} \plus \Plus_{j=0}^n (\overline{p}_j X^{j})^{m}$ is $\cis$-congruent to 
\[
(p(X) \plus \Plus_{j=0}^n \overline{p}_j X^{j})^m = (p(X) \plus \overline{p}(X))^m = \overline{p}(X)^m. 
\]
By Lemma~\ref{lem:equiv}, $p(X)$ and $\overline{p}(X)$ are $\cis$-congruent. 
\end{proof}

The following result generalizes Castella \cite[Theorem~5.5]{Castella13} to the case of idempotent semifields that are not necessarily totally ordered. 

\begin{proposition}\label{prop:corresp}
In a radicable idempotent semifield, there is an isomorphism of semirings between rational polynomials and polynomial functions. 
\end{proposition}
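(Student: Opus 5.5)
The plan is to use the map $\Phi : \cis[X] \to \cis^{\cis}$, $p \mapsto \widehat{p}$. It is a semiring morphism onto the semiring of polynomial functions (\cite[Lemma~3.35]{Baccelli92}). We show that its kernel congruence is exactly $\cis$-congruence, so that $\Phi$ factors through a bijective semiring morphism $\cis\{X\} \to \{\widehat{p} : p \in \cis[X]\}$. Because $\cis$-congruence is a semiring congruence (which is what gives $\cis\{X\}$ its semiring structure), the induced map is automatically a morphism. Hence only two points remain: the map is well defined, and it is injective.

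Well-definedness comes first. Suppose $rp = rq$ with $r \neq 0$. Then $\widehat{r}(x)\widehat{p}(x) = \widehat{r}(x)\widehat{q}(x)$ for all $x$. Whenever $\widehat{r}(x) \neq 0$ we may cancel in the semifield and get $\widehat{p}(x) = \widehat{q}(x)$. Now $\widehat{r}(x) = 0$ can only happen at $x = 0$, and only when $r_0 = 0$: indeed, for $x \neq 0$ and $r_k \neq 0$ we have $r_k x^k \neq 0$. To treat $x=0$ in that case, we first reduce by writing $r = X^{v} r^{(v)}$ with $v = \val r$. Comparing the lowest coefficients of $X^v r^{(v)} p$ and $X^v r^{(v)} q$ is enough: the coefficient of $X^v$ gives $r_v p_0 = r_v q_0$, hence $p_0 = q_0$, that is, $\widehat{p}(0) = \widehat{q}(0)$. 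Alternatively, $\widehat{p}(0) = p_0$ and $\widehat{p}$ preserves infima, so by Proposition~\ref{prop:infcom} continuity at $0$ also works. The coefficient argument is simpler, and this is the route we take.

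Injectivity is the heart of the proof, and this is where radicability enters. Suppose $\widehat{p} = \widehat{q}$. By Theorem~\ref{thm:radalg} every polynomial is closable. Lemma~\ref{lem:pbar}\eqref{lem:pbar4} then gives $\overline{p} = \overline{q}$, because the closure depends only on the polynomial function: its coefficients are infima of $\widehat{p}(x)x^{-j}$. By Lemma~\ref{lem:congrclosed}, $p$ is $\cis$-congruent to $\overline{p}$, and $q$ is $\cis$-congruent to $\overline{q}$. Since $\cis$-congruence is transitive (an equivalence relation, as noted), $p$ and $q$ are $\cis$-congruent.

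Surjectivity onto polynomial functions is immediate. Assembling the pieces yields the semiring isomorphism $\cis\{X\} \cong \{\widehat{p}\}$. The main obstacle, namely showing that equal functions give congruent polynomials, has already been handled by Lemma~\ref{lem:congrclosed}. The only remaining care is the point $x=0$ in well-definedness, which the coefficient comparison above disposes of. Finally, as a byproduct, every rational polynomial contains the closed representative $\overline{p}$, which splits by Theorem~\ref{thm:ft1}.
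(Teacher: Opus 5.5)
Your proof is correct and is essentially the paper's. For the first direction you strip $X^{\val r}$ from $r$ (your comparison of the coefficient of $X^{v}$ is the same reduction). For the second, equal functions force $\overline{p}=\overline{q}$, and congruence then follows from Lemma~\ref{lem:congrclosed} and transitivity, which is exactly what the paper does with $t=rs$.

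One small caveat on the alternative you set aside: Proposition~\ref{prop:infcom} does not cover $A=\cis^*$ when $p_0\neq 0$, because that infimum is $0$ and the set is infinite. The identity $\bigwedge_{x\in\cis^*}\widehat{p}(x)=p_0$ you would need there is Lemma~\ref{lem:0n}\eqref{lem:0n0}.
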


\begin{proof}
Let $\cis$ be radicable, and let $p, q \in \cis[X]$. 
If $r(X) p(X) = r(X) q(X)$ for some non-zero polynomial $r \in \cis[X]$, we can suppose without loss of generality that $\widehat{r}(0) \neq 0$ by dividing by $X^{\val r}$. 
We get $\widehat{r}(x) \widehat{p}(x) = \widehat{r}(x) \widehat{q}(x)$ and $\widehat{r}(x) \neq 0$ for all $x \in \cis$, so that $\widehat{p}(x) = \widehat{q}(x)$ for all $x \in \cis$. 

Conversely, assume that $\widehat{p}(x) = \widehat{q}(x)$ for all $x \in \cis$. 
By the previous lemma there are some non-zero polynomials $r, s \in  \cis[X]$ such that $r(X) p(X) = r(X) \overline{p}(X)$ and $s(X) q(X) = s(X) \overline{q}(X)$. 
Since $\widehat{p}(x) = \widehat{q}(x)$ for all $x \in \cis$, we have $\overline{p}(X) = \overline{q}(X)$, so we obtain $t(X) p(X) = t(X) q(X)$ with $t(X) := r(X) s(X)$, which is non-zero. 
It follows that $p(X)$ and $q(X)$ are $\cis$-congruent. 

It is then licit to consider the map $\Phi$ that associates to a polynomial function $\widehat{p}$ with $p \in \cis[X]$ the rational polynomial 
\[
\left\{ q \in \cis[X] : q(X) \mbox{ $\cis$-congruent to } p(X) \right\} \in \cis\{X\}. 
\]
The first part of the proof shows that $\Phi$ is injective. 
Moreover, $\Phi$ is obviously a surjective morphism of semirings, so it is the isomorphism we were aiming at. 
\end{proof}

The following result extends Rump \cite[Corollary~2]{Rump15}. 
It shows that being radicable for an idempotent semifield is equivalent to being ``algebraically closed'' for rational polynomials over it.  

\begin{theorem}\label{thm:rat}
The following conditions are equivalent:
\begin{enumerate}
  \item\label{thm:rat1} $\cis$ is radicable;
  \item\label{thm:rat2} every rational polynomial in $\cis\{X\}$ splits in $\cis$;
  \item\label{thm:rat3} every polynomial in $\cis[X]$ is $\cis$-congruent to a closed polynomial. 
\end{enumerate}
\end{theorem}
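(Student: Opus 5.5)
We prove \eqref{thm:rat1} $\Rightarrow$ \eqref{thm:rat3} $\Rightarrow$ \eqref{thm:rat2} $\Rightarrow$ \eqref{thm:rat1}. We read ``a rational polynomial splits in $\cis$'' as: it has a representative of the form $p_n(X \plus r_1)\cdots(X \plus r_n)$ with $r_i \in \cis$, or equivalently its class equals the class of such a product.

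For \eqref{thm:rat1} $\Rightarrow$ \eqref{thm:rat3}, let $p \in \cis[X]$. By Theorem~\ref{thm:radalg}, $\cis$ is algebraically closed, so every polynomial is closable by Corollary~\ref{coro:second}. Hence $\overline{p}$ exists and is closed. Lemma~\ref{lem:congrclosed} says precisely that $p$ and $\overline{p}$ are $\cis$-congruent. The polynomials of degree $\leqslant 0$ are trivially closed, so this case needs no separate treatment.

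For \eqref{thm:rat3} $\Rightarrow$ \eqref{thm:rat2}, take a rational polynomial and a representative $p$ of degree $n>0$. By \eqref{thm:rat3}, $p$ is $\cis$-congruent to a closed polynomial $q$. By the first fundamental theorem (Theorem~\ref{thm:ft1}), $q$ splits, so the class of $p$ splits. One point to check is that the split representative should be compatible with degree, namely that $\deg q = \deg p$. Any class with a representative of positive degree has a split representative, which suffices. If needed, $\deg$ is in fact a congruence invariant: from $rp = rq$ we get $\deg r + \deg p = \deg r + \deg q$, since $\cis[X]$ has no zero divisors and leading coefficients multiply.

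For \eqref{thm:rat2} $\Rightarrow$ \eqref{thm:rat1}, which is the main step, fix $t \in \cis^*$ and $n \in \mathbb{N}^*$, and consider $p = X^n \plus t$. By \eqref{thm:rat2} there is a nonzero $r$ with
\[
r(X)(X^n \plus t) = r(X)(X \plus c_1)\cdots(X \plus c_n).
\]
Comparing leading coefficients gives leading coefficient $1$, and by Lemma~\ref{lem:split} we may take $c_1 \geqslant \cdots \geqslant c_n$. The plan is to pass to the $\times$-cancellative semiring $\cis\{X\}$ and exploit the Frobenius identity there. We have $(X \plus c_1)^n \geqslant \prod_i (X \plus c_i) \geqslant (X \plus c_n)^n$ in $\cis[X]$, and these inequalities persist after multiplying by $r$. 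By Lemma~\ref{lem:sum}, $(X \plus c)^n$ is congruent to $X^n \plus c^n$. This yields
\[
r\cdot(X^n \plus c_n^n) \leqslant r\cdot(X^n \plus t) \leqslant r\cdot(X^n \plus c_1^n)
\]
after a common further nonzero factor. The intended conclusion is $c_n^n \leqslant t \leqslant c_1^n$, and then we must pin down an exact root.

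The cleaner route is to evaluate at $X=0$ (constant coefficients), which gives $r_v t = r_v c_1\cdots c_n$ with $v = \val r$. After dividing $r$ by $X^{v}$, this is $t = c_1\cdots c_n$. It remains to show all $c_i$ are equal. Next compare coefficients of $X^{n-1}$ suitably, or better, compare in $\cis\{X\}$ the elements $\prod(X\plus c_i)$ and $X^n \plus t$. The $n$-th power of the first is congruent to $\prod_i (X^n \plus c_i^n)$, which contains the term $c_1^{n(n-1)} X^{n}$ in its expansion. The $n$-th power of the second is $X^{n^2} \plus t^n$, which has zero coefficient at $X^n$. Cancellativity in $\cis\{X\}$ does not directly compare individual coefficients, however. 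I would instead use that polynomial functions on an extension respect congruence, as in the first paragraph of the proof of Proposition~\ref{prop:corresp}, which works with $\widehat{r}(0)\ne0$. Thus $x^n \plus t = \prod(x \plus c_i)$ for all $x$, and Proposition~\ref{prop:prerad}'s argument then gives that $c_1$ is least with $c_1^n \geqslant t$. Evaluating at $x = c_n$ gives $c_n^n \plus t = c_n^{n}\prod_{i<n}(c_i c_n^{-1})$, i.e.\ $c_n^n \plus t = c_1\cdots c_n = t$, so $c_n^n \leqslant t$. Combining $c_1\cdots c_n = t$ with the fact that $c_1$ is least with $c_1^n\geqslant t$, and exploiting $c_i \geqslant c_n$ with $c_n^n \leqslant t$, we aim to force $c_1 = c_n$, hence $c_1^n = t$. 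This last squeeze, getting $c_1 = \cdots = c_n$ in the non-totally-ordered setting, is where I expect the real difficulty. If needed, I would instead evaluate at $x = (c_1\cdots c_n)$-type geometric means inside the radicable closure (Theorem~\ref{thm:radclo}) to conclude.
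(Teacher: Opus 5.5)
The gap is in the last step of (2)$\Rightarrow$(1). Your steps (1)$\Rightarrow$(3) and (3)$\Rightarrow$(2) are fine; they are essentially the paper's use of Lemma~\ref{lem:congrclosed} and Theorem~\ref{thm:ft1}. Your reduction of (2)$\Rightarrow$(1) to an identity $x^n\plus t=(x\plus c_1)\cdots(x\plus c_n)$ with $c_1\geqslant\cdots\geqslant c_n$ is also sound.

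Everything you extract from that identity concerns $x\in\cis$ only: $c_1\cdots c_n=t$, $c_n^n\leqslant t$, and $c_1$ is least among $s\in\cis$ with $s^n\geqslant t$. No squeeze can produce $c_1=\cdots=c_n$ from these facts. Take $K_\alpha=\{\alpha^k:k\in\mathbb{Z}\}\cup\{0\}$ with $\vee=\max$ and $\alpha>1$, and set $n=2$, $t=\alpha$, $c_1=\alpha$, $c_2=1$. Then:
\begin{itemize}
\item $x^2\vee\alpha=(x\vee\alpha)(x\vee 1)$ for every $x\in K_\alpha$;
\item $c_1c_2=t$ and $c_2^2\leqslant t$;
\item $\alpha$ is the least $s\in K_\alpha$ with $s^2\geqslant\alpha$.
\end{itemize}
Yet $c_1\neq c_2$, and $\alpha$ has no square root in $K_\alpha$. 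The congruence hypothesis says strictly more than equality of polynomial functions on $\cis$. You even observe that congruence transfers to functions on an extension, but you then run the minimality argument only over $\cis$, so the extra information is never used.

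The missing idea is to apply the argument of Proposition~\ref{prop:prerad} over the radicable closure $\widetilde{\cis}$ of Theorem~\ref{thm:radclo}.
\begin{enumerate}
\item Write $r=X^{\val r}r'$ with $r'_0\neq 0$. The identity $r'(X)(X^n\plus t)=r'(X)\prod_i(X\plus c_i)$ holds in $\widetilde{\cis}[X]$.
\item For every $x\in\widetilde{\cis}$ we have $\widehat{r'}(x)\geqslant r'_0\neq 0$, so it can be cancelled. Hence $x^n\plus t=\prod_i(x\plus c_i)$ for all $x\in\widetilde{\cis}$.
\item The proof of Proposition~\ref{prop:prerad}, with $s$ ranging over $\widetilde{\cis}$, shows that $c_1$ is the least $s\in\widetilde{\cis}$ with $s^n\geqslant t$.
\item By Lemma~\ref{prop:perfore}, $t^{1/n}\in\widetilde{\cis}$ is also the least such element.
\item Hence $c_1=t^{1/n}$, i.e.\ $t=c_1^n$ with $c_1\in\cis$.
\end{enumerate}
You never need $c_1=\cdots=c_n$, although it follows afterwards. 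This is exactly the paper's argument for (3)$\Rightarrow$(1).

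Your closing remark about evaluating at geometric means inside $\widetilde{\cis}$ points the right way. Evaluating at $x=t^{1/n}$ gives $c_1\leqslant t^{1/n}$, and evaluating at $x=c_1$ gives $t\leqslant c_1^n$, which together conclude. As written, however, this is an unexecuted fallback, and the route you actually pursue cannot succeed.
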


\begin{proof}
\eqref{thm:rat1} $\Rightarrow$ \eqref{thm:rat3} is Lemma~\ref{lem:congrclosed}. 

\eqref{thm:rat2} $\Leftrightarrow$ \eqref{thm:rat3} is a consequence of Theorem~\ref{thm:ft1}. 

\eqref{thm:rat3} $\Rightarrow$ \eqref{thm:rat1}. 
Assume that every $p \in \cis[X]$ is $\cis$-congruent to a closed polynomial. 
Let $n \in \mathbb{N}^*$, $t \in \cis^*$, and take $p(X) := X^n \plus t$. 
Then $p(X)$ is $\cis$-congruent to a polynomial of the form $(X \plus c_1) \cdots (X \plus c_n)$ with $c_1, \ldots, c_n \in \cis$ and $c_1 \geqslant \cdots \geqslant c_n$. 
Considering the radicable closure $\widetilde{\cis}$ of $\cis$ (see Theorem~\ref{thm:radclo} in the Appendix), we have in particular 
\[
\widehat{p}(x) = (x \plus c_1) \cdots (x \plus c_n), 
\]
for all $x \in \widetilde{\cis}$. 
Now, from the proof of Proposition~\ref{prop:prerad}, we know that $c_1$ is the least $s \in \widetilde{\cis}$ such that $t \leqslant s^{n}$. 
But in $\widetilde{\cis}$, $t^{1/n}$ has the same property. 
Thus, $c_1 = t^{1/n}$, so that $t = c_1^{n}$. 
This shows that $\cis$ is radicable. 
\end{proof}

\section{Conclusion and perspectives}

As a next step, we shall investigate generalizations to polynomials with coefficients in a \textit{strict semifield} (a semifield that is not a field). 
We also aim to study further \textit{extensions} of (idempotent) semifields, after Castella \cite[Section~5]{Castella13}, or Tolliver \cite{Tolliver14} who examined finite extensions of $\mathbb{Z}_{\max}$. 
An additional perspective is to clarify the various notions of \textit{algebraic elements} that can be considered and the links between them. 


\begin{acknowledgements*}
I would like to thank Prof.\ Dominique Castella for the numerous exchanges we had and for his insightful remarks. 
\end{acknowledgements*}

\appendix

\section{Radicable closure of idempotent semifields}\label{sec:radclo}

We show that every idempotent semifield $\cis$ can be embedded into a radicable idempotent semifield $\widetilde{\cis}$, which we call the \textit{radicable closure} of $\cis$. 
To this aim, we define on $\mathbb{N}^* \times \cis$ the following relation: 
\[
(n, x) \sim (m, y) \Longleftrightarrow x^{m} = y^{n}. 
\]
This is an equivalence relation; transitivity is a consequence of the implication $(\exists n \in \mathbb{N}^* : x^n = y^n) \Rightarrow (x = y)$, see Lemma~\ref{prop:perfore}. 
The equivalence class of $(n, x)$ is written $(n, x)^{\sim}$, and we denote by $\widetilde{\cis}$ the associated quotient set. 
It is possible to endow $\widetilde{\cis}$ with the structure of an idempotent semifield by 
\[
(n, x)^{\sim} \plus (m, y)^{\sim} = (n m, x^m \plus y^n)^{\sim}
\]
and 
\[
(n, x)^{\sim} \cdot (m, y)^{\sim} = (n m, x^m y^n)^{\sim}.  
\]
One can prove that these definitions do not depend on the choice of the representing elements of the equivalence classes; in this process commutativity of $\cis$ is used. 
We omit the details, since the proofs are very similar to the construction of local rings $S^{-1} R$ in classical commutative algebra. 
The absorbing element of $\widetilde{\cis}$ is $0 := (1, 0)^{\sim}$ and its identity element is $1 := (1, 1)^{\sim}$. 
The $n$th power of an element $(m, x)^{\sim}$ equals $(m, x^n)^{\sim}$, which implies that $\widetilde{\cis}$ is indeed radicable. 
The map $i_{\cis} : \cis \rightarrow \widetilde{\cis}, x \rightarrow (1, x)^{\sim}$ is an injective semifield morphism that embeds $\cis$ into $\widetilde{\cis}$. 
We summarize these results with the following theorem. 

\begin{theorem}\label{thm:radclo}
Every idempotent semifield $\cis$ can be algebraically embedded into a radicable idempotent semifield $\widetilde{\cis}$ that has the following universal property: 
\[
\begin{tikzcd}
\cis \arrow{r}{f} \arrow[swap]{d}{i_{\cis}} & N \\
\widetilde{\cis} \arrow[dashed]{ur}[swap]{\widetilde{f}} &
\end{tikzcd}
\]
in the sense that every semifield morphism $f : \cis \to N$, with $N$ radicable, uniquely extends to a semifield morphism $\widetilde{f} : \widetilde{\cis} \to N$ such that $f = \widetilde{f} \circ i_{\cis}$,  
where $i_{\cis} : \cis \rightarrow \widetilde{\cis}$ is the injective semifield morphism that embeds $\cis$ into $\widetilde{\cis}$. 
Moreover, $\cis$ is totally ordered if and only if $\widetilde{\cis}$ is totally ordered. 
\end{theorem}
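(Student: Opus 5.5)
The theorem has three claims: the semifield structure on $\widetilde{\cis}$ with the embedding $i_{\cis}$, the universal property, and preservation of total order in both directions. I take as given the construction sketched just before the statement: well-definedness of $\sim$ and of the operations, the semifield axioms, radicability, and injectivity of $i_{\cis}$. All of these follow from Lemma~\ref{prop:perfore} and commutativity. So I will concentrate on the universal property and the total order claim.

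\emph{Universal property.} Let $f : \cis \to N$ be a semifield morphism with $N$ radicable. I will define
\[
\widetilde{f}\bigl((n,x)^{\sim}\bigr) := f(x)^{1/n},
\]
using the unique $n$th root in $N$.

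First, well-definedness. If $x^m = y^n$, then $f(x)^m = f(y)^n$. Raising $f(x)^{1/n}$ and $f(y)^{1/m}$ to the power $nm$ gives $f(x)^m$ and $f(y)^n$ respectively, so they agree by uniqueness of roots (Lemma~\ref{prop:perfore} applied in $N$).

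Second, the morphism property. Multiplicativity is immediate, since $(f(x)^m f(y)^n)^{1/(nm)} = f(x)^{1/n} f(y)^{1/m}$; again one raises both sides to the power $nm$ and uses uniqueness of roots. Additivity uses the Frobenius identity (Lemma~\ref{lem:frobenius}) in $N$:
\[
\bigl(f(x)^{1/n} + f(y)^{1/m}\bigr)^{nm} = f(x)^m + f(y)^n = f(x^m + y^n),
\]
so $\widetilde{f}$ of the sum equals the sum of the images. The identities $\widetilde{f}(0)=0$, $\widetilde{f}(1)=1$ and $\widetilde{f}\circ i_{\cis} = f$ are clear.

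Third, uniqueness. Any extension $g$ must satisfy $g((n,x)^{\sim})^n = g((1,x)^{\sim}) = f(x)$, because $((n,x)^{\sim})^n = (n,x^n)^{\sim} = (1,x)^{\sim}$. Uniqueness of $n$th roots in $N$ then forces $g = \widetilde{f}$.

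\emph{Total order.} Since $i_{\cis}$ is an injective semifield morphism, it is an order embedding: $x \leqslant y$ means $x + y = y$, and this equation is preserved and reflected. Hence if $\widetilde{\cis}$ is totally ordered, so is $\cis$. Conversely, assume $\cis$ is totally ordered. Given two classes $(n,x)^{\sim}$ and $(m,y)^{\sim}$, rewrite them with the common index $nm$ as $(nm, x^m)^{\sim}$ and $(nm, y^n)^{\sim}$. Their sum is $(nm, x^m + y^n)^{\sim}$ up to equivalence: the computation gives $(n^2m^2, x^{m\cdot nm} + y^{n\cdot nm})^{\sim}$, which by Frobenius equals $(n^2m^2,(x^m+y^n)^{nm})^{\sim} = (nm, x^m+y^n)^{\sim}$. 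Since $x^m + y^n \in \{x^m, y^n\}$ by total order of $\cis$, the sum equals one of the two classes, so $\widetilde{\cis}$ is totally ordered.

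The main obstacle is the careful bookkeeping in the additivity and total order steps, namely checking that the representatives match under $\sim$. Each time, this reduces to the Frobenius identity together with uniqueness of roots (Lemma~\ref{prop:perfore}).
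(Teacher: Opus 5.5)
Your proposal is correct and builds on exactly the construction the paper gives before the statement. The paper omits all verifications and states the universal property and the total-order equivalence without argument. Your definition $\widetilde{f}((n,x)^{\sim}) := f(x)^{1/n}$ supplies the missing details in the natural way: well-definedness and additivity via Frobenius and uniqueness of roots (Lemma~\ref{prop:perfore}), and uniqueness of the extension via $((n,x)^{\sim})^n = i_{\cis}(x)$. Your total-order argument does the same; there the common-index rewriting is unnecessary, since the defining formula $(n,x)^{\sim} + (m,y)^{\sim} = (nm, x^m + y^n)^{\sim}$ already gives the sum directly.
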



\bibliographystyle{plain}

\def\cprime{$'$} \def\cprime{$'$} \def\cprime{$'$} \def\cprime{$'$}
  \def\ocirc#1{\ifmmode\setbox0=\hbox{$#1$}\dimen0=\ht0 \advance\dimen0
  by1pt\rlap{\hbox to\wd0{\hss\raise\dimen0
  \hbox{\hskip.2em$\scriptscriptstyle\circ$}\hss}}#1\else {\accent"17 #1}\fi}
  \def\ocirc#1{\ifmmode\setbox0=\hbox{$#1$}\dimen0=\ht0 \advance\dimen0
  by1pt\rlap{\hbox to\wd0{\hss\raise\dimen0
  \hbox{\hskip.2em$\scriptscriptstyle\circ$}\hss}}#1\else {\accent"17 #1}\fi}

\end{document}